\documentclass[11pt,a4paper]{article}

\usepackage{amsfonts,amsmath,amsthm,amssymb,mathtools}
\usepackage[margin=1.1in]{geometry}
\usepackage{enumitem}
\usepackage{mystyle}
\usepackage{hyperref}

\hypersetup{colorlinks=true,citecolor=blue,linkcolor=blue,urlcolor=blue}

\numberwithin{equation}{section}

\newtheorem{thm}{Theorem}[section]
\newtheorem{prop}[thm]{Proposition}
\newtheorem{cor}[thm]{Corollary}
\newtheorem{lem}[thm]{Lemma}
\newtheorem{rem}[thm]{Remark}
\theoremstyle{definition}
\newtheorem{defn}[thm]{Definition}
\newtheorem{assumption}[thm]{Assumption}

\newcommand{\abs}[1]{\left\lvert #1 \right\rvert}
\newcommand{\norm}[1]{\left\lVert #1 \right\rVert}
\newcommand{\loc}{\mathrm{loc}}
\newcommand{\Dcal}{\mathbb{D}}
\newcommand{\esssup}{\operatorname*{esssup}}
\newcommand{\Uc}{\mathcal{U}}
\newcommand{\Pc}{\mathcal{P}}
\newcommand{\T}{T}
\newcommand{\Lc}{\mathcal{L}}
\newcommand{\Sc}{\mathcal{S}}
\newcommand{\transpose}{\mathsf{T}}

\begin{document}

\title{Densities for scalar-valued BSDEs via unique continuation and backward uniqueness}

\author{
Solesne Bourguin\thanks{LAMA, Universit\'e Gustave Eiffel. Email: \texttt{solesne.bourguin@univ-eiffel.fr}.}
\and
Daniel Schwarz\thanks{Mathematics Department, University College London. Email: \texttt{d.schwarz@ucl.ac.uk}.}
}

\date{}

\maketitle

\begin{abstract}
We give sufficient conditions ensuring that, at every fixed positive time,
the scalar backward component of a Markovian forward-backward stochastic
differential equation with multidimensional forward process admits a density
with respect to Lebesgue measure. Existing density criteria for BSDEs often
obtain Malliavin non-degeneracy through sign or monotonicity assumptions. We
develop a different route for a scalar backward component with an
arbitrary-dimensional forward state. Under regularity assumptions and a
structural compatibility condition on the generator, the terminal condition is
only required to be non-constant. The key idea is to deduce Malliavin
non-degeneracy from deterministic rigidity of the critical set of the
decoupling field. We combine Malliavin calculus with unique continuation and
backward uniqueness for the associated semilinear parabolic equation. Unique
continuation precludes the spatial gradient of the decoupling field from
vanishing on a set of positive measure unless it vanishes identically on that
time slice, while backward uniqueness propagates such vanishing to the terminal
time. Along the way, we establish a unique continuation property from sets of
positive measure and a backward uniqueness result on the whole space for the
linear parabolic systems arising from differentiated semilinear equations.
\end{abstract}

\noindent\textbf{Key words and phrases.}
Backward stochastic differential equations; Malliavin calculus;
absolute continuity; Malliavin non-degeneracy; decoupling fields;
semilinear parabolic equations; unique continuation; backward uniqueness.

\medskip

\noindent\textbf{MSC2020 subject classifications.}
Primary 60H10, 60H07; Secondary 35B60, 35K58, 35K40.

\section{Introduction}
Since the pioneering work of Pardoux and Peng \cite{PP,PP2}, backward stochastic
differential equations have become a fundamental tool in stochastic
analysis. They arise naturally in stochastic control, in mathematical
finance, and in the probabilistic representation of nonlinear partial
differential equations.

\subsection{Problem and setting}
We work in the Markovian framework, where the solution of a
BSDE is linked to a semilinear parabolic equation through a nonlinear
Feynman--Kac formula, and this connection provides a natural bridge
between probabilistic and PDE methods, which plays a fundamental role in this paper. Specifically, we consider the Markovian forward-backward system
\begin{equation}\label{eq:intro_fbsde}
\left\{
\begin{aligned}
X_t &= x_0 + \int_0^t b(s,X_s)\Id s + \int_0^t \sigma(s,X_s)\Id W_s, \\
Y_t &= g(X_T) + \int_t^{T} f(s,X_s,Y_s,Z_s)\Id s - \int_t^{T} Z_s\Id W_s,
\end{aligned}
\right.
\qquad t\in [0,T],
\end{equation}
where $W$ is a $d$-dimensional Brownian motion and $Y$ is
real-valued. We are interested in the question of whether, for a fixed time $t$, the law of $Y_t$ is absolutely continuous with respect to the Lebesgue measure. Absolute continuity of this intermediate-time law is the basic regularity input whenever the distribution of $Y_t$ is used quantitatively -- for threshold events, quantiles, or calibration targets -- and Section~\ref{sec:applications} illustrates the result in two multidimensional models arising in incomplete-market valuation and in cardiac electrophysiology.

\subsection{Main result and proof strategy}
Our main result, Theorem~\ref{thm:main}, may be stated informally as follows: under the regularity and structural assumptions of Section~\ref{sec:framework} -- but with no sign or monotonicity conditions on the data -- if the terminal condition \(g\) is non-constant, then the law of \(Y_t\) is absolutely continuous with respect to the Lebesgue measure for every \(t\in\left(0,T\right)\), in arbitrary dimension of the forward state.

The structural compatibility condition that is specific to our mechanism is, in essence,
\begin{equation*}
\nabla_x f(t,x,y,0)=0,\qquad
\abs{\nabla_x f(t,x,y,z)-\nabla_x f(t,x,y,\zeta)}\le C\abs{z-\zeta}.
\end{equation*}
It is this condition that makes the differentiated equation for \(u=\nabla_xv\) homogeneous at the zero set of \(u\) and hence amenable to the rigidity argument. Remark~\ref{rem:struct_fx_necessary} explains why a restriction of this type cannot simply be dropped from the present mechanism.

For BSDEs, density questions are often approached through Malliavin calculus together with sign conditions on derivatives of the terminal condition and of the generator. See in particular Mastrolia, Possama\"i and R\'eveillac \cite{MPR,MPR2}, Aboura and Bourguin \cite{AB}, and Antonelli and Kohatsu-Higa \cite{AKH}. Density results are also available in multidimensional and coupled settings, as discussed below. Against this background, the distinction of the present work lies in the source of non-degeneracy. Our approach still uses Malliavin calculus to formulate the final non-degeneracy criterion, but proves that criterion through PDE rigidity of the decoupling field, namely unique continuation and backward uniqueness. More precisely, let $\left(v,w\right)$ be a Markovian decoupling field, so that $Y_t=v\left(t,X_t\right)$. Under Assumption~\ref{ass:rigidity}, Proposition~\ref{prop:u_bound} yields a global bound on $\nabla_xv$ on $[0,T)$, and Lemma~\ref{lem:malliavin_rep} below then gives the Malliavin differentiability of $Y_t$ together with
\begin{equation*}
D_rY_t
=
\nabla_x v(t,X_t)D_rX_t\mathbf 1_{[0,t]}(r).
\end{equation*}
The Bouleau--Hirsch criterion then establishes the existence of a density under the condition that
\begin{equation*}
\norm{DY_t}_{L^2([0,T])} > 0
\quad \text{a.s.}
\end{equation*}
Under the ellipticity assumptions on the forward diffusion,
this is equivalent to showing that $\nabla_x v\left(t,X_t\right)\neq 0$ almost
surely for every $t\in \left(0,T\right)$. The problem is thereby converted from a probabilistic one to a deterministic one: how large can the zero set of the spatial gradient of the decoupling field be? The key object is hence the vector-valued function
\begin{equation*}
u := \nabla_x v.
\end{equation*}
After differentiating the semilinear PDE satisfied by $v$, one finds
that $u$ solves a linear parabolic system. The structural assumption on the generator ensures that this system admits a differential inequality of the form
\begin{equation*}
\abs{\Pc u} \leq C\left( \abs{u}+\abs{\nabla_xu} \right),
\end{equation*}
where $\Pc$ is the relevant second-order parabolic operator. This is
the point at which PDE methods enter. The role of Malliavin calculus
is to reduce the density problem to a non-degeneracy condition on
$\nabla_x v$, while the role of the PDE analysis is to show that this gradient cannot vanish on any set of positive Lebesgue measure. The latter requires two rigidity properties: unique continuation and backward uniqueness. The first, unique continuation, is classically formulated under the assumption that the solution of a parabolic equation vanishes on an open set. This does not suffice in our setting: since the law of $X_t$ is absolutely continuous with respect to the Lebesgue measure, the relevant condition is that $\nabla_x v(t, \cdot)$ does not vanish on any set of positive measure. What is needed, therefore, is unique continuation from measurable sets of positive measure, not merely from open sets. This is established in Lemma~\ref{lem:uc}, where we prove that if for some $t_0\in \left(0,T\right)$ the set
\begin{equation*}
\{x\in \R^d \colon \nabla_x v(t_0,x)=0\}
\end{equation*}
has positive Lebesgue measure, then in fact $\nabla_x
v\left(t_0,\cdot\right)\equiv 0$ on $\R^d$. This extension from open sets to
positive-measure sets is one of the PDE contributions of the paper that
may be of independent interest.
\\

The next step is to establish a backward uniqueness result for the gradient $\nabla_x v\left(t_0,\cdot\right)$. Backward uniqueness propagates the vanishing of a function on a time slice of the parabolic cylinder up to the terminal time. The existing PDE literature provides such results on the whole space for scalar
parabolic equations satisfying differential inequalities of the form
$\abs{\Pc \phi}\leq C\left(\abs{\phi}+\abs{\nabla_x\phi}\right)$ under
suitable boundedness assumptions. See, for example, \cite{jWu2019}. However, in our setting, differentiating the semilinear PDE satisfied by
$v$ leads to a linear parabolic system for the vector-valued function $u=\nabla_x v$, so what is needed is a backward uniqueness statement for systems on $\R^d$. This is precisely what Lemma~\ref{lem:bu} provides: it extends the scalar whole-space result to the class of linear parabolic systems arising from the differentiated decoupling field of the BSDE. More concretely, Lemma~\ref{lem:bu} shows that if a vector-valued function $u$ vanishes on a single time slice and satisfies the corresponding differential inequality, then $u$ must vanish on the whole slab $\left[t_0,T\right)\times \R^d$. To apply this result to $u=\nabla_x v$, one must verify that the differentiated equation has the required form and that the boundedness assumptions of Lemma~\ref{lem:bu} are satisfied. These properties are provided by the positive-time regularity estimates established earlier, in particular Proposition~\ref{prop:extra_reg} and Corollary~\ref{cor:slab_bounds}.
\\

With these two rigidity results in hand, one can argue by
contradiction to show that the zero set of $\nabla_x v\left(t,\cdot\right)$
cannot have positive Lebesgue measure. Suppose that for some $t_0\in
\left(0,T\right)$ the set $\{x\in \R^d \colon \nabla_x v\left(t_0,x\right)=0\}$ has
positive Lebesgue measure. Lemma~\ref{lem:uc} then forces
$\nabla_x v\left(t_0,\cdot\right)\equiv 0$ on $\R^d$, and Lemma~\ref{lem:bu} propagates this vanishing to the slab $\left[t_0,T\right)\times \R^d$, so that $v\left(t,\cdot\right)$ is spatially constant for every $t\in \left[t_0,T\right)$. Passing to the terminal time shows that $g$ must be constant, contradicting the non-constancy of \(g\) assumed in Theorem~\ref{thm:main}. This yields Proposition~\ref{prop:mm_inv}, which states that for every $t\in \left(0,T\right)$ the set
\begin{equation*}
\{x\in \R^d : \nabla_x v(t,x)=0\}
\end{equation*}
has Lebesgue measure zero. Since $X_t$ admits a density, it follows
that $\nabla_x v\left(t,X_t\right)\neq 0$ almost surely. Returning to
the Malliavin representation of $DY_t$, this implies
$\norm{DY_t}_{L^2\left(\left[0,T\right]\right)}>0$ almost surely, which yields our main result Theorem~\ref{thm:main}.
\\

In this way, the proof of the density result emerges from a genuine interplay between probabilistic and analytic ideas. Malliavin calculus reduces the problem to the non-vanishing of the gradient of the BSDE decoupling field, while PDE analysis supplies the rigidity needed to rule out pathological degeneracies. The overall strategy is therefore to translate a probabilistic non-degeneracy question into a deterministic one for the associated parabolic equation, and then to carry the resulting information back into the stochastic framework via the Bouleau--Hirsch criterion. The conceptual novelty lies not in a new Malliavin criterion, but in a new way of verifying its non-degeneracy hypothesis through deterministic rigidity of the critical set of the decoupling field.

\subsection{Comparison with the existing literature}

The works \cite{AB,AKH,MPR} and the present paper address the same
broad question -- absolute continuity of marginals of Markovian BSDEs
-- but they do so in substantially different regimes and with fundamentally different mechanisms. In \cite{AB,AKH,MPR}, the forward diffusion is one-dimensional and the non-degeneracy analysis is performed directly at the Malliavin level, by proving the positivity or negativity of $D_rY_t$ or of second-order Malliavin derivatives under sign or convexity assumptions on derivatives of the terminal condition and of the generator. This point of view is extremely effective in dimension one and leads to sharp criteria for the existence of a density at a fixed time, and, in \cite{AB,MPR}, quantitative information on the density itself.
\\

Beyond the one-dimensional Markovian setting, Chertovskih and Shamarova \cite{CS} obtain Gaussian-type density bounds for components of multidimensional BSDEs through a positivity argument, while Likibi Pellat and Menoukeu Pamen \cite{LPMP} analyze density questions for coupled forward--backward systems with irregular forward drifts. These works address genuinely multidimensional density questions, but through mechanisms and hypotheses different from those used here. Against this background, the novelty of the present paper is not multidimensionality by itself. Rather, we introduce a rigidity-based route to Malliavin non-degeneracy for a scalar Markovian BSDE with an arbitrary-dimensional state: pointwise sign or positivity arguments are replaced by a deterministic analysis of the critical set of the decoupling field. Accordingly, the proof relies neither on a one-dimensional order structure nor on monotonicity or sign assumptions on the terminal datum and the generator of the kind that usually appear in existing density criteria for $Y_t$. Under the structural and regularity assumptions of our framework, the terminal datum is only required to be non-constant. To the best of our knowledge, this is the first density criterion in a genuinely multidimensional Markovian setting that derives non-degeneracy from this qualitative non-constancy condition through unique continuation and backward uniqueness.
\\

This multidimensional aspect is particularly important in
applications. In many models of current interest, the state variable records
several factors simultaneously --- multiple assets, stochastic
volatility coordinates, interest-rate factors, inventory or control
variables, hidden factors, or augmented states used to Markovianize
path-dependent claims. In such situations, there is typically no
natural reduction to a single scalar state variable that preserves both the Markov structure and the form of the generator. The
one-dimensional sign method does not extend naturally to this regime,
as it exploits scalar ordering properties of first- and
second-order derivatives that are not available in higher dimensions. By contrast, the PDE rigidity argument is formulated intrinsically in terms of the full vector field $\nabla_x v$ and therefore scales to arbitrary state dimension.
\\

It is equally important to delineate what the one-dimensional literature can do and what our theorem does not aim to cover. On the one hand, the criteria developed in \cite{AB,AKH,MPR} apply to classes of generators with a more general explicit dependence on the state variable than is allowed here. In our approach, this restriction originates from the PDE mechanism underlying the proof: after differentiating the semilinear equation satisfied by the decoupling field, one obtains a linear system for $u=\nabla_x v$ for which the rigidity argument described above is carried out. It is precisely here that our compatibility condition on $\nabla_x f$ enters. On the other hand, the hypotheses in \cite{AB,AKH,MPR} are tailored to one-dimensional sign tests. As a consequence, they can capture situations in which the existence of a density at a fixed time follows from a favourable local sign configuration, even when a global rigidity phenomenon may not hold. By contrast, our theorem is qualitative in nature and rests on a different mechanism: it replaces one-dimensional sign conditions with a multidimensional rigidity argument, at the price of a more restrictive structural assumption on the generator.
\\

Conversely, the scope of our method is not captured by the existing one-dimensional criteria. We impose no sign conditions on derivatives of the terminal datum or of the generator of the kind that typically underlie those results. The terminal condition is only required to be non-constant. Once the structural and regularity assumptions of our framework are satisfied, the existence of a density is established uniformly for every \(t\in \left(0,T\right)\), and in arbitrary state dimension. In particular, the theorem covers genuinely multidimensional Markovian models that are beyond the reach of one-dimensional density criteria, including basket and spread-type payoffs, multifactor stochastic-volatility models, factor-augmented semilinear pricing equations, equilibrium pricing in the spirit of Radner equilibria in incomplete markets \cite{SchwarzRadner}, and lifted Markovian formulations of path-dependent problems.
\\

The present result is therefore best viewed as complementary to the existing sign- and positivity-based approaches. Those methods can yield substantially finer information on the density under structural assumptions tailored to Malliavin positivity. Our contribution is different: a PDE rigidity principle turns the qualitative assumption that \(g\) is non-constant into a fixed-time density result for \(Y_t\), even when the forward state is genuinely multidimensional and no useful pointwise sign condition is available. In this sense, the paper provides a new route to non-degeneracy rather than a direct extension of the one-dimensional theory.

\subsection{Organization of the paper}

The paper is organized as follows. Section~\ref{sec:framework}
introduces the Markovian setting, states the assumptions, and
formulates the main theorem. Section~\ref{sec:regularity} establishes
the basic regularity of the decoupling field and then derives the
stronger coefficient-based regularity statements needed for the later
PDE analysis. Section~\ref{sec:rigidity} establishes the unique
continuation and backward uniqueness results and culminates in the
key rigidity proposition for the zero set of $\nabla_x
v$. Section~\ref{sec:main_proof} is dedicated to the probabilistic proof of the density theorem. Finally, Section~\ref{sec:applications} illustrates the scope of the result through two multidimensional examples, arising respectively from multifactor incomplete-market valuation with stochastic volatility, interest-rate, and credit factors, and from a viscous-eikonal model for cardiac activation mapping.

\section{Framework, assumptions, and main theorem}\label{sec:framework}
In this section, we introduce the Markovian forward-backward system studied throughout the paper, state the standing assumptions, and formulate the main density result. Since our proof combines stochastic arguments with PDE techniques, it is useful from the outset to separate the assumptions according to the role they play. Some hypotheses are needed at the level of the forward diffusion and of the Markovian representation of the BSDE, while others enter only later, when we study the differentiated equation for the decoupling field and invoke the PDE rigidity tools developed in the paper.

\subsection{The Markovian framework}
We work on a finite time horizon $\left[0,T\right]$, with $T>0$ fixed. For each
initial condition $\left(s,x\right)\in \left[0,T\right]\times \R^d$, we consider a forward
diffusion $X^{s,x}$ started from $x$ at time $s$, together with a
backward component $\left(Y^{s,x},Z^{s,x}\right)$ driven by the same Brownian
motion. The forward equation determines the state process, while the
BSDE encodes the quantity of interest whose marginal law at a fixed
time we want to study. More precisely, for $\left(s,x\right)\in \left[0,T\right]\times \R^d$, let $X^{s,x}$ be the solution of
\begin{equation}\label{eq:SDE}
X_t^{s,x} = x + \int_s^t b(r,X_r^{s,x})\Id r + \int_s^t \sigma(r,X_r^{s,x})\Id W_r,
\qquad t\in [s,T],
\end{equation}
and let $\left(Y^{s,x},Z^{s,x}\right)$ solve
\begin{equation}\label{eq:BSDE}
Y_t^{s,x} = g(X_T^{s,x}) + \int_t^{T} f(r,X_r^{s,x},Y_r^{s,x},Z_r^{s,x})\Id r - \int_t^{T} Z_r^{s,x}\Id W_r,
\qquad t\in [s,T].
\end{equation}
The Markovian setting means that the backward variables can be represented as deterministic functions of the current time and current state. This is the natural framework for our analysis, since it allows one to pass from the BSDE to the associated semilinear parabolic equation and to work with the corresponding decoupling field.

\begin{defn}\label{defn:markov_sol}
 A \emph{Markovian solution} of \eqref{eq:BSDE} is a pair of measurable functions
\begin{equation*}
v \colon [0,T]\times \R^d \to \R,
\qquad
w \colon [0,T]\times \R^d \to \R^d,
\end{equation*}
such that for every $\left(s,x\right)\in \left[0,T\right]\times \R^d$,
$Y_t^{s,x}=v\left(t,X_t^{s,x}\right)$ for all $t\in[s,T]$, while
\begin{equation*}
Z_t^{s,x}=w\left(t,X_t^{s,x}\right)
\qquad \text{for }\Id t\otimes\Id\mathbb P\text{-almost every }(t,\omega)\in[s,T]\times\Omega.
\end{equation*}
With this convention, \eqref{eq:BSDE} holds with
\begin{equation*}
\E\left(\int_s^T \abs{f(r,X_r^{s,x},Y_r^{s,x},Z_r^{s,x})}\Id r + \int_s^T \abs{Z_r^{s,x}}^2\Id r\right) < \infty.
\end{equation*}
\end{defn}
Throughout the paper we fix an initial point $x_0\in\R^d$ and, when no initial condition is displayed, write $X_t:=X_t^{0,x_0}$, $Y_t:=Y_t^{0,x_0}$, and $Z_t:=Z_t^{0,x_0}$. The density question addressed in this paper concerns the law of $Y_t=v\left(t,X_t\right)$ at fixed positive times. In particular, the entire argument will revolve around the regularity and non-degeneracy properties of the decoupling field $v$ and, more specifically, of its spatial gradient.

\subsection{Assumptions}
We now list the assumptions used in the sequel. Rather than presenting them as a single block, we separate them according to the stage of the argument in which they enter. This is important for the logic of the paper: the stochastic-flow analysis, the existence of a sufficiently regular Markovian solution, the positive-time regularity theory, and the PDE rigidity step do not all require the same hypotheses. The assumptions are nested: each includes those named in its preamble,
so that hypothesis lists in the sequel cite only the strongest
assumptions required. In particular, Assumption~\ref{ass:rigidity}
entails all of the others.

Throughout the paper we fix a H\"older exponent $\alpha\in\left(0,1\right)$.
All parabolic H\"older conditions below are understood with
respect to this common exponent, namely with exponent $\alpha/2$ in the time
variable and $\alpha$ in the spatial variables.

We begin with assumptions on the forward coefficients. These are the natural hypotheses ensuring that the forward diffusion is well posed and sufficiently regular with respect to the initial condition. They are the basic input for the stochastic side of the argument.

\begin{assumption}[Forward flow coefficients]\label{ass:flow}
The coefficients $b \colon \left[0,T\right]\times \R^d \to \R^d$ and
$\sigma \colon \left[0,T\right]\times \R^d \to \R^{d\times d}$ are
continuous in $t$ and continuously differentiable in $x$, with
$\nabla_x b$ and $\nabla_x \sigma$ bounded on
$\left[0,T\right]\times \R^d$.
\end{assumption}

\begin{rem}\label{rem:flow_consequences}
Assumption~\ref{ass:flow} already implies that $b$ and $\sigma$ are
globally Lipschitz in $x$, uniformly with respect to $t$, and of at
most linear growth: the Lipschitz bound follows from the boundedness
of the spatial gradients via the integral form of the mean value
theorem, and the growth bound then follows because
$t\mapsto \bigl(b(t,0),\sigma(t,0)\bigr)$ is continuous on the compact
interval $\left[0,T\right]$, hence bounded.
\end{rem}

\begin{lem}\label{lem:jacobian_flow}
Under Assumption~\ref{ass:flow}, for every $0\le s\le t\le T$ the map
$x\mapsto X^{s,x}_t$ is differentiable, and its Jacobian
$J_{s,t}:=\nabla_x X^{s,x}_t$ is a continuous, a.s.\ invertible
$\R^{d\times d}$-valued process solving
\begin{equation}\label{eq:jacobian_sde}
\Id J_{s,t}
= \nabla_x b(t,X^{s,x}_t)\,J_{s,t}\,\Id t
+ \sum_{\ell=1}^d \nabla_x \sigma_\ell(t,X^{s,x}_t)\,J_{s,t}\,\Id W_t^\ell,
\qquad J_{s,s}=I_d,
\end{equation}
where $\sigma_\ell$ is the $\ell$-th column of $\sigma$ and $I_d$ the identity
matrix. Moreover,
\begin{equation}\label{eq:flow_moments}
\E\left( \sup_{t\in [s,T]}\bigl( \norm{J_{s,t}}^q + \norm{J_{s,t}^{-1}}^q \bigr) \right)
< \infty
\qquad \text{for every } q\ge 1.
\end{equation}
\end{lem}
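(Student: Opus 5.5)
The plan is to treat Lemma~\ref{lem:jacobian_flow} as an instance of the classical theory of differentiable stochastic flows, as in Kunita or Protter, with every ingredient driven by Assumption~\ref{ass:flow}.

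\textbf{Step 1: well-posedness and the candidate derivative.} By Remark~\ref{rem:flow_consequences}, $b$ and $\sigma$ are globally Lipschitz in $x$ and of linear growth. Hence \eqref{eq:SDE} has a unique strong solution with moments of all orders. We then write down the linear SDE \eqref{eq:jacobian_sde}. Its coefficients $\nabla_x b(t,X_t^{s,x})$ and $\nabla_x\sigma_\ell(t,X_t^{s,x})$ are bounded progressively measurable matrix processes. So \eqref{eq:jacobian_sde} has a unique continuous solution $J_{s,t}$, and Burkholder--Davis--Gundy plus Gronwall give $\E\sup_t\norm{J_{s,t}}^q<\infty$ for every $q\ge1$.

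\textbf{Step 2: differentiability of $x\mapsto X^{s,x}_t$.} We form the difference quotients $\Delta^h_{e}X_t := h^{-1}(X_t^{s,x+he}-X_t^{s,x})$. Using the integral mean value theorem, each quotient solves a linear SDE whose coefficients are
\[
\int_0^1\nabla_x b\bigl(t,X_t^{s,x}+\theta(X_t^{s,x+he}-X_t^{s,x})\bigr)\Id\theta,
\]
and the analogous expression for $\sigma_\ell$. These coefficients are uniformly bounded. We first show $\E\sup_t\abs{X^{s,x+he}_t-X^{s,x}_t}^q\le C\abs{h}^q$. Continuity of $\nabla_x b$ and $\nabla_x\sigma$ together with dominated convergence then makes these coefficients converge in probability to those of \eqref{eq:jacobian_sde}. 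A Gronwall/BDG stability estimate for linear SDEs gives $\Delta^h_eX\to J_{s,\cdot}e$ in $L^q(\Omega;C[s,T])$. To obtain an a.s.\ differentiable (indeed $C^1$) version of the map $x\mapsto X^{s,x}_t$, we apply Kolmogorov's continuity criterion to the random fields $(x,h)\mapsto\Delta^h_eX$ on $\R^d\times(\R\setminus\{0\})$. This requires Hölder-type moment estimates, which is exactly where only mere continuity of $\nabla b,\nabla\sigma$ is delicate. I expect this to be the main obstacle: to get a modulus in $x$ for $J$ one must control increments of the coefficients $\nabla_x b(t,X^{s,x})-\nabla_x b(t,X^{s,x'})$ without a Hölder assumption. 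My first attempt is to cite Kunita's theorem (Theorem 4.6.5 in his book) on $C^1$ flows for $C^1$ coefficients with bounded derivatives. If a self-contained route is wanted, I would settle for differentiability in $L^q$ and a.s.\ along sequences, which suffices for the later uses.

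\textbf{Step 3: invertibility and inverse moments.} We define $K_{s,t}$ as the solution of the linear SDE
\[
\Id K_{s,t}=K_{s,t}\Bigl(-\nabla_x b+\sum_\ell(\nabla_x\sigma_\ell)^2\Bigr)\Id t-\sum_\ell K_{s,t}\nabla_x\sigma_\ell\,\Id W^\ell_t,\qquad K_{s,s}=I_d,
\]
with all coefficients evaluated at $(t,X^{s,x}_t)$. Itô's product rule shows that $\Id(K_{s,t}J_{s,t})=0$: the drift cross terms from the quadratic covariation cancel against the added $(\nabla\sigma_\ell)^2$ term. Hence $K_{s,t}J_{s,t}=I_d$ for all $t$ a.s., so $J_{s,t}$ is invertible with $J_{s,t}^{-1}=K_{s,t}$. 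Since $K$ solves a linear SDE with bounded coefficients, the same BDG/Gronwall argument as in Step~1 yields its moment bound. Combining the bounds for $J$ and $K$ gives \eqref{eq:flow_moments}.
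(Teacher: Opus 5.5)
Your proposal is correct and follows essentially the same route as the paper. Differentiability of the flow is taken from the classical theory; the paper simply cites Krylov, \S2.8, Theorem~6, with $i=1$. Invertibility and the inverse moments then come from exactly the same explicit linear SDE for $K_{s,t}$, with the It\^o correction $\sum_\ell (\nabla_x\sigma_\ell)^2$, the product rule giving $\Id(K_{s,t}J_{s,t})=0$, and BDG/Gronwall for the moments of $K_{s,t}$.

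One caution about your Step~2: Kunita's $C^1$-flow theorem is stated for coefficients with H\"older continuous first derivatives, which Assumption~\ref{ass:flow} does not provide. Your fallback, $L^q$-differentiability via difference quotients, is therefore the version that matches the stated hypotheses, and it is also the form the later arguments actually use (e.g.\ \eqref{eq:diffquot_X}).
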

 
\begin{proof}
By Assumption \ref{ass:flow} the coefficients $b$ and $\sigma$ are continuously
differentiable in $x$ with bounded derivatives, and by Remark \ref{rem:flow_consequences} of at most
linear growth. The hypotheses of \cite[\S2.8, Theorem~6]{Krylov} therefore
hold with $i=1$, which gives the differentiability of $x\mapsto X^{s,x}_t$
together with $\E\left(\sup_{t\in [s,T]}\norm{J_{s,t}}^q\right)<\infty$ for every
$q\ge 1$. Differentiating \eqref{eq:SDE} in $x$ and interchanging the
derivative with the integrals yields \eqref{eq:jacobian_sde}.
 
To prove invertibility, let $K_{s,t}$ be the unique
  strong solution of the matrix-valued linear SDE
  \begin{equation}\label{eq:inverse_jacobian_sde}
\left\{
\begin{aligned}
\Id K_{s,t}
&= -K_{s,t}\,\nabla_x b(t,X^{s,x}_t)\,\Id t \\
&\quad - \sum_{\ell=1}^d K_{s,t}\,\nabla_x \sigma_\ell(t,X^{s,x}_t)\,\Id W_t^\ell \\
&\quad + \sum_{\ell=1}^d K_{s,t}
\bigl(\nabla_x\sigma_\ell(t,X^{s,x}_t)\bigr)^2\,\Id t, \\
K_{s,s}&=I_d.
\end{aligned}
\right.
\end{equation}
The coefficients in \eqref{eq:inverse_jacobian_sde} are bounded by
Assumption~\ref{ass:flow}. Standard Burkholder--Davis--Gundy and
Gronwall estimates for linear SDEs give
\begin{equation}\label{eq:K_moments}
\E\left(\sup_{t\in[s,T]}\norm{K_{s,t}}^q\right)<\infty
\qquad\text{for every }q\ge1.
\end{equation}
Applying It\^o's product rule to $K_{s,t}J_{s,t}$ and using
\eqref{eq:jacobian_sde} and \eqref{eq:inverse_jacobian_sde}, the drift
terms and the stochastic terms cancel identically, including the quadratic
covariation term, and therefore
\begin{equation*}
\Id\bigl(K_{s,t}J_{s,t}\bigr)=0,
\qquad
K_{s,s}J_{s,s}=I_d.
\end{equation*}
Thus $K_{s,t}J_{s,t}=I_d$ for every $t\in[s,T]$, almost surely. Since
$J_{s,t}$ and $K_{s,t}$ are square matrices, the existence of this left
inverse implies that $J_{s,t}$ is invertible and
$K_{s,t}=J_{s,t}^{-1}$. Combining this identity with
\eqref{eq:K_moments} gives the inverse-moment bound in
\eqref{eq:flow_moments}.
\end{proof}

The next set of assumptions strengthens the hypotheses on the forward
coefficients in a direction dictated by the PDE part of the
proof. These hypotheses are not needed merely to define the forward
process. Rather, they are the ones that allow us to use classical
parabolic regularity, to obtain absolute continuity of the law of the
forward diffusion at positive times, and later to apply the unique
continuation and backward uniqueness arguments to the differentiated
equation -- see Remark \ref{rem:forward_roles} below.

\begin{assumption}[Forward parabolic coefficients]\label{ass:parabolic}
In addition to Assumption \ref{ass:flow}, assume that the following hold.
\begin{enumerate}
\renewcommand{\labelenumi}{\textup{(P\arabic{enumi})}}
\renewcommand{\theenumi}{P\arabic{enumi}}
\item\label{it:P1} The coefficients $b$ and $\sigma$ are bounded on
  $\left[0,T\right]\times \R^d$.
\item\label{it:P2} The coefficients are regular in time, uniformly in
  space: there exists $C>0$ such that, for all $s,t\in\left[0,T\right]$
  and all $x\in\R^d$,
  \begin{equation*}
  \abs{b(t,x)-b(s,x)} \le C\,\abs{t-s}^{\alpha/2}
  \qquad\text{and}\qquad
  \norm{\sigma(t,x)-\sigma(s,x)} \le C\,\abs{t-s}.
  \end{equation*}
\item\label{it:P3} The matrix $\sigma$ is uniformly elliptic: there exists
  $\lambda>0$ such that
  \begin{equation*}
  \xi^\transpose (\sigma\sigma^\transpose)(t,x)\xi \geq \lambda \abs{\xi}^2,
  \qquad \xi\in \R^d,
  \quad (t,x)\in [0,T]\times \R^d.
  \end{equation*}
\end{enumerate}
\end{assumption}

\begin{rem}\label{rem:forward_roles}
The distinction between Assumptions \ref{ass:flow} and
\ref{ass:parabolic} will be used repeatedly below. Assumption
\ref{ass:flow} is the stochastic-flow hypothesis: it yields
well-posedness of the forward SDE, moment bounds, differentiability of
$X^{s,x}$ with respect to the initial condition, and Malliavin
differentiability of the forward process. Assumption
\ref{ass:parabolic}, by contrast, is tailored to the analytic part of
the proof: it provides classical local regularity for the decoupling
field, Aronson-type absolute continuity of the law of $X_t$ at positive
times, and places the differentiated equation within the scope of the
unique continuation and backward uniqueness results of
Section~\ref{sec:rigidity}.

For later reference, we record the form in which Assumption
\ref{ass:parabolic} enters these arguments. Define the diffusion matrix
\begin{equation}\label{eq:A_def}
A := \tfrac12\,\sigma\sigma^{\transpose},
\end{equation}
a notation in force throughout the paper. Under Assumption
\ref{ass:parabolic}, the matrix $A$ is bounded, with the two-sided
ellipticity bounds
\begin{equation*}
\tfrac{\lambda}{2}\,\abs{\xi}^{2}
\;\le\;
\xi^{\transpose}A(t,x)\,\xi
\;\le\;
\tfrac12\,\norm{\sigma}_{\infty}^{2}\,\abs{\xi}^{2},
\qquad
\xi\in\R^{d},\quad (t,x)\in\left[0,T\right]\times\R^{d}.
\end{equation*}
The gradient $\nabla_{x}A$ is bounded, by \ref{it:P1} and the
boundedness of $\nabla_{x}\sigma$ in Assumption~\ref{ass:flow}, so
that $A$ is globally Lipschitz in $x$, uniformly in $t$. Combined
with the Lipschitz bound in $t$, uniformly in $x$, supplied by
\ref{it:P1} and \ref{it:P2}, it follows that $A$ is globally
Lipschitz in $(t,x)$ on $\left[0,T\right]\times\R^{d}$. In particular, $A$ is uniformly continuous and belongs to
$C^{\alpha/2,\alpha}$ on bounded cylinders. Moreover, since
$t\mapsto A(t,x)$ is Lipschitz uniformly in $x$, the derivative
$\partial_{t}A(\cdot,x)$ exists for almost every $t\in\left[0,T\right]$,
with $\abs{\partial_{t}A}$ bounded almost everywhere by the Lipschitz
constant. This is the form in which the time regularity of $A$ is used
in the backward uniqueness argument of Section~\ref{sec:rigidity},
where $\partial_{t}A$ enters only through integral identities and
almost-everywhere bounds.
\end{rem}

We next turn to the backward equation itself. In the following we collect the assumptions on the terminal datum, on the generator, and on the Markovian solution. In particular, the structural condition on $\nabla_x f$ is the one that will later allow the differentiated decoupling field to satisfy a linear parabolic system amenable to the rigidity argument.

\begin{assumption}[Backward coefficients and Markovian solution]\label{ass:backward}
The BSDE \eqref{eq:BSDE} admits a Markovian solution. We fix one such pair
$(v,w)$ for the remainder of the paper and define
\begin{equation*}
C_Y := \sup_{[0,T]\times\R^d}\abs{v}\in[0,+\infty],
\qquad
C_Z := \sup_{[0,T]\times\R^d}\abs{w}\in[0,+\infty],
\end{equation*}
together with the associated solution strip
\begin{equation*}
\Sc := [0,T]\times\R^d\times\{y\in\R:\abs{y}\le C_Y\}
       \times\{z\in\R^d:\abs{z}\le C_Z\},
\end{equation*}
with the convention that the restriction in $y$ (resp.\ $z$) is dropped when
$C_Y=+\infty$ (resp.\ $C_Z=+\infty$). Assume, moreover, that there exist a
constant $K_f>0$ and, for every $R>0$, a constant $K_{f,R}>0$, such that the
following hold.
\begin{enumerate}
\renewcommand{\labelenumi}{\textup{(B\arabic{enumi})}}
\renewcommand{\theenumi}{B\arabic{enumi}}
\item\label{it:B1} The terminal condition $g\colon\R^d\to\R$ is continuously
differentiable and of at most polynomial growth.
\item\label{it:B2} The generator
$f\colon[0,T]\times\R^d\times\R\times\R^d\to\R$ is continuously
differentiable in $(x,y,z)$. On $\Sc$, it is H\"older continuous in $t$ and
$x$, locally uniformly in $(y,z)$: for all $t,s\in[0,T]$, $x,\xi\in\R^d$, and
all $(y,z)$ with $(t,x,y,z)\in\Sc$ and $\abs{y}+\abs{z}\le R$,
\begin{equation}
\abs{f(t,x,y,z)-f(s,\xi,y,z)}
\le K_{f,R}\left(\abs{t-s}^{\alpha/2}+\abs{x-\xi}^{\alpha}\right).
\label{eq:holder_f_tx}
\end{equation}
Moreover, on $\Sc$,
\begin{equation}
\abs{\partial_y f(t,x,y,z)}+\abs{\nabla_z f(t,x,y,z)}\le K_f.
\label{eq:lip_fyz}
\end{equation}
\item\label{it:B3} The map $\nabla_x f$ satisfies, on $\Sc$,
\begin{align}
\nabla_x f(t,x,y,0)&=0,\label{eq:struct_fx_zero}\\
\abs{\nabla_x f(t,x,y,z)-\nabla_x f(t,x,y,\zeta)}
&\le K_f\abs{z-\zeta}.\label{eq:struct_fx_lip}
\end{align}
\end{enumerate}
\end{assumption}

\begin{rem}\label{rem:CYCZ}
The quantities \(C_Y\) and \(C_Z\) are determined by the Markovian solution
itself and are not fixed in advance. They are the possibly infinite supremum
norms of the decoupling fields \(v\) and \(w\). By
Definition~\ref{defn:markov_sol},
\begin{equation*}
\abs{Y_t^{s,x}}\le C_Y
\qquad\text{for every }t\in[s,T],\quad\text{a.s.},
\end{equation*}
while
\begin{equation*}
\abs{Z_t^{s,x}}\le C_Z
\qquad\text{for }\Id t\otimes\Id\mathbb P\text{-almost every }(t,\omega)\in[s,T]\times\Omega.
\end{equation*}
Thus \(C_Y\) and \(C_Z\) localize the assumptions on the generator to the
region of \((y,z)\)-space visited by the solution, with the natural
\(\Id t\otimes\Id\mathbb P\)-almost-everywhere interpretation for the
\(Z\)-component. When both constants are finite, \ref{it:B2} and
\ref{it:B3} require control of the coefficients only on the strip \(\Sc\).
This localization is used through the truncation maps \(\rho_Y\) and
\(\rho_Z\) introduced in Step~1 of the proof of
Proposition~\ref{prop:reg}. If \(C_Y\) or \(C_Z\) is infinite, the
corresponding restriction disappears and the relevant bounds become global in
that variable.

The existence of a Markovian solution is itself a hypothesis, but sufficient
conditions are classical: if the conditions of \ref{it:B2} and \ref{it:B3}
hold with the strip \(\Sc\) replaced by all of
\([0,T]\times\R^d\times\R\times\R^d\), and \(g\) is continuous and of at most
polynomial growth, then the generator is globally Lipschitz in \((y,z)\) and
of linear growth, and the nonlinear Feynman--Kac construction provides a pair
\((v,w)\) as in Definition~\ref{defn:markov_sol} -- see \cite{PP2} and
\cite[Section~4]{KPQ}.
\end{rem}

The next two sets of assumptions concern the PDE analysis of the differentiated decoupling field. They are separated because they play different roles. Assumption \ref{ass:extra_reg} is used to obtain additional regularity for $\nabla_x v$ and for the coefficients of the linearized equation. Assumption \ref{ass:rigidity} is stronger and is the point at which we impose the uniform bounds needed to implement the rigidity argument at positive times.
\\

We begin with the local regularity assumptions. These hypotheses are local
in nature and enter the argument at exactly one point: they justify
differentiating the semilinear equation for the decoupling field, by
ensuring that the coefficients and the source of the resulting linear
system are locally H\"older continuous, as required by the interior
Schauder theory (see the proof of Proposition~\ref{prop:extra_reg}).

\begin{assumption}[Local regularity of the linearized coefficients]\label{ass:extra_reg}
In addition to Assumptions \ref{ass:flow} and \ref{ass:backward}, assume
that the following hold.
\begin{enumerate}
\renewcommand{\labelenumi}{\textup{(R\arabic{enumi})}}
\renewcommand{\theenumi}{R\arabic{enumi}}
\item\label{it:R1} The maps $\nabla_x b$ and $\nabla_x \sigma$ belong to
$C^{\alpha/2,\alpha}_{\loc}\left(\left[0,T\right]\times \R^d\right)$.
\item\label{it:R2} For every compact set $K\subset \Sc$ there exists a
constant $C_K>0$ such that, for all
$\left(t,x,y,z\right),\left(s,\xi,\eta,\zeta\right)\in K$,
\begin{equation}\label{eq:holder_R2}
\begin{aligned}
&\abs{\nabla_x f(t,x,y,z)-\nabla_x f(s,\xi,\eta,\zeta)}
+\abs{\partial_y f(t,x,y,z)-\partial_y f(s,\xi,\eta,\zeta)}
\\
&\qquad
+\abs{\nabla_z f(t,x,y,z)-\nabla_z f(s,\xi,\eta,\zeta)}
\\
&\qquad\le C_K\left(
\abs{t-s}^{\alpha/2}+\abs{x-\xi}^{\alpha}
+\abs{y-\eta}^{\alpha}+\abs{z-\zeta}^{\alpha}
\right).
\end{aligned}
\end{equation}
\end{enumerate}
\end{assumption}

The final set of assumptions supplies the boundedness input for the
backward uniqueness argument of Section~\ref{sec:rigidity}. By
Definition~\ref{defn:markov_sol} and Remark~\ref{rem:CYCZ}, the Markovian
solution already remains in the strip \(\Sc\), pointwise in time for the
\(Y\)-component and \(\Id t\otimes\Id\mathbb P\)-almost everywhere for
the \(Z\)-component. Assumption~\ref{ass:backward} therefore controls the
generator along the solution. The stronger assumptions below extend the
relevant bounds to the whole space. The boundedness of \(\nabla_x g\) in
\ref{it:U1} makes the global gradient estimate finite, while
\ref{it:U2} and \ref{it:U3} strengthen \eqref{eq:lip_fyz} and
\eqref{eq:struct_fx_zero}--\eqref{eq:struct_fx_lip}. These global bounds
allow us to differentiate the BSDE with respect to the initial state by a
direct difference-quotient argument and to obtain the \(L^q\) estimates
used in Propositions~\ref{prop:u_bound} and \ref{prop:grad_slab_bound}.
They also avoid the need to invoke a parameter-differentiability theorem
with stronger uniformity assumptions in the spatial variable.

\begin{assumption}[Global bounds for the linearized equation]\label{ass:rigidity}
In addition to Assumptions \ref{ass:parabolic}, \ref{ass:backward}, and
\ref{ass:extra_reg}, assume that there exists a constant \(C_f>0\) such
that the following hold.
\begin{enumerate}
\renewcommand{\labelenumi}{\textup{(U\arabic{enumi})}}
\renewcommand{\theenumi}{U\arabic{enumi}}
\item\label{it:U1} The terminal gradient is bounded:
\(\nabla_x g\in C_b\left(\R^d;\R^d\right)\).
\item\label{it:U2} The bound \eqref{eq:lip_fyz} holds on all of
\([0,T]\times\R^d\times\R\times\R^d\), with \(C_f\) in place of \(K_f\):
\begin{equation*}
\abs{\partial_y f(t,x,y,z)}+\abs{\nabla_z f(t,x,y,z)}\le C_f,
\qquad
(t,x,y,z)\in[0,T]\times\R^d\times\R\times\R^d.
\end{equation*}
Moreover, \(\partial_y f\) and \(\nabla_z f\) are Lipschitz continuous
in \((y,z)\), uniformly in \((t,x)\): for all
\((t,x)\in[0,T]\times\R^d\), \(y,\eta\in\R\), and \(z,\zeta\in\R^d\),
\begin{multline*}
\abs{\partial_y f(t,x,y,z)-\partial_y f(t,x,\eta,\zeta)}
+\abs{\nabla_z f(t,x,y,z)-\nabla_z f(t,x,\eta,\zeta)}
\\
\le C_f\left(\abs{y-\eta}+\abs{z-\zeta}\right).
\end{multline*}
\item\label{it:U3} The structural conditions
\eqref{eq:struct_fx_zero}--\eqref{eq:struct_fx_lip} hold on all of
\([0,T]\times\R^d\times\R\times\R^d\), with \(C_f\) in place of \(K_f\):
for all \((t,x,y)\in[0,T]\times\R^d\times\R\) and \(z,\zeta\in\R^d\),
\begin{equation*}
\nabla_x f(t,x,y,0)=0,
\qquad
\abs{\nabla_x f(t,x,y,z)-\nabla_x f(t,x,y,\zeta)}
\le C_f\abs{z-\zeta}.
\end{equation*}
\end{enumerate}
\end{assumption}

\begin{rem}[On the \(x\)-dependence of the generator]\label{rem:struct_fx_necessary}
The structural condition imposed on \(\nabla_x f\) is dictated by the PDE mechanism underlying the proof. The decoupling field \(v\) solves a semilinear parabolic equation of the form
\begin{equation*}
\partial_t v + \Lc v + f\left(t,x,v,(\nabla_x v)\sigma\right)=0,
\qquad
\Lc:=\sum_{i,j=1}^{d}A_{ij}\,\partial_{ij}+\sum_{i=1}^{d}b_{i}\,\partial_{i},
\end{equation*}
where \(\Lc\) is the generator of the forward diffusion, cf.~\eqref{eq:A_def}. Differentiating in space and setting $u=\nabla_x v$ produces a linear parabolic system for \(u\) with the external forcing term
\begin{equation*}
\nabla_x f\left(t,x,v,\left(\nabla_x v\right)\sigma\right).
\end{equation*}
For the unique continuation and backward uniqueness argument to apply, this forcing must vanish whenever \(u\) does. This is precisely the role of the condition
\begin{equation*}
\nabla_x f(t,x,y,0)=0.
\end{equation*}
Combined with the Lipschitz control of \(\nabla_x f\) in the \(z\)-variable, it yields
\begin{equation*}
\abs{\nabla_x f\left(t,x,v,(\nabla_x v)\sigma\right)}\leq C\abs{u},
\end{equation*}
and thereby reduces the differentiated equation to a differential inequality of the form
\begin{equation*}
\abs{\Pc u}\leq C\left(\abs{u}+\abs{\nabla_xu}\right),
\end{equation*}
which is exactly the homogeneous structure required by the unique continuation and backward uniqueness results proved later.
\\

The assumption should therefore be understood as a compatibility condition between the explicit \(x\)-dependence of the generator and the geometry of the critical set of the decoupling field. If the forcing term does not vanish on the set \(\{u=0\}\), the zero set of \(u=\nabla_x v\) is no longer rigid: the inhomogeneous \(x\)-dependence of the generator can cause \(u\) to vanish at intermediate times independently of the terminal condition. From the probabilistic viewpoint, this is precisely the obstruction, since the Malliavin derivative of \(Y_t\) factorizes through \(\nabla_x v\left(t,X_t\right)\), and the argument relies on showing that this quantity cannot vanish on a set of positive measure unless it vanishes identically.
\\

That said, it should not be interpreted as a necessary condition for the existence of a density in general. Rather, it is necessary for any approach that relies on rigidity of the zero set of the gradient of the decoupling field. Other approaches, notably the one-dimensional sign criteria of \cite{AB,AKH,MPR}, can accommodate generators with more general explicit state dependence, though those methods are themselves restricted to scalar state processes and rely on order-theoretic tools that are unavailable in higher dimensions.
\\

The relevance of the structural condition can already be seen on a simple example, which shows that without such a compatibility requirement the conclusion of our theorem may fail altogether. Consider the one-dimensional case \(d=1\), with \(b\equiv 0\) and \(\sigma\equiv 1\). Let \(\phi\) be a smooth function such that $\phi\left(T\right)=1$ and $\phi\left(t_0\right)=0$ for some \(t_0\in \left(0,T\right)\), and set $g\left(x\right)=\sin x$ and
\begin{equation*}
f(t,x,y,z)=\left(\tfrac12 \phi(t)-\phi'(t)\right)\sin x.
\end{equation*}
Then $v\left(t,x\right)=\phi\left(t\right)\sin x$ solves the semilinear PDE associated with the BSDE, with terminal condition \(g\). In particular, \(g\) is clearly non-constant, so the terminal datum carries genuine spatial variation. However,
\begin{equation*}
u(t,x)=\nabla_x v(t,x)=\phi(t)\cos x,
\end{equation*}
and therefore $u\left(t_0,\cdot\right)\equiv 0$. Although the terminal condition is non-constant, the gradient \(u(t_0,\cdot)\) vanishes identically, so that \(v(t_0,\cdot)\) is constant on \(\R\). Consequently, \(Y_{t_0}\) is deterministic and cannot admit a density.
\\

This is exactly the phenomenon that our structural condition is designed to exclude. In the differentiated equation for \(u=\nabla_x v\), the explicit \(x\)-dependence of the generator produces a forcing term that does not vanish when \(u=0\). As a result, the zero set of \(u\) is no longer rigid: even if \(u\) vanishes identically at some time, this does not force the terminal datum to be constant. The example therefore shows that the issue is not merely that the proof would fail without the condition \(\nabla_x f\left(t,x,y,0\right)=0\), but that without some restriction of this type the fixed-time density statement itself is false in general.
\end{rem}

\subsection{Main theorem and remarks}

We are now in a position to state the main result of the paper. The
purpose of the preceding assumptions is to place the Markovian BSDE in
a framework where both sides of the argument can be carried out: on
the one hand, the Malliavin differentiability and Markovian
representation needed to formulate the non-degeneracy question for
\(Y_t\), and on the other hand, the regularity and rigidity properties
of the decoupling field required by the PDE part of the proof. The
theorem below is the final outcome of this interplay. Its proof proceeds in two main stages. First, in Section~\ref{sec:regularity}, we establish the regularity properties of the decoupling field and of its spatial gradient that are needed to differentiate the associated semilinear equation and to control the resulting coefficients. Second, in Section~\ref{sec:rigidity}, we prove the rigidity statements for the differentiated equation, namely the unique continuation and backward uniqueness results that ultimately prevent the spatial gradient of the decoupling field from vanishing on a set of positive measure. Once these ingredients are available, the density statement follows in Section~\ref{sec:main_proof} by combining them with the Bouleau--Hirsch criterion.

\begin{thm}\label{thm:main}
Assume \ref{ass:rigidity}, and suppose that the terminal condition \(g\) is
non-constant. Then, for every \(t\in(0,T)\), the law of \(Y_t^{0,x_0}\) is
absolutely continuous with respect to the Lebesgue measure on \(\R\).
\end{thm}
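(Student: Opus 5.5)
The proof follows the route announced in the introduction. The probabilistic input is the Bouleau--Hirsch criterion; the deterministic input is the rigidity statement Proposition~\ref{prop:mm_inv}.

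\textbf{Step 1: Malliavin differentiability of $Y_t$.} Fix $t\in(0,T)$. Under Assumption~\ref{ass:rigidity}, Proposition~\ref{prop:u_bound} gives a bound on $\nabla_x v$ on $[0,t]\times\R^d$ that is uniform in $x$. Together with the $C^1$ regularity of $v(t,\cdot)$ from Section~\ref{sec:regularity}, this lets us apply the chain rule to $Y_t=v(t,X_t)$. Here $X_t\in\Dcal^{1,2}$ by Assumption~\ref{ass:flow}. Lemma~\ref{lem:malliavin_rep} then gives $Y_t\in\Dcal^{1,2}$ and
\begin{equation*}
D_rY_t=\nabla_x v(t,X_t)\,D_rX_t\,\mathbf 1_{[0,t]}(r),
\qquad
D_rX_t=J_{0,t}J_{0,r}^{-1}\sigma(r,X_r),
\end{equation*}
where the second identity is the standard representation for $r\le t$, with $J$ from Lemma~\ref{lem:jacobian_flow}. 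By Bouleau--Hirsch, it suffices to prove $\norm{DY_t}_{L^2([0,T])}>0$ almost surely.

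\textbf{Step 2: reduction to the zero set of $\nabla_x v(t,\cdot)$.} Set $\xi:=\nabla_x v(t,X_t)$. The factor $J_{0,t}J_{0,r}^{-1}$ is a.s.\ invertible and continuous in $r$, and $\sigma(r,X_r)$ is invertible by \ref{it:P3}. Hence on the event $\{\xi\neq0\}$ the map $r\mapsto \xi^\transpose J_{0,t}J_{0,r}^{-1}\sigma(r,X_r)$ is continuous and nonzero for every $r\le t$. It is therefore nonzero on $[0,t]$, so its $L^2([0,t])$-norm is positive. It remains to show $\mathbb P(\xi=0)=0$.

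\textbf{Step 3: absolute continuity of $X_t$ and rigidity.} Under Assumption~\ref{ass:parabolic}, the law of $X_t$ for $t>0$ has a density (Aronson-type Gaussian bounds; see Remark~\ref{rem:forward_roles}). We then invoke Proposition~\ref{prop:mm_inv}, which uses that $g$ is non-constant. It says that $N_t:=\{x:\nabla_x v(t,x)=0\}$ is Lebesgue-null. Its proof argues by contradiction: if $N_t$ had positive measure, Lemma~\ref{lem:uc} would give $\nabla_x v(t,\cdot)\equiv0$. Lemma~\ref{lem:bu}, applied with the slab bounds of Corollary~\ref{cor:slab_bounds}, would extend this to $[t,T)\times\R^d$. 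Continuity of $v$ up to $T$ would then make $g$ constant, a contradiction. Consequently $\mathbb P(X_t\in N_t)=0$, and combining with Step~2 completes the proof.

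\textbf{Main obstacle.} The probabilistic argument is routine once the earlier results are available. The real difficulty is the rigidity input, Proposition~\ref{prop:mm_inv}, together with the verification that $u=\nabla_x v$ satisfies $\abs{\Pc u}\le C(\abs{u}+\abs{\nabla_x u})$ with bounds good enough for Lemma~\ref{lem:bu}. This is where \ref{it:U3} makes the forcing $\nabla_x f$ vanish on $\{u=0\}$. Within the present proof, the only delicate points are that the chain rule needs $\nabla_x v(t,\cdot)$ bounded, which Proposition~\ref{prop:u_bound} supplies, and that the positivity of the $L^2$-norm uses continuity in $r$ of the integrand.
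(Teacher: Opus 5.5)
Your proposal is correct and follows essentially the same route as the paper. Both use the Malliavin factorization of Lemma~\ref{lem:malliavin_rep}, the flow representation $D_rX_t=J_{0,t}J_{0,r}^{-1}\sigma(r,X_r)$, Proposition~\ref{prop:mm_inv} together with the absolute continuity of the law of $X_t$, and then Bouleau--Hirsch. The only cosmetic difference is in the non-degeneracy step: you use invertibility of $J_{0,t}J_{0,r}^{-1}\sigma(r,X_r)$ for every $r\in[0,t]$, whereas the paper uses continuity of this matrix process near $r=t$ and its value $\sigma(t,X_t)$ at $r=t$ to obtain the lower bound $C_t\ge\frac{\lambda\delta}{2}I_d$ on the Malliavin covariance matrix; both arguments are valid.
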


The theorem is formulated for strictly positive times, and it is useful to briefly explain why the endpoints \(t=0\) and \(t=T\) fall outside its scope. The reason is that these two times are of a different nature. At the initial time, the backward variable is deterministic when the initial condition is deterministic, so no density can exist. At the terminal time, by contrast, the question is no longer governed by the rigidity mechanism developed in this paper, but instead by the regularity properties of the image measure of \(X_T\) under the terminal map \(g\).

\begin{rem}
The terminal time \(t=T\) must be treated separately, since in that case
\begin{equation*}
Y_T = g(X_T),
\end{equation*}
and the density question is governed directly by the regularity of the
pushforward of the law of \(X_T\) through \(g\). Under
Assumption~\ref{ass:parabolic}, the law of \(X_T\) admits a density
\(p_T\left(x\right)\) with respect to the Lebesgue measure, and Aronson's bounds
imply that \(p_T\left(x\right)>0\) for every \(x\in\R^d\) \cite{Aronson}. Thus,
from a measure-theoretic viewpoint, the natural condition for
\(g\left(X_T\right)\) to admit a density is that the pushforward measure
\(g_\#\left(p_T\left(x\right)dx\right)\) be absolutely continuous with respect to the Lebesgue measure on \(\R\). Equivalently, denoting by \(\abs{\cdot}\) the Lebesgue measure (on \(\R^d\) or on \(\R\), according to context), one must have
\begin{equation*}
\abs{ g^{-1}(N) }=0
\quad
\text{for every Borel set } N\subset \R \text{ such that } \abs{N}=0.
\end{equation*}
A convenient sufficient condition is that \(g\) be locally Lipschitz and that $\abs{\nabla g}>0$ almost everywhere. In that case, the coarea formula yields
\begin{equation*}
\mathrm{Law}\left( g(X_T) \right)(dy)=\rho_g(y)dy,
\qquad
\rho_g(y)=\int_{g^{-1}(y)} \frac{p_T(x)}{\abs{\nabla g(x)}}d\mathcal H^{d-1}(x),
\end{equation*}
for almost every \(y\in\R\), where \(\mathcal H^{d-1}\) is the natural surface measure on the level set \(g^{-1}\left(y\right)\) -- see, for instance, \cite[Section~3.4.3]{lEvans2015}. In particular, the bounded-gradient assumption in Assumption~\ref{ass:rigidity}\ref{it:U1} should not be viewed as an optimal condition for the terminal law \(g\left(X_T\right)\). Its role in the present paper is different: it is a convenient coefficient-level hypothesis ensuring the global boundedness of \(u=\nabla_x v\), which is needed in the rigidity argument at strictly positive times.
\end{rem}

\section{Regularity of the decoupling field}\label{sec:regularity}

This section establishes the analytic regularity properties of the
decoupling field that connect the Markovian BSDE with the PDE arguments used
later in the proof. We first show that the backward component is represented
by a sufficiently regular function \(v\) solving the associated semilinear
parabolic equation in the classical sense. This makes it possible to
differentiate the equation and study \(u=\nabla_xv\) through a linear
parabolic system. Under Assumption~\ref{ass:rigidity}, we then obtain the
global bounds needed for the rigidity argument. Once these bounds are
available, the Malliavin chain rule gives the factorization of \(D_rY_t\)
through \(\nabla_xv(t,X_t)\), as shown in
Subsection~\ref{sec:malliavin_rep}.

\subsection{Basic regularity of the decoupling field and identification of the martingale integrand}

We begin with the basic regularity statement that connects the
Markovian BSDE to its associated semilinear PDE and identifies its
martingale integrand in terms of the spatial gradient of the
decoupling field.

\begin{prop}[Basic regularity and identification of the martingale integrand]\label{prop:reg}
Assume \ref{ass:parabolic} and \ref{ass:backward}.
Let \(\left(v,w\right)\) be the Markovian solution fixed in Assumption~\ref{ass:backward}, and recall from \eqref{eq:A_def} that
$A\left(t,x\right) = \frac12\sigma\left(t,x\right)\sigma\left(t,x\right)^\transpose$. Then, the following hold.
\begin{enumerate}
\renewcommand{\labelenumi}{\textup{(\alph{enumi})}}
\renewcommand{\theenumi}{\alph{enumi}}
\item The map \(v\) belongs to $C\left(\left[0,T\right]\times \R^d\right)\cap
C^{1+\alpha/2,2+\alpha}_{\loc}\left(\left[0,T\right)\times \R^d\right)$ and is a classical solution of
\begin{equation}\label{eq:PDE_v}
\left\{
\begin{aligned}
&\partial_t v
+ \sum_{i,j=1}^d A_{ij}(t,x)\partial_{ij} v
+ \sum_{i=1}^d b_i(t,x)\partial_i v
+ f\left(t,x,v,(\nabla_x v)\sigma\right) = 0,
\\
&v(T,x) = g(x),
\end{aligned}
\right.
\end{equation}
on \(\left[0,T\right)\times \R^d\).
\item Set \(u:=\nabla_x v\). For every \((s,x)\in[0,T)\times\R^d\),
\begin{equation*}
Z_t^{s,x} = u\bigl(t,X_t^{s,x}\bigr)\,\sigma\bigl(t,X_t^{s,x}\bigr)
\qquad \text{for } \Id t\otimes\Id\mathbb{P}\text{-almost every }
(t,\omega)\in[s,T]\times\Omega.
\end{equation*}
Consequently, \(w=u\sigma\) Lebesgue-almost everywhere on
\([0,T)\times\R^d\), and
\begin{equation*}
\sup_{(t,x)\in[0,T)\times\R^d}\abs{u(t,x)\,\sigma(t,x)} \le C_Z.
\end{equation*}
\end{enumerate}
\end{prop}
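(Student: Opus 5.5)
The plan is to identify \(v\) with the unique bounded-growth classical solution of a truncated semilinear Cauchy problem, and then to read off (b) from It\^o's formula and uniqueness of BSDE solutions.

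\textbf{Step 1 (truncation).} Let \(\rho_Y\colon\R\to\R\) and \(\rho_Z\colon\R^d\to\R^d\) be the radial projections onto \(\{\abs{y}\le C_Y\}\) and \(\{\abs{z}\le C_Z\}\); take the identity when the constant is infinite. Set
\[
\tilde f(t,x,y,z):=f\bigl(t,x,\rho_Y(y),\rho_Z(z)\bigr).
\]
By \eqref{eq:lip_fyz} and the convexity of the strip, \(\tilde f\) is globally Lipschitz in \((y,z)\). By \eqref{eq:holder_f_tx}, it is locally H\"older in \((t,x)\) uniformly for bounded \((y,z)\). Moreover \(\tilde f=f\) along the solution, by Remark~\ref{rem:CYCZ}. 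So \((Y^{s,x},Z^{s,x})\) also solves the BSDE with generator \(\tilde f\), and since that generator is Lipschitz, this solution is unique in the standard \(L^2\) class.

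\textbf{Step 2 (classical solution).} Next I construct a classical solution \(\tilde v\) of \eqref{eq:PDE_v} with \(\tilde f\) in place of \(f\). The PDE coefficients satisfy the requirements: \(A\) is uniformly elliptic, bounded and Lipschitz by Remark~\ref{rem:forward_roles}, and \(b\) is bounded and H\"older by \ref{it:P1}--\ref{it:P2}. The data \(g\in C^1\) have polynomial growth.

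I would first prove a priori continuity and local boundedness of the viscosity solution given by the nonlinear Feynman--Kac formula, and then run a fixed-point/Schauder scheme. The scheme starts from the linear problem \(\partial_t\phi+\Lc\phi=-F\), with a source \(F\) that is bounded and locally H\"older on compact subsets of \([0,T)\times\R^d\). From the Gaussian bounds on the fundamental solution \(\Gamma\) of \(\partial_t+\Lc\), one gets that \(\nabla_x\) of \(\int_t^T\!\int\Gamma F\) is locally H\"older. Iterating the Duhamel representation
\[
\tilde v(t,x)=\int\Gamma(t,x;T,\xi)g(\xi)\Id\xi+\int_t^T\!\!\int\Gamma(t,x;r,\xi)\,\tilde f\bigl(r,\xi,\tilde v,\nabla\tilde v\,\sigma\bigr)\Id\xi\Id r
\]
then gives \(\nabla_x\tilde v\) with the bound \(\abs{\nabla\tilde v}\lesssim (T-t)^{-1/2}\) times polynomial weights. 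This makes the source locally H\"older, and interior Schauder estimates upgrade \(\tilde v\) to \(C^{1+\alpha/2,2+\alpha}_{\loc}([0,T)\times\R^d)\). Continuity up to \(t=T\) comes from the first term together with the integrability of \((T-r)^{-1/2}\).

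\textbf{Step 3 (identification).} Fix \(s<T'<T\) and apply It\^o's formula to \(\tilde v(t,X^{s,x}_t)\) on \([s,T']\). This shows that \(\bigl(\tilde v(t,X_t),\nabla\tilde v\,\sigma(t,X_t)\bigr)\) solves the \(\tilde f\)-BSDE with terminal value \(\tilde v(T',X_{T'})\). Letting \(T'\uparrow T\) using continuity and moment bounds, it solves the \(\tilde f\)-BSDE on \([s,T]\). Uniqueness then gives
\[
v(s,x)=Y^{s,x}_s=\tilde v(s,x),
\qquad
Z^{s,x}_t=u(t,X^{s,x}_t)\,\sigma(t,X^{s,x}_t)\quad \Id t\otimes\Id\mathbb P\text{-a.e.}
\]
Since \(\abs{v}\le C_Y\), the truncation \(\rho_Y\) is inactive in the PDE. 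For \(\rho_Z\), observe that \(\abs{w(t,X_t)}\le C_Z\) a.e. Because the law of \(X^{s,x}_t\) has a positive density (Aronson), this yields \(w=u\sigma\) Lebesgue-a.e., and hence \(\abs{u\sigma}\le C_Z\) a.e. By continuity of \(u\sigma\) on \([0,T)\times\R^d\), the bound holds everywhere. Thus \(\rho_Z\) is also inactive, \(\tilde f(\cdot,v,u\sigma)=f(\cdot,v,u\sigma)\), and (a) follows, along with (b).

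\textbf{Main obstacle.} I expect Step 2 to be the hardest: obtaining gradient bounds and local H\"older regularity of \(\nabla\tilde v\) near \(T\) with only \(g\in C^1\) of polynomial growth and \(f\) merely H\"older in \((t,x)\). I would handle it through the weighted contraction in the space with norm \(\sup(T-t)^{1/2}\abs{\nabla\phi}(1+\abs{x})^{-m}\), using the standard Gaussian estimates for \(\Gamma\) and \(\nabla_x\Gamma\).
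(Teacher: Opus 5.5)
Your proposal is sound, but the central step takes a different route from the paper. The truncation and the closing bookkeeping are the same as in the paper's Steps~1 and~3: Aronson positivity gives $w=u\sigma$ a.e., continuity of $u\sigma$ makes $\abs{u\sigma}\le C_Z$ pointwise, and hence $\rho_Y,\rho_Z$ are inactive. The difference lies in how the classical solution is produced.

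The paper never constructs a global solution. It proceeds in three moves:
\begin{enumerate}
\item It proves, by $L^2$-stability of the truncated BSDE, that $v$ is continuous on $[0,T]\times\R^d$. This uses (B3), through $\abs{\widetilde f(t,x,y,z)-\widetilde f(t,x',y,z)}\le K_f\abs{z}\abs{x-x'}$.
\item It solves semilinear Dirichlet problems on $[0,T)\times B_R$ with boundary data $v$ itself.
\item It identifies these local solutions with $v$ by BSDE uniqueness on $[s,\tau_R]$.
\end{enumerate}
You instead build a global solution $\tilde v$ of the truncated Cauchy problem by a Duhamel fixed point, and identify it with $v$ by a single global uniqueness argument.

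The paper's route needs only local PDE existence and no growth bookkeeping at infinity. Its price is a separate probabilistic continuity proof and a localization by exit times.

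Your route needs no a priori information on $v$. Continuity of $v$ up to $T$ is an output, so the ``a priori continuity of the viscosity solution'' you mention at the start of Step~2 is unnecessary. As far as I can see, your route also does not use (B3) at this stage, and it yields explicit growth and gradient bounds. Its cost is heavier machinery:
\begin{itemize}
\item Gaussian bounds for $\Gamma$ and $\nabla_x\Gamma$;
\item a contraction in a space with singular time weight;
\item H\"older estimates for gradients of potentials whose sources blow up like $(T-r)^{-1/2}$.
\end{itemize}

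One step needs more care than you give it. Your bound is only $\abs{\nabla\tilde v}\lesssim (T-t)^{-1/2}(1+\abs{x})^m$. Since (B1) gives no growth control on $\nabla_x g$, the terminal term does not allow anything better in general. Then $\E\int_s^T\abs{\nabla\tilde v\,\sigma(r,X^{s,x}_r)}^2\Id r$ is not obviously finite, because $(T-r)^{-1}$ is not integrable. So you cannot directly place the It\^o pair in the uniqueness class on $[s,T]$.

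There are two ways to close this:
\begin{itemize}
\item Compare the two solutions on $[s,T']$ with the $L^2$-stability estimate. Both pairs lie in the $L^2$ class there, and you get $\abs{v(s,x)-\tilde v(s,x)}^2\le C\,\E\abs{Y^{s,x}_{T'}-\tilde v(T',X^{s,x}_{T'})}^2$, with $C$ independent of $T'$. The right-hand side tends to $0$ as $T'\uparrow T$, by path continuity of $Y^{s,x}$, continuity of $\tilde v$ up to $T$, and uniform integrability. Once $v=\tilde v$ is known, identify $Z$ on each $[s,T']$.
\item Alternatively, derive a bound on $\E\int_s^{T'}\abs{\nabla\tilde v\,\sigma(r,X^{s,x}_r)}^2\Id r$ uniform in $T'$ from It\^o's formula for $\abs{\tilde v(r,X^{s,x}_r)}^2$. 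This uses the uniform polynomial growth of $\tilde v$ on $[0,T]\times\R^d$.
\end{itemize}
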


\begin{proof}[Proof of Proposition \ref{prop:reg}]
We split the proof into three steps. We first reduce the problem to a
BSDE whose generator is globally Lipschitz in \(\left(y,z\right)\), then establish
the continuity and local classical regularity of the decoupling field,
and finally identify the $Z$-component of the Markovian solution.
\\~\\
\noindent\emph{Step 1: Reduction to a BSDE globally Lipschitz in \(\left(y,z\right)\).}
Define the truncation maps \(\rho_Y:\R\to \R\) and \(\rho_Z:\R^d\to \R^d\) by
\begin{equation*}
\rho_Y(y)
=
\begin{cases}
y, & \text{if } C_Y=\infty,
\\
\max(-C_Y,\min(y,C_Y)), & \text{if } C_Y<\infty,
\end{cases}
\end{equation*}
and

\begin{equation*}
\rho_Z(z)
=
\begin{cases}
z, & \text{if } C_Z=\infty,\\
z, & \text{if } C_Z<\infty \text{ and } \abs{z}\le C_Z,\\
C_Z\dfrac{z}{\abs{z}}, & \text{if } C_Z<\infty \text{ and } \abs{z}>C_Z.
\end{cases}
\end{equation*}
When $C_Z<\infty$, this is the metric projection of $\R^d$ onto the
closed ball $\{z\in\R^d:\abs{z}\le C_Z\}$. Consequently, $\rho_Z$ is
$1$-Lipschitz,
\begin{equation}\label{eq:rhoZ_properties}
\abs{\rho_Z(z)}\le \abs{z},
\qquad
\rho_Z(z)=z\quad\text{whenever }\abs{z}\le C_Z,
\end{equation}
and the definition is well posed also when $C_Z=0$.

Set
\begin{equation*}
\widetilde f(t,x,y,z)
=
f\left(t,x,\rho_Y(y),\rho_Z(z)\right),
\qquad
(t,x,y,z)\in [0,T]\times \R^d\times \R\times \R^d.
\end{equation*}
By \eqref{eq:holder_f_tx}, the truncated generator
\(\widetilde f\) is H\"older continuous in \(t\) and \(x\), with exponent
\(\alpha/2\) in time and \(\alpha\) in space, locally uniformly in
\(\left(y,z\right)\). More precisely, for every \(R>0\), there exists
\(K_{f,R}>0\) such that, for all \(t,s\in \left[0,T\right]\),
\(x,\xi\in \R^d\), \(y\in \R\), and \(z\in \R^d\) satisfying
\(\abs{y}+\abs{z}\le R\),
\begin{equation}\label{eq:holder_tildef_tx}
\abs{
\widetilde f(t,x,y,z)-\widetilde f(s,\xi,y,z)
}
\leq
K_{f,R}
\left(
\abs{t-s}^{\alpha/2}
+
\abs{x-\xi}^{\alpha}
\right).
\end{equation}

By Assumption~\ref{ass:backward}\ref{it:B2}, the derivatives
\(\partial_y f\) and \(\nabla_z f\) are bounded by \(K_f\) on the strip \(\Sc\). Since both truncation maps \(\rho_Y\) and \(\rho_Z\) are $1$-Lipschitz and take values in the corresponding components of \(\Sc\), it follows that \(\widetilde f\) is
globally Lipschitz in \(\left(y,z\right)\), uniformly in \(\left(t,x\right)\). Moreover, by Remark~\ref{rem:CYCZ},
\(\abs{Y_t^{s,x}}\le C_Y\) for every \(t\in[s,T]\), almost surely, while
\(\abs{Z_t^{s,x}}\le C_Z\) for \(\Id t\otimes\Id\mathbb P\)-almost every
\((t,\omega)\in[s,T]\times\Omega\). Consequently,
\begin{equation*}
\widetilde f\left(t,X_t^{s,x},Y_t^{s,x},Z_t^{s,x}\right)
=
f\left(t,X_t^{s,x},Y_t^{s,x},Z_t^{s,x}\right)
\end{equation*}
for \(\Id t\otimes\Id\mathbb P\)-almost every
\((t,\omega)\in[s,T]\times\Omega\).
\\

We next record two structural consequences of
Assumption~\ref{ass:backward}\ref{it:B3}. First, by
\eqref{eq:struct_fx_zero}, the map \(x\mapsto f\left(t,x,y,0\right)\) is constant
whenever \(\abs{y}\le C_Y\). Hence
\begin{equation*}
\widetilde f(t,x,y,0)=\widetilde f(t,0,y,0).
\end{equation*}
Second, by \eqref{eq:struct_fx_lip},
\begin{equation*}
\abs{\nabla_x \widetilde f(t,x,y,z)}
=
\abs{\nabla_x f\left(t,x,\rho_Y(y),\rho_Z(z)\right)}
\le K_f \abs{\rho_Z(z)}
\le K_f \abs{z}.
\end{equation*}
Therefore, for all \(x,x'\in \R^d\),
\begin{equation}\label{eq:tildef_x_control}
\abs{\widetilde f(t,x,y,z)-\widetilde f(t,x',y,z)}
\le K_f \abs{z}\abs{x-x'}.
\end{equation}
Using \eqref{eq:lip_fyz} together with the fact that
\(\widetilde f\left(t,x,y,0\right)=\widetilde f\left(t,0,y,0\right)\),
we obtain
\begin{equation*}
\abs{\widetilde f(t,x,y,z)}
\le
\abs{f(t,0,0,0)} + K_f\abs{y} + K_f\abs{z}.
\end{equation*}
By \eqref{eq:holder_f_tx}, the map \(t\mapsto f(t,0,0,0)\) is bounded
on \(\left[0,T\right]\). Thus, \(\widetilde f\) has linear growth,
uniformly in \(t\).
\\

Because \(b\) and \(\sigma\) have at most linear growth and \(g\) has
at most polynomial growth, standard moment estimates for the forward
SDE yield
\begin{equation*}
\E\left(\abs{g(X_T^{s,x})}^p\right) < \infty
\end{equation*}
for every \(p\ge 2\) and every
\(\left(s,x\right)\in \left[0,T\right]\times \R^d\). Since \(\widetilde f\)
coincides with \(f\) along the Markovian solution, the pair
\(\left(Y^{s,x},Z^{s,x}\right)\) solves the BSDE with terminal condition
\(g\left(X_T^{s,x}\right)\) and generator
\(\widetilde f\left(t,X_t^{s,x},y,z\right)\). Since \(\widetilde f\) is
globally Lipschitz in \(\left(y,z\right)\) with linear growth, and since
the integrability assumed in Definition~\ref{defn:markov_sol} places
\(\left(Y^{s,x},Z^{s,x}\right)\) in the uniqueness class of the standard
\(L^p\)-theory for Lipschitz BSDEs, the a priori estimates of that
theory yield
\begin{equation*}
\E\left(\sup_{t\in[s,T]} \abs{Y_t^{s,x}}^p\right) < \infty,
\qquad
\E\left(\left(\int_s^T \abs{Z_r^{s,x}}^2\Id r\right)^{p/2}\right) < \infty,
\end{equation*}
for every \(p\ge 2\) and every
\(\left(s,x\right)\in \left[0,T\right]\times \R^d\).
\\~\\
\noindent\emph{Step 2: Continuity and local classical regularity of the decoupling field.}
We claim that \(v\) is continuous on \(\left[0,T\right]\times \R^d\).
\\

Fix a compact set \(K\subset \left[0,T\right]\times \R^d\) and two points
\(\left(s,x\right),\left(s',x'\right)\in K\). By symmetry, it is enough to
consider the case \(s\le s'\). Set
\begin{equation*}
F_r^{s,x}
:=
\widetilde f\left(r,X_r^{s,x},Y_r^{s,x},Z_r^{s,x}\right),
\qquad r\in[s,T].
\end{equation*}
Taking expectations in the BSDE between \(s\) and \(s'\), and using
\(Y_s^{s,x}=v(s,x)\), gives
\begin{equation*}
v(s,x)
=
\E\left[Y_{s'}^{s,x}+\int_s^{s'}F_r^{s,x}\,\Id r\right].
\end{equation*}
Since \(Y_{s'}^{s',x'}=v(s',x')\), it follows that
\begin{equation}\label{eq:cont_v_split}
\abs{v(s,x)-v(s',x')}
\le
\left(\E\abs{Y_{s'}^{s,x}-Y_{s'}^{s',x'}}^2\right)^{1/2}
+
\E\int_s^{s'}\abs{F_r^{s,x}}\,\Id r.
\end{equation}
The second term is negligible as \(s'\downarrow s\). Indeed, the linear
growth estimate for \(\widetilde f\) obtained in Step~1 yields
\begin{align}
\E\int_s^{s'}\abs{F_r^{s,x}}\,\Id r
&\le
C\abs{s'-s}
+
K_f\abs{s'-s}\,
\E\left(\sup_{r\in[s,T]}\abs{Y_r^{s,x}}\right)
+
K_f\E\int_s^{s'}\abs{Z_r^{s,x}}\,\Id r
\notag\\
&\le
C_K\left(\abs{s'-s}+\abs{s'-s}^{1/2}\right),
\label{eq:short_interval_driver}
\end{align}
where in the last line we used Cauchy--Schwarz together with the
\(L^2\)-estimates of Step~1 and the standard forward moment estimates,
which are uniform when the initial point ranges over the compact set
\(K\).

We now compare the two BSDEs only on their common interval
\([s',T]\). The standard \(L^2\)-stability estimate for Lipschitz BSDEs
gives
\begin{align}
\E\abs{Y_{s'}^{s,x}-Y_{s'}^{s',x'}}^2
&\le C_K \left(
\E\left(\abs{g(X_T^{s,x})-g(X_T^{s',x'})}^2\right)
\right.
\notag\\
&\quad +
\left.
\E\int_{s'}^T
\abs{
\widetilde f\left(r,X_r^{s,x},Y_r^{s,x},Z_r^{s,x}\right)
-
\widetilde f\left(r,X_r^{s',x'},Y_r^{s,x},Z_r^{s,x}\right)
}^2\Id r
\right).
\label{eq:cont_v_est}
\end{align}
Here \(C_K\) may be chosen uniformly for
\((s,x),(s',x')\in K\), since the Lipschitz constant of
\(\widetilde f\) in \((y,z)\) is global.
\\
We first treat the terminal term. Let \(R>0\), and define
\begin{equation*}
\Omega_R
=
\left\{
\abs{X_T^{s,x}}\le R,\ \abs{X_T^{s',x'}}\le R
\right\}.
\end{equation*}
Since \(g\in C^1\left(\R^d\right)\), its gradient is bounded on the compact ball \(\overline{B_R}\), so that \(g\) is Lipschitz
on \(B_R\) with some constant \(L_R>0\).
Therefore,
\begin{align*}
\E\left(\abs{g(X_T^{s,x})-g(X_T^{s',x'})}^2\right)
&\le
L_R^2\E\left(\abs{X_T^{s,x}-X_T^{s',x'}}^2\mathbf 1_{\Omega_R}\right)
\\
&\quad+
\E\left(\abs{g(X_T^{s,x})-g(X_T^{s',x'})}^2\mathbf 1_{\Omega_R^c}\right).
\end{align*}
The first term tends to \(0\) as \(\left(s',x'\right)\to \left(s,x\right)\) by the standard
continuity estimate for the forward flow. For the second term, the
polynomial growth of \(g\) and the uniform moment bounds for
\(X^{s,x}\) on compact sets imply, by Cauchy--Schwarz and Markov's
inequality, that
\begin{equation*}
\sup_{(s,x),(s',x')\in K}
\E\left(\abs{g(X_T^{s,x})-g(X_T^{s',x'})}^2\mathbf 1_{\Omega_R^c}\right)
\longrightarrow 0
\qquad \text{as } R\to\infty.
\end{equation*}
Thus,
\begin{equation}\label{eq:terminal_term_cont}
\E\left(\abs{g(X_T^{s,x})-g(X_T^{s',x'})}^2\right)
\longrightarrow 0
\qquad \text{as } (s',x')\to (s,x).
\end{equation}
\\
For the generator term in \eqref{eq:cont_v_est}, estimate
\eqref{eq:tildef_x_control} yields, for $r\in\left[s\vee s',T\right]$,
\begin{equation*}
\abs{
\widetilde f\left(r,X_r^{s,x},Y_r^{s,x},Z_r^{s,x}\right)
-
\widetilde f\left(r,X_r^{s',x'},Y_r^{s,x},Z_r^{s,x}\right)
}
\le
K_f \abs{Z_r^{s,x}}\abs{X_r^{s,x}-X_r^{s',x'}}.
\end{equation*}
By a supremum bound and the Cauchy--Schwarz inequality,
\begin{align*}
&\E\int_{s\vee s'}^T
\abs{
\widetilde f\left(r,X_r^{s,x},Y_r^{s,x},Z_r^{s,x}\right)
-
\widetilde f\left(r,X_r^{s',x'},Y_r^{s,x},Z_r^{s,x}\right)
}^2\Id r
\\
&\qquad\qquad\qquad\qquad \le
K_f^2
\E\left(
\sup_{s\vee s'\le r\le T}\abs{X_r^{s,x}-X_r^{s',x'}}^2
\int_{s\vee s'}^T \abs{Z_r^{s,x}}^2\Id r
\right)
\\
&\qquad\qquad\qquad\qquad \le
K_f^2
\left(
\E\left(\sup_{s\vee s'\le r\le T}\abs{X_r^{s,x}-X_r^{s',x'}}^4\right)
\right)^{1/2}
\\
&\qquad\qquad\qquad\qquad\quad\times
\left(
\E\left(\int_{s\vee s'}^T \abs{Z_r^{s,x}}^2\Id r\right)^2\right)^{1/2}.
\end{align*}
The second factor is uniformly
bounded for \(\left(s,x\right)\in K\), by the \(L^p\)-estimate from Step~1 with $p=4$, while the first factor tends to \(0\) as
\(\left(s',x'\right)\to \left(s,x\right)\) by the standard continuity estimate for the forward flow. Therefore,
\begin{equation}\label{eq:generator_term_cont}
\E\int_{s\vee s'}^T
\abs{
\widetilde f\left(r,X_r^{s,x},Y_r^{s,x},Z_r^{s,x}\right)
-
\widetilde f\left(r,X_r^{s',x'},Y_r^{s,x},Z_r^{s,x}\right)
}^2\Id r
\longrightarrow 0
\end{equation}
as \(\left(s',x'\right)\to \left(s,x\right)\).
Combining \eqref{eq:cont_v_split},
\eqref{eq:short_interval_driver}, \eqref{eq:cont_v_est},
\eqref{eq:terminal_term_cont}, and \eqref{eq:generator_term_cont}, we
obtain
\begin{equation*}
\abs{v(s,x)-v(s',x')}\longrightarrow0
\qquad\text{as }(s',x')\to(s,x)
\end{equation*}
whenever \(s\le s'\). The case \(s'\le s\) follows by interchanging
\((s,x)\) and \((s',x')\). Hence \(v\) is continuous on \(K\), and
therefore on all of \(\left[0,T\right]\times\R^d\).
\\

Now fix \(R>0\) and consider the parabolic cylinder and its parabolic boundary
\begin{equation*}
Q_R=[0,T)\times B_R,
\qquad
\partial_p Q_R
=
\left([0,T)\times \partial B_R\right)
\cup
\left(\{T\}\times \overline{B_R}\right).
\end{equation*}
On \(Q_R\), consider the Dirichlet problem
\begin{equation}\label{eq:local_dirichlet_v}
\left\{
\begin{aligned}
& \partial_t \hat v
+ \sum_{i,j=1}^d A_{ij}(t,x)\partial_{ij}\hat v
+ \sum_{i=1}^d b_i(t,x)\partial_i\hat v
+ \widetilde f\left(t,x,\hat v,(\nabla_x \hat v)\sigma\right) = 0,
\\
&\hat v = v \text{ on } \partial_p Q_R.
\end{aligned}
\right.
\end{equation}
Since \(v\) is continuous on \(\partial_p Q_R\), the boundary condition
in \eqref{eq:local_dirichlet_v} is continuous. Moreover, by \ref{it:P1} and \ref{it:P2}, together with Assumption \ref{ass:flow} and Remark \ref{rem:forward_roles}, the coefficients \(A\) and
\(b\) are H\"older continuous in \(t\) and \(x\), with exponent
\(\alpha/2\) in time and \(\alpha\) in space, on bounded cylinders, and
\(A\) is uniformly elliptic. By \eqref{eq:holder_tildef_tx},
\(\widetilde f\) is H\"older continuous in \(t\) and \(x\), with the same
exponents, locally uniformly in \(\left(y,z\right)\), and it is globally
Lipschitz in \(\left(y,z\right)\). Therefore the classical local theory
for semilinear uniformly parabolic Dirichlet problems yields a solution
\(\hat v\) such that
\begin{equation*}
\hat v\in C\left(Q_R\cup \partial_p Q_R\right)
\quad\text{and}\quad
\hat v\in C^{1+\alpha/2,2+\alpha}\left(Q'\right)
\end{equation*}
for every \(Q'\Subset Q_R\) -- see Lady\v{z}enskaja, Solonnikov and
Ural'ceva \cite[Chapter~VI, Theorem~4.2]{LadSolUra67}. The corresponding
interior Schauder estimates may also be found in Friedman
\cite[Chapter~3, Theorem~5]{friedman} and Krylov
\cite[Theorem~8.11.1]{Krylov1996}.
\\

Let $\tau_R
=
\inf\left\{ t\ge s : \abs{X_t^{s,x}}\ge R \right\}\wedge T$, and define, for \(t\in \left[s,T\right]\),
\begin{equation*}
\hat Y_t^{s,x}
=
\hat v\left(t\wedge \tau_R,X_{t\wedge \tau_R}^{s,x}\right).
\end{equation*}
Applying It\^o's formula to \(\hat v\left(t,X_t^{s,x}\right)\) up to
\(\tau_R\wedge (T-\varepsilon)\), and then letting
\(\varepsilon\downarrow 0\), using the continuity of \(\hat v\) on
\(Q_R\cup \partial_p Q_R\), shows that \(\hat Y^{s,x}\) satisfies on
\(\left[s,\tau_R\right]\) the BSDE with generator \(\widetilde f\) and
terminal value
\begin{equation*}
\hat Y_{\tau_R}^{s,x}
=
\hat v\left(\tau_R,X_{\tau_R}^{s,x}\right)
=
v\left(\tau_R,X_{\tau_R}^{s,x}\right)
=
Y_{\tau_R}^{s,x},
\end{equation*}
because \(\hat v=v\) on the parabolic boundary. By uniqueness for BSDEs with generator globally Lipschitz in \((y,z)\) on
the random interval \([s,\tau_R]\), we obtain
\begin{equation*}
\hat Y_t^{s,x}=Y_t^{s,x} \quad \text{for } t\in [s,\tau_R],
\qquad\text{and}\qquad
\hat Z_t^{s,x}=Z_t^{s,x}
\quad \Id t\otimes\Id\mathbb{P}\text{-a.e.\ on } [s,\tau_R],
\end{equation*}
where
\begin{equation*}
\hat Z_t^{s,x}
:=\bigl(\nabla_x\hat v\,\sigma\bigr)(t,X_t^{s,x})
\end{equation*}
denotes the martingale integrand furnished by the It\^o computation above.
In particular, $\hat v(t,x)=v(t,x)$ on $[0,T)\times B_R$. Since \(R>0\)
was arbitrary, \(v\) is a classical solution of
\eqref{eq:PDE_v}, with \(\widetilde f\) in place of \(f\), on \(\left[0,T\right)\times \R^d\) and belongs to
$C^{1+\alpha/2,2+\alpha}_{\loc}\left(\left[0,T\right)\times \R^d\right)$.
\\~\\

\noindent\emph{Step 3: Identification of the \(Z\)-component.}
Fix \((s,x)\in[0,T)\times\R^d\). For \(t<\tau_R\) we have
\(X_t^{s,x}\in B_R\), where \(\hat v=v\) and hence \(\nabla_x\hat v=u\), so
the identification of Step~2 reads
\(Z_t^{s,x}=u(t,X_t^{s,x})\,\sigma(t,X_t^{s,x})\),
\(\Id t\otimes\Id\mathbb{P}\)-a.e.\ on \([s,\tau_R]\). Since \(\tau_R=T\)
for all \(R\) large enough almost surely (the paths of \(X^{s,x}\) being
bounded on \([s,T]\)), letting \(R\to\infty\) proves the first claim of
part~(b). Taking \(s=0\) and using the defining relation
\(Z_t^{0,x}=w(t,X_t^{0,x})\), which holds for
\(\Id t\otimes\Id\mathbb P\)-almost every \((t,\omega)\), together with the identification just proved, we obtain
\begin{equation*}
w(t,X_t^{0,x})=u(t,X_t^{0,x})\,\sigma(t,X_t^{0,x})
\qquad
\Id t\otimes\Id\mathbb P\text{-a.e.}
\end{equation*}
Since the law of \(X_t^{0,x}\) is equivalent to Lebesgue measure
for every \(t\in(0,T)\) by Aronson's bounds \cite{Aronson}, Fubini's theorem then
yields \(w=u\sigma\) Lebesgue-almost everywhere on \([0,T)\times\R^d\).
Finally, since \(u\sigma\) is continuous on \([0,T)\times\R^d\) by part~(a)
and coincides almost everywhere with \(w\),
\(\sup_{[0,T)\times\R^d}\abs{u\sigma}
=\esssup_{[0,T)\times\R^d}\abs{w}\le C_Z\).
Since also \(\abs{v}\le C_Y\), this and \eqref{eq:rhoZ_properties} give
\(\widetilde f(t,x,v,u\sigma)=f(t,x,v,u\sigma)\) on \([0,T)\times\R^d\),
so the equation obtained in Step~2 is \eqref{eq:PDE_v}.
\\~\\
This proves both assertions of Proposition~\ref{prop:reg}.
\end{proof}

\subsection{Extra regularity of the decoupling field}\label{sec:extra_reg}

Proposition~\ref{prop:reg} provides the basic link between the
Markovian BSDE and its semilinear PDE, but the rigidity argument requires
more regularity because unique continuation and backward uniqueness are
applied to the differentiated field \(u=\nabla_xv\). Under
Assumption~\ref{ass:extra_reg}, we first show that \(u\) is regular enough
for the differentiated equation to hold in the classical sense and derive
the structural differential inequality used in the unique continuation
argument. Under the stronger hypotheses of Assumption~\ref{ass:rigidity},
we then obtain uniform bounds for \(u\) and \(\nabla_xu\) on positive-time
slabs. These bounds provide the input needed for backward uniqueness.
\\

Since the operator appearing in the rigidity argument is slightly more
convenient to write in divergence form, we introduce (recalling \eqref{eq:A_def})
\begin{equation}\label{eq:P_op_def}
\Pc \phi
=
\partial_t \phi + \sum_{i,j=1}^d \partial_i\left( A_{ij}\partial_j \phi\right).
\end{equation}
The point of this notation is that once the differentiated system for
\(u\) is established, the lower-order terms can be collected on the
right-hand side and compared directly with \(\abs{u}\) and
\(\abs{\nabla_xu}\).

\begin{prop}[Extra regularity of the spatial gradient]\label{prop:extra_reg}
Assume \ref{ass:parabolic} and \ref{ass:extra_reg}. Let \(\left(v,w\right)\) be the Markovian solution fixed in Assumption~\ref{ass:backward} and set $u := \nabla_x v$. Then,
\begin{equation*}
u \in C^{1+\alpha/2,2+\alpha}_{\loc}\left([0,T)\times \R^d;\R^d\right),
\end{equation*}
and, for each \(k\in\{1,\dots,d\}\),
\begin{align}
\label{eq:PDE_u}
\partial_t u_k &+ \sum_{i,j=1}^d A_{ij}\partial_{ij}u_k
+ \sum_{i=1}^d b_i\partial_i u_k
+ \sum_{i=1}^d (\partial_k b_i)u_i
+ \sum_{i,j=1}^d (\partial_k A_{ij})\partial_i u_j\nonumber
\\
&+ \partial_{x_k}f\left(t,x,v,u\sigma\right)
+ \partial_y f\left(t,x,v,u\sigma\right)u_k\nonumber
\\
&+ \sum_{\ell=1}^d \partial_{z_\ell}f\left(t,x,v,u\sigma\right)
\left(
\sum_{i=1}^d \sigma_{i\ell}\partial_i u_k
+
\sum_{i=1}^d u_i\partial_k\sigma_{i\ell}
\right) =0
\end{align}
in the classical sense on \(\left[0,T\right)\times \R^d\). Moreover, there
exists \(C>0\) such that
\begin{equation}\label{eq:struct_ineq}
\abs{\Pc u(t,x)}
\leq
C\left( \abs{u(t,x)} + \abs{\nabla_xu(t,x)} \right),
\qquad
(t,x)\in [0,T)\times \R^d.
\end{equation}
\end{prop}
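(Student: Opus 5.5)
The plan is to derive the system for \(u\) by differentiating \eqref{eq:PDE_v} in a weak sense, and then to bootstrap regularity of \(u\) with interior Schauder theory. Differentiating \eqref{eq:PDE_v} in \(x_k\) classically is not allowed a priori, since Proposition~\ref{prop:reg} only gives \(v\in C^{1+\alpha/2,2+\alpha}_{\loc}\), so \(\partial_k\partial_{ij}v\) and \(\partial_k\partial_t v\) need not exist.

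\emph{Step 1: the weak differentiated equation.} Fix a cylinder \(Q'\Subset[0,T)\times\R^d\) and a compactly supported test function \(\varphi\). Test \eqref{eq:PDE_v} against \(-\partial_k\varphi\) and integrate by parts. This shows that \(u_k=\partial_kv\) is a distributional solution of the linear equation \eqref{eq:PDE_u}, with the following coefficients regarded as given functions of \((t,x)\): \(\partial_k b_i\), \(\partial_k A_{ij}\), and \(\partial_{x_k}f,\partial_yf,\partial_{z_\ell}f\) evaluated along \((t,x,v,u\sigma)\). Two ingredients justify this. The chain rule for \(x\mapsto f(t,x,v(t,x),u\sigma(t,x))\) is legitimate because \(f\) is \(C^1\) in \((x,y,z)\), \(v\in C^1\) in \(x\), and \(u\sigma\) is \(C^{1}\) in \(x\). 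The term \(\partial_k(A_{ij}\partial_{ij}v)\) is handled by rewriting it as \(A_{ij}\partial_{ij}u_k+(\partial_kA_{ij})\partial_iu_j\), which makes sense distributionally since \(A\) is Lipschitz.

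Equivalently, and more robustly, one can use difference quotients \(\delta_h^k v=(v(\cdot+he_k)-v)/h\). These satisfy a linear equation whose coefficients are integral averages of the derivatives of the data. Local \(C^{1+\alpha/2,2+\alpha}\) bounds of \(v\) make these averages converge locally uniformly, and passing \(h\to0\) in the weak formulation yields \eqref{eq:PDE_u} weakly.

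\emph{Step 2: Schauder bootstrap.} This is the main obstacle, because we must check that the coefficients and the source are locally Hölder with exponents \((\alpha/2,\alpha)\).
\begin{itemize}
\item \(A\) and \(b\) are Hölder by Remark~\ref{rem:forward_roles}. The first-order coefficients \(\partial_kA_{ij}\) and \(\partial_kb_i\) are Hölder by \ref{it:R1}.
\item \(\partial_yf,\nabla_zf,\nabla_xf\) evaluated along \((t,x,v,u\sigma)\) are locally Hölder by \ref{it:R2}. For this, the arguments \((t,x,v,u\sigma)\) must range over compacts of \(\Sc\), which holds by Proposition~\ref{prop:reg}(b) and \(\abs v\le C_Y\). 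One also needs \(v\) and \(u\sigma\) to be locally Hölder in \((t,x)\), which holds since \(v\in C^{1+\alpha/2,2+\alpha}_{\loc}\) and \(\sigma\) is Lipschitz. Composing Hölder functions with a Lipschitz map is needed only with exponent \(\alpha\) (\(\alpha\le1\)), which is fine.
\item The term \(\partial_{z_\ell}f\,u_i\partial_k\sigma_{i\ell}\) is Hölder by \ref{it:R1}.
\end{itemize}
Treat the forcing \(\partial_{x_k}f(\cdot)\) as a known Hölder source. Then \(u_k\) is a weak (indeed \(W^{1,2}_p\)-type) solution of a nondivergence linear system coupled only through lower-order terms, namely \((\partial_kA_{ij})\partial_iu_j\) and \(\sum_i(\partial_kb_i)u_i\). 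Since \(u\in C^{(1+\alpha)/2,1+\alpha}_{\loc}\) already, these coupling terms are Hölder known functions. Each \(u_k\) therefore solves a scalar equation with Hölder data.

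Local \(L^p\) theory then gives \(u_k\in W^{1,2}_{p,\loc}\). Next, uniqueness of the local Dirichlet problem with continuous boundary data, as in Step~2 of Proposition~\ref{prop:reg}, together with interior Schauder estimates (\cite[Chapter~3, Theorem~5]{friedman}, \cite{LadSolUra67}), upgrades this to \(u_k\in C^{1+\alpha/2,2+\alpha}_{\loc}([0,T)\times\R^d)\). Hence \eqref{eq:PDE_u} holds classically.

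\emph{Step 3: the inequality \eqref{eq:struct_ineq}.} Write
\[
\Pc u_k=\partial_tu_k+\sum A_{ij}\partial_{ij}u_k+\sum(\partial_iA_{ij})\partial_ju_k,
\]
and substitute \eqref{eq:PDE_u}. Every remaining term is then linear in \(u\) or \(\nabla_xu\) with a bounded coefficient:
\begin{itemize}
\item \(\nabla_xA\), \(b\), \(\nabla_xb\), \(\sigma\), \(\nabla_x\sigma\) are bounded by Assumptions~\ref{ass:flow} and \ref{ass:parabolic}.
\item \(\partial_yf\) and \(\nabla_zf\) are bounded by \(K_f\) on \(\Sc\).
\item The forcing is controlled by \eqref{eq:struct_fx_zero}--\eqref{eq:struct_fx_lip}, which give \(\abs{\partial_{x_k}f(t,x,v,u\sigma)}\le K_f\abs{u\sigma}\le K_f\norm\sigma_\infty\abs u\).
\end{itemize}
Summing over \(k\) yields \eqref{eq:struct_ineq} with a constant \(C\) depending only on these bounds.
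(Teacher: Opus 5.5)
Your Step~3 matches the paper's, and your list of H\"older checks is essentially the one the paper makes. The difference is in how you obtain the regularity of \(u\), and your route is heavier.

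The paper never passes to a weak equation. Each difference quotient \(\delta_h^kv\) is itself a classical \(C^{1+\alpha/2,2+\alpha}\) solution of \(\partial_t\delta_h^kv+\sum_{i,j}A_{ij}\partial_{ij}\delta_h^kv+\sum_ib_i\partial_i\delta_h^kv=G_h\). Everything else goes into \(G_h\): the terms \(\delta_h^kA_{ij}\,\partial_{ij}v(\cdot+he_k)\) and \(\delta_h^kb_i\,\partial_iv(\cdot+he_k)\), and the mean-value expansion of the \(f\)-term in \(1\), \(\delta_h^kv\), \(\delta_h^k((\nabla_xv)\sigma)\), with segment averages of \(\nabla f\) as coefficients. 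The paper then checks, via \ref{it:R1}--\ref{it:R2}, that \(G_h\) is bounded in \(C^{\alpha/2,\alpha}\) uniformly in \(h\). The interior Schauder estimate, used purely as an a priori bound on functions already in the right class, gives a bound on \(\delta_h^kv\) in \(C^{1+\alpha/2,2+\alpha}(Q')\) uniform in \(h\). Arzel\`a--Ascoli and lower semicontinuity of H\"older seminorms then give \(\partial_kv\in C^{1+\alpha/2,2+\alpha}(Q')\), and \eqref{eq:PDE_u} follows classically as \(h\to0\).

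You let \(h\to0\) first. At that point you only know that \(u_k\in C^{(1+\alpha)/2,1+\alpha}_{\loc}\) is a distributional solution of \(\partial_tu_k+\sum A_{ij}\partial_{ij}u_k=H_k\) with \(H_k\) H\"older. You then need a weak-to-strong regularity step that your sketch passes over:
\begin{itemize}
\item Interior \(L^p\) and Schauder estimates are a priori bounds. They do not by themselves place a distributional solution in \(W^{2,1}_{p,\loc}\).
\item The uniqueness in Step~2 of Proposition~\ref{prop:reg} came from BSDE uniqueness after It\^o's formula for a \(C^{1,2}\) function. That tool is not yet available for \(u_k\).
\end{itemize}

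The step is standard and can be filled in as follows. Multiply \(u_k\) by a cutoff \(\eta\). Solve the Dirichlet problem with zero data and the resulting H\"older right-hand side in \(C^{1+\alpha/2,2+\alpha}\). Identify that solution with \(\eta u_k\) by energy uniqueness for the divergence-form equation with Lipschitz \(A\); this applies because \(\eta u_k\) has a continuous spatial gradient and vanishes near the parabolic boundary.

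So your route does close, but only by importing weak-solution theory. The paper's argument avoids this entirely by applying the Schauder estimate to \(\delta_h^kv\) itself, rather than using difference quotients only to derive the weak equation.
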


\begin{proof}
The argument proceeds in three steps. We first differentiate the
equation for \(v\) by means of difference quotients and use local
Schauder estimates to obtain the improved regularity of \(u\). We then
pass to the limit and identify the differentiated system. Finally, we
derive the structural inequality \eqref{eq:struct_ineq}.
\\~\\
\noindent\emph{Step 1: Differentiating the semilinear equation.}
By Proposition~\ref{prop:reg}, the decoupling field satisfies $v\in
C^{1+\alpha/2,2+\alpha}_{\loc}\left(\left[0,T\right)\times \R^d\right)$ and solves \eqref{eq:PDE_v} classically.
This already gives two spatial derivatives of \(v\) with H\"older
control, but for the rigidity argument we need one further degree of regularity for $u$, with Hölder control on its second spatial derivatives and on its time derivative. To justify this, we differentiate
\eqref{eq:PDE_v} using difference quotients.
\\
Fix three cylinders
\begin{equation*}
Q'\Subset Q''\Subset Q\Subset \left[0,T\right)\times \R^d,
\end{equation*}
and let \(k\in\{1,\dots,d\}\). Write
\begin{equation*}
Q=I\times U,
\end{equation*}
where \(I=(a,b)\) with \(0<a<b<T\) or \(I=[0,b)\) with \(0<b<T\), and
\(U\subset \R^d\) is open. For \(h\neq 0\)
sufficiently small, define
\begin{equation*}
\delta_h^k v(t,x)
=
\frac{v(t,x+h e_k)-v(t,x)}{h}.
\end{equation*}
Since \(Q''\Subset Q\), for \(\abs{h}\) small enough the function
\(\delta_h^k v\) is well defined on \(Q''\). Subtracting the equation
for \(v\left(t,x\right)\) from the equation for
\(v\left(t,x+h e_k\right)\) and dividing by \(h\), we obtain on \(Q''\)
\begin{equation}\label{eq:dq_equation}
\partial_t \delta_h^k v
+ \sum_{i,j=1}^d A_{ij}(t,x)\partial_{ij}\delta_h^k v
+ \sum_{i=1}^d b_i(t,x)\partial_i \delta_h^k v
= G_h.
\end{equation}
For brevity, set
\begin{equation*}
\Psi(t,x):=((\nabla_xv)\sigma)(t,x).
\end{equation*}
Then
\begin{align*}
G_h
&=
-\sum_{i,j=1}^d \delta_h^k A_{ij}(t,x)\partial_{ij}v(t,x+h e_k)
-\sum_{i=1}^d \delta_h^k b_i(t,x)\partial_i v(t,x+h e_k)
\\
&\quad
-\frac{1}{h}\Bigl[
f\bigl(t,x+h e_k,v(t,x+h e_k),\Psi(t,x+h e_k)\bigr)
-f\bigl(t,x,v(t,x),\Psi(t,x)\bigr)
\Bigr].
\end{align*}

The last term is handled by the mean value theorem in the variables
\(\left(x,y,z\right)\), applied along the segment joining
\(\Xi(t,x)\) and \(\Xi(t,x+he_k)\), where
\begin{equation*}
\Xi(t,x):=(t,x,v(t,x),\Psi(t,x)).
\end{equation*}
By the definition of \(C_Y\), the bound of
Proposition~\ref{prop:reg}(b), and the convexity of \(\Sc\) in the
\((y,z)\)-variables, these segments lie in \(\Sc\). The bracket thus
equals \(h\) times a linear combination of \(1\), \(\delta_h^k v\), and
\(\delta_h^k\bigl((\nabla_x v)\sigma\bigr)\), whose coefficients are the
segment averages of \(\partial_{x_k}f\), \(\partial_y f\), and
\(\nabla_z f\). For \((t,x)\in\overline{Q''}\) and \(\abs{h}\) small
enough, the segments stay in a fixed compact subset of \(\Sc\), so the
estimate \eqref{eq:holder_R2} of \ref{it:R2} applies. Since \(v\),
\(\nabla_x v\), and \((\nabla_x v)\sigma\) are locally H\"older
continuous with time exponents at least \(1/2\) for the latter two, the
averaged coefficients are bounded in
\(C^{\alpha/2,\alpha}\left(\overline{Q''}\right)\), uniformly for
\(\abs{h}\) small enough. Moreover, each difference quotient satisfies
\begin{equation*}
\delta_h^k\varphi(t,x)
=\int_0^1\partial_k\varphi(t,x+\theta he_k)\Id\theta.
\end{equation*}
It therefore inherits, uniformly in \(h\), the local H\"older regularity of
\(\partial_k\varphi\). For \(\varphi\in\{A,b\}\), this is supplied by
\ref{it:R1} and the Lipschitz continuity of \(\sigma\). For
\(\varphi\in\{v,(\nabla_x v)\sigma\}\), it follows from
\(v\in C^{1+\alpha/2,2+\alpha}_{\loc}\) together with
\begin{equation*}
\partial_k\bigl((\nabla_x v)\sigma\bigr)
=(\partial_k\nabla_x v)\,\sigma+(\nabla_x v)\,\partial_k\sigma,
\end{equation*}
where the last term uses \ref{it:R1} once more. Consequently, \(G_h\) is bounded
in \(C^{\alpha/2,\alpha}\left(\overline{Q''}\right)\), uniformly for
\(\abs{h}\) small enough.
\\~\\
\noindent\emph{Step 2: Passage to the limit and the differentiated system.}
We now convert the difference-quotient equation into genuine
regularity for \(u=\nabla_x v\). For \(\abs{h}\) small enough, the
function \(\delta_h^k v\) solves on \(Q''\) a linear parabolic equation
with H\"older coefficients and H\"older right-hand side, uniformly in
\(h\). Therefore, the interior Schauder theory for linear parabolic
equations on bounded cylinders, see Friedman
\cite[Chapter~3, Theorem~5]{friedman} and Krylov
\cite[Theorem~8.11.1]{Krylov1996}, yields a constant
\(C=C\left(Q'',Q',\alpha\right)\), independent of \(h\), such that
\begin{equation}\label{eq:dq_schauder_bound}
\norm{\delta_h^k v}_{C^{1+\alpha/2,2+\alpha}(Q')}
\leq
C
\left(
\norm{\delta_h^k v}_{C(\overline{Q''})}
+
\norm{G_h}_{C^{\alpha/2,\alpha}(\overline{Q''})}
\right).
\end{equation}
By the estimates obtained in Step~1, the right-hand side of
\eqref{eq:dq_schauder_bound} is bounded uniformly for \(\abs{h}\)
small enough, and hence there exists \(h_0>0\) such that
\begin{equation*}
\sup_{0<\abs{h}<h_0}
\norm{\delta_h^k v}_{C^{1+\alpha/2,2+\alpha}(Q')}
<\infty.
\end{equation*}

Since \(v\) is already \(C^1\) in the space variable, we know that
\(\delta_h^k v \longrightarrow \partial_k v\) locally uniformly on
\(Q'\) as \(h\to 0\). Combined with the uniform bound
\eqref{eq:dq_schauder_bound}, the Arzelà--Ascoli theorem yields
\(\delta_h^k v \longrightarrow \partial_k v\) in
\(C^{1+\beta/2,2+\beta}(Q')\) for every \(\beta\in(0,\alpha)\). In
particular, \(\partial_t\delta_h^k v\to\partial_t\partial_k v\) and
\(\partial_{ij}\delta_h^k v\to\partial_{ij}\partial_k v\) uniformly on
\(Q'\). By lower semicontinuity of the Hölder seminorms under uniform
convergence, the bound \eqref{eq:dq_schauder_bound} passes to the
limit, and
\begin{equation*}
\partial_k v\in C^{1+\alpha/2,2+\alpha}(Q').
\end{equation*}
As \(Q'\Subset Q''\Subset Q\Subset \left[0,T\right)\times \R^d\) and
\(k\) were arbitrary, we conclude that
\begin{equation*}
u=\nabla_x v\in C^{1+\alpha/2,2+\alpha}_{\loc}\left([0,T)\times \R^d;\R^d\right).
\end{equation*}

Once this regularity is available, one may pass to the limit in
\eqref{eq:dq_equation} as \(h\to 0\), which yields the differentiated
system \eqref{eq:PDE_u} in the classical sense.
\\~\\
\noindent\emph{Step 3: The structural inequality.}
We now derive \eqref{eq:struct_ineq}. By definition of \(\Pc\),
\begin{equation*}
\Pc u_k
=
\partial_t u_k
+
\sum_{i,j=1}^d A_{ij}\partial_{ij}u_k
+
\sum_{i,j=1}^d (\partial_i A_{ij})\partial_j u_k.
\end{equation*}
Combining this identity with \eqref{eq:PDE_u}, we see that \(\Pc u_k\)
is a linear combination of \(u\), \(\nabla_xu\), and the inhomogeneous term \(\partial_{x_k}f\left(t,x,v,u\sigma\right)\).

By Assumption \ref{ass:parabolic}, the
forward-coefficient terms in~\eqref{eq:PDE_u} are bounded on
\([0,T)\times\R^d\). By the definition of \(C_Y\) and the bound of
Proposition~\ref{prop:reg}(b), one has \(\Xi(t,x)\in\Sc\) for every
\((t,x)\in[0,T)\times\R^d\). Since \eqref{eq:lip_fyz} holds on \(\Sc\), the
bound
\(\abs{\partial_y f(t,x,v,u\sigma)}+\abs{\nabla_z f(t,x,v,u\sigma)}\le K_f\)
holds uniformly. For the remaining term, the structural condition \ref{it:B3} gives
\begin{equation*}
\abs{\partial_{x_k}f(t,x,v,u\sigma)}
=
\abs{\partial_{x_k}f(t,x,v,u\sigma)-\partial_{x_k}f(t,x,v,0)}
\le K_f\abs{u\sigma}
\le C\abs{u},
\end{equation*}
where we used~\eqref{eq:struct_fx_zero}, \eqref{eq:struct_fx_lip}, and
boundedness of \(\sigma\). Hence
\begin{equation*}
\abs{\Pc u_k(t,x)}
\leq
C\left( \abs{u(t,x)} + \abs{\nabla_xu(t,x)} \right),
\qquad
(t,x)\in [0,T)\times \R^d.
\end{equation*}
Summing over \(k\) yields \eqref{eq:struct_ineq}, which completes the proof.
\end{proof}

The next result complements Proposition~\ref{prop:extra_reg}. The
previous proposition delivers pointwise regularity and the structural
inequality required by the rigidity argument, but the bound on
\(\Pc u\) it provides is expressed in terms of \(u\) and \(\nabla_x u\)
themselves. For backward uniqueness, one further needs uniform control
of \(u\) and \(\nabla_x u\) on slabs of the form \([t_1,t_2]\times \R^d\)
with \(0<t_1<t_2<T\). This is what we now establish under the stronger
coefficient assumptions gathered in Assumption~\ref{ass:rigidity}.

\begin{lem}[Differentiability with respect to the initial state]\label{lem:bsde_diff}
Assume \ref{ass:rigidity}. Fix $(s,x)\in[0,T)\times\R^d$. For each
coordinate vector $e_j$, there exist adapted processes
$(U^{(j)},V^{(j)})$ such that
\begin{multline}\label{eq:bsde_diffquot_conv}
\E\left[\sup_{t\in[s,T]}
\left|\frac{Y_t^{s,x+he_j}-Y_t^{s,x}}{h}-U_t^{(j)}\right|^2\right]
\\
+
\E\left[\int_s^T
\left|\frac{Z_t^{s,x+he_j}-Z_t^{s,x}}{h}-V_t^{(j)}\right|^2\,\Id t\right]
\longrightarrow0
\qquad\text{as }h\to0.
\end{multline}
Writing
$\Theta_r=(X_r^{s,x},Y_r^{s,x},Z_r^{s,x})$ and
$J_{s,t}=\nabla_xX_t^{s,x}$, and collecting the coordinate limits into the
matrix-valued processes $U_t^{s,x}=\nabla_xY_t^{s,x}$ and
$V_t^{s,x}=\nabla_xZ_t^{s,x}$, one has
\begin{equation}\label{eq:UV_integrability}
\E\left[\sup_{t\in[s,T]}|U_t^{s,x}|^2
+\int_s^T|V_t^{s,x}|^2\,\Id t\right]<\infty,
\end{equation}
and $(U^{s,x},V^{s,x})$ is the unique adapted pair satisfying
\eqref{eq:UV_integrability} and the variational BSDE
\begin{multline*}
U_t^{s,x}
=
\nabla_xg(X_T^{s,x})J_{s,T}
-
\int_t^T V_r^{s,x}\,\Id W_r
\\
+
\int_t^T
\Bigl[
\nabla_x f(r,\Theta_r)J_{s,r}
+
\partial_y f(r,\Theta_r)U_r^{s,x}
+
\nabla_z f(r,\Theta_r)V_r^{s,x}
\Bigr] \,\Id r.
\end{multline*}
\end{lem}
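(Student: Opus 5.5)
The plan is the classical difference-quotient argument for Lipschitz BSDEs, in the spirit of Pardoux--Peng and El Karoui--Peng--Quenez. Set
\[
\Delta^hY_t:=\frac{Y_t^{s,x+he_j}-Y_t^{s,x}}{h},
\qquad
\Delta^hZ_t:=\frac{Z_t^{s,x+he_j}-Z_t^{s,x}}{h},
\qquad
\Delta^hX_t:=\frac{X_t^{s,x+he_j}-X_t^{s,x}}{h}.
\]
By Lemma~\ref{lem:jacobian_flow} and its standard proof, $\Delta^hX\to J_{s,\cdot}e_j$ in $L^q(\sup_t)$ for every $q$, with uniform $L^q$ bounds.

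The first step is to write a linear BSDE for $(\Delta^hY,\Delta^hZ)$. Subtract the two BSDEs and apply the mean value theorem along the segment $\Theta^{h,\theta}_r=\Theta_r+\theta(\Theta^h_r-\Theta_r)$. This gives
\[
\Delta^hY_t=\Gamma^h_T\,\Delta^hX_T+\int_t^T\bigl[\alpha^h_r\Delta^hX_r+\beta^h_r\Delta^hY_r+\gamma^h_r\Delta^hZ_r\bigr]\Id r-\int_t^T\Delta^hZ_r\,\Id W_r .
\]
Here $\Gamma^h_T$, $\alpha^h$, $\beta^h$, $\gamma^h$ are the $\theta$-averages of $\nabla_xg$, $\nabla_xf$, $\partial_yf$ and $\nabla_zf$ along the segment. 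By \ref{it:U1}, $\Gamma^h$ is bounded. By \ref{it:U2}, $\beta^h$ and $\gamma^h$ are bounded by $C_f$ globally; this is why the global version is needed, since the segment may leave $\Sc$. By \ref{it:U3}, $\abs{\alpha^h_r}\le C_f(\abs{Z_r}+\abs{Z^h_r})$.

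The driving term $\alpha^h\Delta^hX$ is therefore not bounded. It is, however, in $L^2(\Omega\times[s,T])$: by Cauchy--Schwarz its norm is controlled by $\E[\sup\abs{\Delta^hX}^4]^{1/2}\,\E[(\int\abs{Z}^2)^2]^{1/2}$, which is uniform in $h$ by the $L^p$ estimates of Step~1 of Proposition~\ref{prop:reg} together with the $L^p$ continuity of $x\mapsto Z^{s,x}$. The standard a priori estimate for linear BSDEs with bounded $(\beta,\gamma)$ then gives uniform $L^2$ bounds on $(\Delta^hY,\Delta^hZ)$.

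The second step is existence, uniqueness and \eqref{eq:UV_integrability} for the variational BSDE. This BSDE is linear with bounded coefficients $\partial_yf(\Theta)$ and $\nabla_zf(\Theta)$. Its source is $\nabla_xf(\Theta)J_{s,\cdot}$ with $\abs{\nabla_xf(\Theta_r)}\le C_f\abs{Z_r}$, so the source lies in $L^2(\Omega;L^1([s,T]))$, and in fact in $L^2(\Omega\times[s,T])$ by the same Hölder argument as above. The terminal value $\nabla_xg(X_T)J_{s,T}$ lies in $L^2$. Existence and uniqueness therefore follow from the standard Lipschitz BSDE theory.

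The third step is convergence. Take the difference $(\Delta^hY-U^{(j)},\Delta^hZ-V^{(j)})$. It solves a linear BSDE with the same bounded coefficients $\beta^h$, $\gamma^h$ and with remainder terms
\[
(\beta^h-\partial_yf(\Theta))U^{(j)},\qquad (\gamma^h-\nabla_zf(\Theta))V^{(j)},\qquad \alpha^h\Delta^hX-\nabla_xf(\Theta)Je_j,
\]
plus a terminal remainder. The BSDE stability estimate bounds the left side of \eqref{eq:bsde_diffquot_conv} by the squared $L^2$ norms of these remainders. The first two tend to zero by dominated convergence: the coefficient differences are bounded by $2C_f$, they tend to zero in probability because $\Theta^h\to\Theta$ in probability (from $L^2$ stability of BSDEs in the initial point) and $\partial_yf$, $\nabla_zf$ are continuous, and $U^{(j)},V^{(j)}\in L^2$. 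The terminal remainder is handled the same way, using the continuity and boundedness of $\nabla_xg$.

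The main obstacle is the $\alpha^h$ term, since $\nabla_xf$ is not bounded. I would split it as
\[
(\alpha^h-\nabla_xf(\Theta))\Delta^hX+\nabla_xf(\Theta)(\Delta^hX-Je_j).
\]
The second piece tends to zero by Hölder, using $\abs{\nabla_xf(\Theta)}\le C_f\abs{Z}$ and the $L^4$ convergence of $\Delta^hX$. For the first piece, the Lipschitz bound in $z$ from \ref{it:U3} gives
\[
\abs{\alpha^h-\nabla_xf(\Theta)}\le C_f\abs{Z^h-Z}+\abs{\nabla_xf(r,X^{h,\theta},Y^{h,\theta},Z_r)-\nabla_xf(\Theta_r)}.
\]
The first term tends to zero in $L^2$ after Hölder with $\sup\abs{\Delta^hX}\in L^4$; this needs $\E[(\int\abs{Z^h-Z}^2)^2]\to0$, which comes from the $L^p$ stability estimates. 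The second term is bounded by $2C_f\abs{Z_r}$ and tends to zero pointwise by continuity of $\nabla_xf$, so it is handled by a uniform-integrability (Vitali) argument after Hölder with $\Delta^hX$. Finally, collecting the coordinate limits for $j=1,\dots,d$ gives the matrix processes $U^{s,x}$ and $V^{s,x}$.
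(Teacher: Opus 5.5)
Your proposal is correct and follows the paper's proof essentially step for step. It uses the same mean-value linearization, the same limiting linear BSDE, and the same splitting of the unbounded $\nabla_x f$ source into a $z$-Lipschitz piece bounded by $C_f\abs{Z^h-Z}$ and an $(x,y)$-piece dominated by $2C_f\abs{Z}$. The differences are minor: you control the first piece through $L^4$ stability of $Z^{s,\cdot}$ in the initial point rather than the paper's truncation on $\{\sup\abs{\Delta_hX}\le M\}$, and you use plain continuity of the derivatives of $f$ where the paper invokes \ref{it:R2}. One small slip: the segments do stay in $\Sc$ by its convexity in $(y,z)$, so \ref{it:U2} is convenient there rather than forced.
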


\begin{proof}
We give the argument because the spatial derivative
$\nabla_xf$ is not assumed to be globally bounded, so that a direct
application of a parameter-differentiability theorem would require
additional hypotheses not used elsewhere in the paper. Fix a coordinate
vector $e_j$ and, for $h\ne0$ sufficiently small, set
\begin{equation*}
\Delta_hX:=\frac{X^{s,x+he_j}-X^{s,x}}{h},\qquad
\Delta_hY:=\frac{Y^{s,x+he_j}-Y^{s,x}}{h},\qquad
\Delta_hZ:=\frac{Z^{s,x+he_j}-Z^{s,x}}{h}.
\end{equation*}
By Lemma~\ref{lem:jacobian_flow} and the standard differentiability of the
stochastic flow,
\begin{equation}\label{eq:diffquot_X}
\E\left[\sup_{r\in[s,T]}
\left|\Delta_hX_r-J_{s,r}e_j\right|^q\right]
\longrightarrow0
\qquad\text{for every }q\ge2.
\end{equation}
Let
$\Theta_r^h=(X_r^{s,x+he_j},Y_r^{s,x+he_j},Z_r^{s,x+he_j})$.
Since $f$ is $C^1$ in $(x,y,z)$, the mean-value formula gives
\begin{equation}\label{eq:diffquot_bsde}
\Delta_hY_t
=
\Delta_hg
+
\int_t^T
\bigl(F_r^h+a_r^h\Delta_hY_r+c_r^h\Delta_hZ_r\bigr)\,\Id r
-
\int_t^T\Delta_hZ_r\,\Id W_r,
\end{equation}
where
\begin{align*}
\Delta_hg
&=
\int_0^1
\nabla_xg\!\left(X_T^{s,x}+\theta(X_T^{s,x+he_j}-X_T^{s,x})\right)
\Delta_hX_T\,\Id\theta,\\
F_r^h
&=
\int_0^1
\nabla_xf\!\left(r,\Theta_r+\theta(\Theta_r^h-\Theta_r)\right)
\Delta_hX_r\,\Id\theta,\\
a_r^h
&=
\int_0^1
\partial_yf\!\left(r,\Theta_r+\theta(\Theta_r^h-\Theta_r)\right)\,\Id\theta,\\
c_r^h
&=
\int_0^1
\nabla_zf\!\left(r,\Theta_r+\theta(\Theta_r^h-\Theta_r)\right)\,\Id\theta.
\end{align*}
Here and below the notation
$\Theta_r+\theta(\Theta_r^h-\Theta_r)$ is understood componentwise.
Assumption~\ref{ass:rigidity}\ref{it:U2} gives
$|a_r^h|+|c_r^h|\le C_f$. Moreover, by
Assumption~\ref{ass:rigidity}\ref{it:U3},
\begin{equation}\label{eq:Fh_bound}
|F_r^h|
\le
C_f\bigl(|Z_r^{s,x}|+|Z_r^{s,x+he_j}|\bigr)|\Delta_hX_r|.
\end{equation}

We next record explicitly how the dependence of the generator on the
forward state is handled. Assumption~\ref{ass:rigidity}\ref{it:U3}
implies, for every $(r,x,y,z)$,
\begin{equation*}
|\nabla_x f(r,x,y,z)|
\le
|\nabla_x f(r,x,y,z)-\nabla_x f(r,x,y,0)|
\le C_f|z|.
\end{equation*}
Consequently, the fundamental theorem of calculus along the segment
joining $x$ and $x'$ gives
\begin{equation}\label{eq:f_x_stability}
|f(r,x',y,z)-f(r,x,y,z)|
\le C_f|z|\,|x'-x|.
\end{equation}
Together with the global Lipschitz bound in $(y,z)$ from
\ref{it:U2} and the boundedness of $\nabla_x g$ from \ref{it:U1}, the
standard $L^2$ stability estimate for Lipschitz BSDEs therefore yields
\begin{align*}
&\E\left[\sup_{r\in[s,T]}
\left|Y_r^{s,x+he_j}-Y_r^{s,x}\right|^2\right]
+
\E\left[\int_s^T
\left|Z_r^{s,x+he_j}-Z_r^{s,x}\right|^2\,\Id r\right]
\\
&\qquad\qquad\qquad\qquad\le
C\,\E\left|X_T^{s,x+he_j}-X_T^{s,x}\right|^2
+
C\,\E\left(
\int_s^T
|Z_r^{s,x}|\,
|X_r^{s,x+he_j}-X_r^{s,x}|\,\Id r
\right)^2.
\end{align*}
Since
$X^{s,x+he_j}-X^{s,x}=h\Delta_hX$, the first term on the right tends
to zero by \eqref{eq:diffquot_X}. For the second one, Cauchy--Schwarz
gives
\begin{multline*}
\E\left(
\int_s^T
|Z_r^{s,x}|\,
|X_r^{s,x+he_j}-X_r^{s,x}|\,\Id r
\right)^2
\\
\le
T|h|^2
\left(
\E\sup_{r\in[s,T]}|\Delta_hX_r|^4
\right)^{1/2}
\left(
\E\left(\int_s^T|Z_r^{s,x}|^2\,\Id r\right)^2
\right)^{1/2},
\end{multline*}
which also tends to zero by \eqref{eq:diffquot_X} with $q=4$ and the
$L^4$ estimate for $Z$ established in Proposition~\ref{prop:reg}.
Hence
\begin{equation}\label{eq:YZ_state_stability}
\E\left[\sup_{r\in[s,T]}
\left|Y_r^{s,x+he_j}-Y_r^{s,x}\right|^2\right]
+
\E\left[\int_s^T
\left|Z_r^{s,x+he_j}-Z_r^{s,x}\right|^2\,\Id r\right]
\longrightarrow0.
\end{equation}
In addition, the $L^q$ estimate of \cite[Theorem~4.2]{BDHPS}, the flow
moment bounds \eqref{eq:flow_moments}, and \eqref{eq:Fh_bound} show that,
for $h$ in a fixed neighbourhood of zero, the terminal variables
$\Delta_hg$ and the sources $F^h$ are uniformly square integrable. Hence
the linear BSDE estimate applied to \eqref{eq:diffquot_bsde} gives
\begin{equation}\label{eq:diffquot_uniform}
\sup_{0<|h|\le h_0}
\left(
\E\sup_{r\in[s,T]}|\Delta_hY_r|^2
+
\E\int_s^T|\Delta_hZ_r|^2\,\Id r
\right)<\infty.
\end{equation}

Let $(U^{(j)},V^{(j)})$ denote the unique solution of the linear BSDE
obtained by replacing in \eqref{eq:diffquot_bsde} $\Delta_hg,\ F^h,\
a^h,\ c^h$ respectively by
\begin{equation*}
\nabla_xg(X_T^{s,x})J_{s,T}e_j,
\quad
\nabla_xf(r,\Theta_r)J_{s,r}e_j,
\quad
\partial_yf(r,\Theta_r),
\quad
\nabla_zf(r,\Theta_r).
\end{equation*}
This limiting linear BSDE admits a unique adapted solution satisfying
\begin{equation*}
\E\left[\sup_{r\in[s,T]}|U_r^{(j)}|^2
+\int_s^T|V_r^{(j)}|^2\,\Id r\right]<\infty.
\end{equation*}
Indeed, $\partial_yf$ and $\nabla_zf$ are bounded, while
$|\nabla_xf(r,\Theta_r)|\le C_f|Z_r^{s,x}|$ by
\ref{it:U3}, and the latter source is square integrable after multiplication
by $J_{s,r}$ by \eqref{eq:flow_moments} and the $L^q$ estimates for $Z$.
We now justify the passage to the limit in the coefficients and in the
source. This is the only point where the local character of
Assumption~\ref{ass:extra_reg}\ref{it:R2} has to be taken into account.
Let
\begin{equation*}
\mu(\Id r,\Id\omega):=\Id r\,\mathbb{P}(\Id\omega)
\end{equation*}
be the product measure on $[s,T]\times\Omega$. The convergence
\eqref{eq:diffquot_X} and the preceding BSDE stability imply
\begin{equation}\label{eq:Theta_h_measure}
(X^{s,x+he_j},Y^{s,x+he_j},Z^{s,x+he_j})
\longrightarrow
(X^{s,x},Y^{s,x},Z^{s,x})
\qquad\text{in }\mu\text{-measure}.
\end{equation}
For $R>0$, introduce
\begin{equation*}
\Gamma_R^h
:=
\bigl\{(r,\omega):
|X_r^{s,x}|\vee |X_r^{s,x+he_j}|
\vee |Y_r^{s,x}|\vee |Y_r^{s,x+he_j}|
\vee |Z_r^{s,x}|\vee |Z_r^{s,x+he_j}|
\le R\bigr\}.
\end{equation*}
On $\Gamma_R^h$, every segment joining $\Theta_r$ and $\Theta_r^h$
lies in a fixed compact subset $K_R\subset\Sc$: the $x$--coordinates
remain in a fixed ball, while the $y$-- and $z$--coordinates remain in
the solution strip by convexity. Hence \eqref{eq:holder_R2} applies on
$K_R$. Moreover, the flow moment bounds and the $L^q$ estimates for the
BSDE, uniformly for $x+he_j$ with $|h|\le h_0$, give
\begin{equation}\label{eq:Gamma_tail}
\lim_{R\to\infty}
\sup_{0<|h|\le h_0}
\mu\bigl((\Gamma_R^h)^c\bigr)=0.
\end{equation}
It follows from \eqref{eq:Theta_h_measure}--\eqref{eq:Gamma_tail} and
\ref{it:R2} that the segment averages defining $a^h$ and $c^h$ converge
in $\mu$-measure to
$\partial_yf(\cdot,\Theta)$ and $\nabla_zf(\cdot,\Theta)$, respectively.
Since these coefficients are uniformly bounded by
Assumption~\ref{ass:rigidity}\ref{it:U2}, we in fact have
\begin{equation}\label{eq:ac_convergence}
a^h\longrightarrow \partial_y f(\cdot,\Theta)
\quad\text{and}\quad
c^h\longrightarrow \nabla_z f(\cdot,\Theta)
\qquad\text{in }L^p(\mu)
\end{equation}
for every finite $p\ge1$.

The source term requires a little more care because $\nabla_xf$ is not
bounded. Set
\begin{equation*}
\overline{\nabla_x f}^{\,h}_r
:=
\int_0^1
\nabla_xf\!\left(r,\Theta_r+\theta(\Theta_r^h-\Theta_r)\right)\,\Id\theta,
\qquad
F_r^h=\overline{\nabla_x f}^{\,h}_r\,\Delta_hX_r.
\end{equation*}
Write
\begin{align*}
X_r^{h,\theta}&:=X_r^{s,x}+\theta(X_r^{s,x+he_j}-X_r^{s,x}),\\
Y_r^{h,\theta}&:=Y_r^{s,x}+\theta(Y_r^{s,x+he_j}-Y_r^{s,x}),\\
Z_r^{h,\theta}&:=Z_r^{s,x}+\theta(Z_r^{s,x+he_j}-Z_r^{s,x}).
\end{align*}
Then
\begin{equation*}
\overline{\nabla_x f}^{\,h}_r-\nabla_xf(r,\Theta_r)=A_r^h+B_r^h,
\end{equation*}
where
\begin{align*}
A_r^h
&:=\int_0^1\Bigl[
\nabla_xf(r,X_r^{h,\theta},Y_r^{h,\theta},Z_r^{h,\theta})
-\nabla_xf(r,X_r^{h,\theta},Y_r^{h,\theta},Z_r^{s,x})
\Bigr] \,\Id\theta,\\
B_r^h
&:=\int_0^1\Bigl[
\nabla_xf(r,X_r^{h,\theta},Y_r^{h,\theta},Z_r^{s,x})
-\nabla_xf(r,X_r^{s,x},Y_r^{s,x},Z_r^{s,x})
\Bigr] \,\Id\theta.
\end{align*}
By Assumption~\ref{ass:rigidity}\ref{it:U3},
\begin{equation}\label{eq:Ah_bound}
|A_r^h|
\le C_f|Z_r^{s,x+he_j}-Z_r^{s,x}|,
\end{equation}
so that $A^h\to0$ in $L^2(\mu)$. For $B^h$, fix $R>0$ and restrict to
\begin{equation*}
\widetilde\Gamma_R^h
:=
\bigl\{(r,\omega):
|X_r^{s,x}|\vee|X_r^{s,x+he_j}|
\vee|Y_r^{s,x}|\vee|Y_r^{s,x+he_j}|
\vee|Z_r^{s,x}|\le R\bigr\}.
\end{equation*}
On this set the points entering $B^h$ remain in a fixed compact subset
of $\Sc$, and \ref{it:R2} gives
\begin{equation*}
|B_r^h|
\le C_R\left(
|X_r^{s,x+he_j}-X_r^{s,x}|^{\alpha}
+|Y_r^{s,x+he_j}-Y_r^{s,x}|^{\alpha}
\right).
\end{equation*}
Hence $B^h\mathbf 1_{\widetilde\Gamma_R^h}\to0$ in $L^2(\mu)$ by
\eqref{eq:Theta_h_measure}. On the complement, the structural condition
\ref{it:U3} and $\nabla_xf(t,x,y,0)=0$ give the deterministic bound
\begin{equation}\label{eq:Bh_bound}
|B_r^h|\le 2C_f|Z_r^{s,x}|.
\end{equation}
The higher-moment estimates for $X$ and $Y$, together with the
$L^q$ estimates for $Z$, imply by H\"older's inequality that the
$|Z|^2$--weighted contribution of
$(\widetilde\Gamma_R^h)^c$ tends to zero as first $h\to0$ and then
$R\to\infty$. Consequently,
\begin{equation}\label{eq:fxbar_convergence}
\overline{\nabla_x f}^{\,h}
\longrightarrow
\nabla_xf(\cdot,\Theta)
\qquad\text{in }L^2(\mu).
\end{equation}

We can now pass to the limit in $F^h$. Using
\begin{align*}
F_r^h-
\nabla_xf(r,\Theta_r)J_{s,r}e_j
&=
\left(\overline{\nabla_x f}^{\,h}_r-
\nabla_xf(r,\Theta_r)\right)\Delta_hX_r+
\nabla_xf(r,\Theta_r)
\left(\Delta_hX_r-J_{s,r}e_j\right),
\end{align*}
the second term tends to zero in $L^2(\mu)$ because
$|\nabla_xf(r,\Theta_r)|\le C_f|Z_r^{s,x}|$, the process $Z^{s,x}$
satisfies the $L^q$ estimates, and \eqref{eq:diffquot_X} holds for every
$q\ge2$. For the first term, truncate according to
$\sup_{r\in[s,T]}|\Delta_hX_r|\le M$. On this event,
\eqref{eq:fxbar_convergence} gives convergence to zero in $L^2(\mu)$.
On its complement, \ref{it:U3}, the uniform moment bounds for
$\Delta_hX$, and the $L^q$ estimates for $Z^{s,x}$ and
$Z^{s,x+he_j}$ make the squared products uniformly integrable. Letting
first $h\to0$, then $M\to\infty$, yields
\begin{equation}\label{eq:Fh_convergence}
\E\int_s^T
\left|F_r^h-
\nabla_xf(r,\Theta_r)J_{s,r}e_j\right|^2\,\Id r
\longrightarrow0.
\end{equation}

The terminal term is handled in the same way, but more simply. Since
$\nabla_xg$ is bounded and continuous by \ref{it:U1}, localization on
$\{|X_T^{s,x}|\vee|X_T^{s,x+he_j}|\le R\}$, followed by removal of the
localization using the flow moment bounds, gives
\begin{equation}\label{eq:Deltag_convergence}
\Delta_hg
\longrightarrow
\nabla_xg(X_T^{s,x})J_{s,T}e_j
\qquad\text{in }L^2(\Omega).
\end{equation}
Finally, since the differences in \eqref{eq:ac_convergence} are
uniformly bounded and $(U^{(j)},V^{(j)})$ satisfies the
square-integrability estimate above, we have
\begin{align*}
\E\int_s^T
|a_r^h-\partial_yf(r,\Theta_r)|^2|U_r^{(j)}|^2\,\Id r
&\longrightarrow0,
\end{align*}
as well as
\begin{align*}
\E\int_s^T
|c_r^h-\nabla_zf(r,\Theta_r)|^2|V_r^{(j)}|^2\,\Id r
&\longrightarrow0.
\end{align*}
Thus \eqref{eq:Fh_convergence}, \eqref{eq:Deltag_convergence}, and the
last two limits are precisely the data convergence required by the
standard $L^2$ stability estimate for linear BSDEs. Therefore
\begin{equation*}
\E\left[\sup_{r\in[s,T]}|\Delta_hY_r-U_r^{(j)}|^2\right]
+
\E\left[\int_s^T|\Delta_hZ_r-V_r^{(j)}|^2\,\Id r\right]
\longrightarrow0.
\end{equation*}
This is precisely \eqref{eq:bsde_diffquot_conv}. Doing this for
$j=1,\ldots,d$ proves differentiability with respect to the initial state
in the stated sense. Collecting the coordinate derivatives gives the
asserted matrix-valued pair $(U^{s,x},V^{s,x})$, which satisfies
\eqref{eq:UV_integrability} and the displayed variational BSDE.
\end{proof}

\begin{prop}[Uniform bound on the spatial gradient]\label{prop:u_bound}
Assume \ref{ass:rigidity}. Let $(v,w)$ be the Markovian solution fixed in Assumption~\ref{ass:backward} and set $u := \nabla_x v$. Then there exists a constant $C>0$, depending only on the
structural data, such that
\begin{equation}\label{eq:u_bound}
\abs{u(t,x)}
\le
e^{C(\T-t)/2}\norm{\nabla_x g}_\infty,
\qquad
(t,x)\in [0,\T)\times \R^d.
\end{equation}
\end{prop}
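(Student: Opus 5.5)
The plan is to represent $u(s,x)=\nabla_x v(s,x)$ through the variational BSDE of Lemma~\ref{lem:bsde_diff} and then to bound it by a linear-BSDE argument in which the forcing term coming from $\nabla_x f$ is absorbed using the structural condition \ref{it:U3}.

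First we identify $u$. Fix $(s,x)\in[0,T)\times\R^d$. Since $Y_s^{s,y}=v(s,y)$ is deterministic for every $y$, the convergence \eqref{eq:bsde_diffquot_conv} at $t=s$ gives $\nabla_x v(s,x)=U_s^{s,x}$. Next we rewrite the variational BSDE in a homogeneous form. By Proposition~\ref{prop:reg}(b), $Z_r=u(r,X_r)\sigma(r,X_r)$, and by the Markov property $U_r^{s,x}=u(r,X_r^{s,x})J_{s,r}$; the latter is obtained by differentiating $Y_r^{s,y}=v(r,X_r^{s,y})$ in $y$ and using the chain rule with $v\in C^{1,2}_{\loc}$. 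Hence $Z_r=U_rJ_{s,r}^{-1}\sigma(r,X_r)$. Structural condition \ref{it:U3} then gives
\begin{equation*}
\abs{\nabla_x f(r,\Theta_r)J_{s,r}}\le C_f\abs{Z_r}\norm{J_{s,r}}\le C\abs{U_r}\norm{J_{s,r}^{-1}}\norm{J_{s,r}}.
\end{equation*}
The factor $\norm{J^{-1}}\norm{J}$ is random and unbounded, which is unpleasant. To avoid it, we work instead with the row vector $P_r:=U_rJ_{s,r}^{-1}=u(r,X_r)$, whose terminal value is $\nabla_x g(X_T)$. By Itô's formula applied to $U_rK_{s,r}$, using \eqref{eq:inverse_jacobian_sde}, $P$ solves a BSDE of the form
\begin{equation*}
\Id P_r=-\bigl[\nabla_x f(r,\Theta_r)+\partial_y f\,P_r+\beta_r(P_r,Q_r)\bigr]\Id r+Q_r\,\Id W_r,
\end{equation*}
where $\beta_r$ is linear in $(P,Q)$ with coefficients built from $\nabla_z f$, $\nabla_x b$ and $\nabla_x\sigma$, all bounded by \ref{it:U2} and Assumption~\ref{ass:flow}. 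The forcing now satisfies $\abs{\nabla_x f(r,\Theta_r)}\le C_f\abs{u\sigma}\le C\abs{P_r}$.

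Then we apply Itô to $\abs{P_r}^2$. The driver is bounded by $C(\abs{P}+\abs{Q})$, so Young's inequality gives
\begin{equation*}
\Id\abs{P_r}^2\ge -C\abs{P_r}^2\Id r+\text{martingale},
\end{equation*}
after absorbing the $\abs{Q}$ terms into $\abs{Q}^2\Id r$. Consequently $e^{Cr}\abs{P_r}^2$ is a submartingale, after a localization justified by the integrability \eqref{eq:UV_integrability} and the flow moments \eqref{eq:flow_moments}. Taking expectations yields $\abs{u(s,x)}^2=\abs{P_s}^2\le e^{C(T-s)}\E\abs{\nabla_x g(X_T)}^2\le e^{C(T-s)}\norm{\nabla_x g}_\infty^2$, which is \eqref{eq:u_bound}.

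The main obstacle is the rigorous identification $U_r=u(r,X_r)J_{s,r}$ and the Itô computation for $P$. These require $u$ to be regular enough, and we already know $u\in C^{1,2}_{\loc}$ by Proposition~\ref{prop:extra_reg}. They also require $P,Q$ to be integrable enough for the stochastic integral to be a true martingale. If the latter fails, we localize with exit times of balls and pass to the limit by Fatou's lemma, using the continuity of $u$ up to $T$ in the limiting sense provided by $U_T=\nabla_x g(X_T)J_{s,T}$.
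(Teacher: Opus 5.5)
\textbf{Where your route differs.} You derive the dynamics of $P_r=u(r,X_r^{s,x})$ by applying the product rule to $U_rK_{s,r}$. The paper instead applies It\^o's formula to $u(t,X_t^{s,x})$ and substitutes the differentiated system \eqref{eq:PDE_u}. Your computation is correct. Set $Q^\ell_r:=V^\ell_rK_{s,r}-P_r\nabla_x\sigma_\ell(r,X_r^{s,x})$, with $V^\ell$ the $\ell$-th row of $V$. Then the terms in $(\nabla_x\sigma_\ell)^2$ cancel, and the driver becomes $\nabla_xf(r,\Theta_r)+\partial_yf\,P_r+P_r\nabla_xb+\sum_\ell\partial_{z_\ell}f\,(Q^\ell_r+P_r\nabla_x\sigma_\ell)+\sum_\ell Q^\ell_r\nabla_x\sigma_\ell$. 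This is linear with bounded coefficients apart from the forcing, which you correctly bound by $C_f\norm{\sigma}_\infty\abs{P_r}$. Your route needs only $v(r,\cdot)\in C^1$ rather than $u\in C^{1,2}_{\loc}$. Moreover, $P$ is a continuous semimartingale on the closed interval $[s,T]$ with $P_T=\nabla_xg(X_T^{s,x})$. So the paper's truncation at $T-\varepsilon$, its terminal-continuity step and its $\varepsilon\to0$ limit become unnecessary.

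\textbf{The gap: removing the localization.} This is the one delicate point in this kind of submartingale argument, and your justification does not work. From $e^{Cs}\abs{P_s}^2\le\E\bigl[e^{C\tau_n}\abs{P_{\tau_n}}^2\bigr]$ you may let $n\to\infty$ only if $\{\abs{P_{\tau_n}}^2\}_n$ is uniformly integrable. Fatou's lemma gives $\E\bigl[e^{CT}\abs{P_T}^2\bigr]\le\liminf_n\E\bigl[e^{C\tau_n}\abs{P_{\tau_n}}^2\bigr]$, which is the wrong direction. Indeed, a nonnegative strict local martingale $S$ is a local submartingale with $\E S_T<\E S_s$.

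Nor do \eqref{eq:UV_integrability} and \eqref{eq:flow_moments} give the required domination. They control only $\E\sup_r\abs{U_r}^2$. H\"older then yields $\sup_r\abs{P_r}\le\sup_r\abs{U_r}\,\sup_r\norm{K_{s,r}}\in L^p$ for every $p<2$, but not for $p=2$.

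\textbf{How to close it.} The missing ingredient is a higher moment of $U$. The paper obtains $\E\sup_r\abs{U_r}^q<\infty$ for all $q\ge2$ by applying \cite[Theorem~4.2]{BDHPS} to the variational BSDE. It uses $\abs{\nabla_xf(r,\Theta_r)J_{s,r}}\le C_f\abs{Z_r}\norm{J_{s,r}}$ together with the $L^q$ bounds on $Z$ and $J_{s,\cdot}$. Then $\E\sup_r\abs{P_r}^2\le\bigl(\E\sup_r\abs{U_r}^4\bigr)^{1/2}\bigl(\E\sup_r\norm{K_{s,r}}^4\bigr)^{1/2}<\infty$, and dominated convergence completes your argument. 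Alternatively, write the forcing as $P_r\Gamma_r$ with $\Gamma$ bounded and use the adjoint representation of the resulting linear BSDE. For that route, integrability of $\sup_r\abs{P_r}$ in some $L^p$ with $p>1$ already suffices.
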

 
\begin{proof}
Fix $(s,x)\in [0,\T)\times\R^d$. By Lemma~\ref{lem:jacobian_flow} the
Jacobian flow $J_{s,t}:=\nabla_x X^{s,x}_t$ of \eqref{eq:SDE} is invertible on $[s,\T]$ and satisfies \eqref{eq:flow_moments}.
 
\medskip
\noindent\emph{Step 1: Variational FBSDE.}
Set $U^{s,x}_t := \nabla_x Y^{s,x}_t$ and $V^{s,x}_t := \nabla_x Z^{s,x}_t$.
By Lemma~\ref{lem:bsde_diff}, these derivatives exist, satisfy
\eqref{eq:UV_integrability}, and $(U^{s,x},V^{s,x})$ is the unique
adapted pair with this integrability property that solves the linear FBSDE
\begin{equation}\label{eq:var_FBSDE}
U^{s,x}_t
=
\nabla_x g(X^{s,x}_\T)\,J_{s,\T}
+ \int_t^\T \mathcal G_r\Id r
- \int_t^\T V^{s,x}_r\Id W_r,
\qquad t\in [s,\T],
\end{equation}
with driver
\begin{equation*}
\mathcal G_r
=
\nabla_x f(r,\Xi_r)\,J_{s,r}
+ \partial_y f(r,\Xi_r)\,U^{s,x}_r
+ \nabla_z f(r,\Xi_r)\,V^{s,x}_r,
\end{equation*}
where $\Xi_r$ denotes the forward state together with the Markovian
solution along it,
\begin{equation*}
\Xi_r
:=
\left(
X^{s,x}_r,\,
v\left(r,X^{s,x}_r\right),\,
u\left(r,X^{s,x}_r\right)\sigma\left(r,X^{s,x}_r\right)
\right).
\end{equation*}
By Definition~\ref{defn:markov_sol} and Proposition~\ref{prop:reg}(b), the
second and third components of \(\Xi_r\) coincide with \(Y_r^{s,x}\) and,
for \(\Id r\otimes\Id\mathbb{P}\)-almost every \((r,\omega)\), with
\(Z_r^{s,x}\). Therefore, the driver \(\mathcal G\) is exactly the variational driver obtained in Lemma~\ref{lem:bsde_diff}. By Assumption~\ref{ass:rigidity}\ref{it:U2} the driver is Lipschitz in
$(U,V)$ with constant $C_f$. By
Assumption~\ref{ass:rigidity}\ref{it:U3},
$\abs{\nabla_x f(r,\Xi_r)}\le C_f\abs{Z^{s,x}_r}$, and the standard
$L^q$ estimate for Markovian BSDEs with polynomial-growth data
\cite[Theorem~4.2]{BDHPS} yields
$\E\bigl(\int_s^\T\abs{Z^{s,x}_r}^2\Id r\bigr)^{q/2}<\infty$ for every
$q\ge 2$. Combined with \eqref{eq:flow_moments}, both the source
$\mathcal G_r - \partial_y f(r,\Xi_r)U^{s,x}_r - \nabla_z f(r,\Xi_r)V^{s,x}_r$
and the terminal $\nabla_x g(X^{s,x}_\T)\,J_{s,\T}$ belong to $L^q$ for
every $q\ge 2$, using Assumption~\ref{ass:rigidity}\ref{it:U1} for the
terminal term. Applying \cite[Theorem~4.2]{BDHPS} once more to
\eqref{eq:var_FBSDE},
\begin{equation}\label{eq:UV_moments}
\E\sup_{t\in [s,\T]} \abs{U^{s,x}_t}^q
< \infty
\qquad\text{for every } q\ge 2.
\end{equation}

By Proposition~\ref{prop:reg}, $Y^{s,x}_t = v(t,X^{s,x}_t)$ on $[s,\T)$
with $v\in C^{1+\alpha/2,2+\alpha}_\loc$. The chain rule, justified by
the $C^1$ regularity of the flow, gives
\begin{equation}\label{eq:identification}
U^{s,x}_t = u(t,X^{s,x}_t)\,J_{s,t},
\qquad t\in [s,\T),\ \mathbb{P}\text{-a.s.}
\end{equation}
 
\noindent\emph{Step 2: Continuity at the terminal time.}
Being the first component of the solution of \eqref{eq:var_FBSDE},
$U^{s,x}$ has a.s.\ continuous sample paths on $[s,\T]$, and
\begin{equation*}
U^{s,x}_t
\xrightarrow{t\to\T^-}
U^{s,x}_\T
=
\nabla_x g(X^{s,x}_\T)\,J_{s,\T}
\qquad \mathbb{P}\text{-a.s.}
\end{equation*}
By Lemma~\ref{lem:jacobian_flow}, $J_{s,t}^{-1}\to J_{s,\T}^{-1}$ a.s.\ as
$t\to\T^-$. Combined with \eqref{eq:identification},
\begin{equation}\label{eq:path_along}
u(t,X^{s,x}_t)
=
U^{s,x}_t\,J_{s,t}^{-1}
\xrightarrow{t\to\T^-}
\nabla_x g(X^{s,x}_\T)
\qquad \mathbb{P}\text{-a.s.}
\end{equation}
 
\noindent\emph{Step 3: Submartingale on truncated intervals.}
Set, for $t\in[s,\T)$,
\begin{equation*}
\bar Y_t := u(t,X^{s,x}_t),
\qquad
\bar Z_t^{k\ell} := \sum_i \sigma_{i\ell}(t,X^{s,x}_t)\,\partial_i
u_k(t,X^{s,x}_t).
\end{equation*}
Both processes are well defined by Proposition~\ref{prop:extra_reg}. By \eqref{eq:identification},
$\bar Y_r = U^{s,x}_r J_{s,r}^{-1}$ on $[s,\T)$. Together with
\eqref{eq:UV_moments}, \eqref{eq:flow_moments}, and Cauchy--Schwarz,
\begin{equation}\label{eq:Ybar_moments}
\E\sup_{r\in [s,\T)} \abs{\bar Y_r}^q
\le
\Bigl(\E\sup_{t\in[s,\T]}\abs{U^{s,x}_t}^{2q}\Bigr)^{1/2}
\Bigl(\E\sup_{t\in[s,\T]}\norm{J_{s,t}^{-1}}^{2q}\Bigr)^{1/2}
< \infty,
\qquad q\ge 2.
\end{equation}
Applying It\^o's formula componentwise
to $u_k(t,X^{s,x}_t)$ and substituting \eqref{eq:PDE_u} yields, for each
$k$,
\begin{equation*}
\Id\bar Y_t^k = -G_t^k\Id t + \sum_\ell \bar Z_t^{k\ell}\Id W_t^\ell,
\qquad t\in [s,\T),
\end{equation*}
with
\begin{align*}
G_t^k
=\ &\sum_i (\partial_k b_i)\bar Y_t^i
+ \sum_{i,j} (\partial_k A_{ij})\,\partial_i u_j(t,X^{s,x}_t)
+ \partial_{x_k}f(t,\Xi_t)
+ \partial_y f(t,\Xi_t)\bar Y_t^k
\\
&+ \sum_\ell \partial_{z_\ell}f(t,\Xi_t)\!
\left[\,\bar Z_t^{k\ell}
+ \sum_i \bar Y_t^i\,\partial_k \sigma_{i\ell}(t,X^{s,x}_t) \right].
\end{align*}
By Assumption~\ref{ass:parabolic}\ref{it:P3} the coefficient $\sigma$ is uniformly
elliptic with $\norm{\sigma^{-1}}_\infty \le \lambda^{-1/2}$, and
$\partial_i u_k = \sum_\ell (\sigma^{-1})_{\ell i}\,\bar Z^{k\ell}$
gives $\abs{\partial_i u_k}\le \lambda^{-1/2}\abs{\bar Z}$. Since
$A$ is given by \eqref{eq:A_def}, the bounds on $\sigma$
(\ref{ass:parabolic}\ref{it:P1}) and on $\nabla_x b,\nabla_x\sigma$
(Assumption \ref{ass:flow}) render $\nabla_x b$, $\nabla_x A$,
$\nabla_x\sigma$ bounded. With \ref{ass:rigidity}\ref{it:U2}
(boundedness of $\partial_y f$ and $\nabla_z f$) and
\ref{ass:rigidity}\ref{it:U3}---which, the third component of $\Xi_t$
being $u(t,X^{s,x}_t)\,\sigma(t,X^{s,x}_t)$, gives
$\abs{\partial_{x_k}f(t,\Xi_t)} \le C_f\abs{u(t,X^{s,x}_t)\,\sigma(t,X^{s,x}_t)}
\le C_f\norm{\sigma}_\infty\abs{\bar Y_t}$ pointwise on $[s,\T)$, with
$C_f\norm{\sigma}_\infty<\infty$ by
\ref{ass:parabolic}\ref{it:P1}---there exists a structural
$C^*>0$ such that
\begin{equation}\label{eq:G_bound}
\abs{G_t}
\le
C^*\bigl(\abs{\bar Y_t}+\abs{\bar Z_t}\bigr),
\qquad t\in [s,\T).
\end{equation}
It\^o's formula on $\abs{\bar Y_t}^2$ gives
\begin{equation*}
\Id\abs{\bar Y_t}^2
=
-2\langle\bar Y_t, G_t\rangle\Id t
+ \abs{\bar Z_t}^2\Id t
+ 2\langle\bar Y_t, \bar Z_t\Id W_t\rangle.
\end{equation*}
Using \eqref{eq:G_bound} and Young's
$2C^*\abs{\bar Y}\abs{\bar Z}\le (C^*)^2\abs{\bar Y}^2 + \abs{\bar
Z}^2$,
\begin{equation*}
\Id\abs{\bar Y_t}^2
\ge
-C\abs{\bar Y_t}^2\Id t
+ 2\langle\bar Y_t,\bar Z_t\Id W_t\rangle,
\qquad
C := 2C^* + (C^*)^2,
\end{equation*}
so that $e^{Ct}\abs{\bar Y_t}^2$ is a non-negative continuous local
submartingale on $[s,\T)$. Fix $\varepsilon\in (0,\T-s)$ and let
$\tau_n := \inf\{t\ge s : \abs{\bar Y_t}+\abs{\bar Z_t}\ge n\}
\wedge (\T-\varepsilon)$. On $[s,\tau_n]$ the integrand
$\langle\bar Y,\bar Z\rangle$ is bounded, so the stopped process
$e^{C(t\wedge\tau_n)}\abs{\bar Y_{t\wedge\tau_n}}^2$ is a true
submartingale on $[s,\T-\varepsilon]$, and for every
$t\in [s,\T-\varepsilon]$,
\begin{equation}\label{eq:stopped_submart}
\E_t\!\left[e^{C((\T-\varepsilon)\wedge\tau_n)}
\abs{\bar Y_{(\T-\varepsilon)\wedge\tau_n}}^2\right]
\ge
e^{C(t\wedge\tau_n)}\abs{\bar Y_{t\wedge\tau_n}}^2.
\end{equation}
Now $u$ and $\nabla_x u$ are continuous on $[s,\T-\varepsilon]\times\R^d$
by Proposition~\ref{prop:extra_reg}, and the random set
$\{X^{s,x}_r : r\in [s,\T-\varepsilon]\}$ is a.s.\ compact, so
$\sup_{r\in [s,\T-\varepsilon]}(\abs{\bar Y_r}+\abs{\bar Z_r})<\infty$
a.s., whence $\tau_n\to \T-\varepsilon$ a.s.\ as $n\to\infty$. Passing
to the limit in \eqref{eq:stopped_submart} via dominated convergence,
with the $L^1$ dominator $e^{C\T}\sup_{r\in [s,\T)}\abs{\bar Y_r}^2$
furnished by \eqref{eq:Ybar_moments}, yields
\begin{equation}\label{eq:submart}
e^{Ct}\abs{\bar Y_t}^2
\le
\E_t\!\left[e^{C(\T-\varepsilon)}\abs{\bar Y_{\T-\varepsilon}}^2\right],
\qquad t\in [s,\T-\varepsilon].
\end{equation}
 
\noindent\emph{Step 4: Limit $\varepsilon\to 0$.}
By \eqref{eq:Ybar_moments} with $q=4$,
$\sup_{r\in [s,\T)}\E\abs{\bar Y_r}^4<\infty$, so
$\{\abs{\bar Y_r}^2\}_{r\in [s,\T)}$ is uniformly integrable and
\eqref{eq:path_along} upgrades to convergence in $L^2(\mathbb{P})$. Passing to
the limit $\varepsilon\to 0$ in \eqref{eq:submart},
\begin{equation*}
e^{Ct}\abs{\bar Y_t}^2
\le
e^{C\T}\,\E_t\!\left[\abs{\nabla_x g(X^{s,x}_\T)}^2\right]
\le
e^{C\T}\norm{\nabla_x g}_\infty^2
\qquad \mathbb{P}\text{-a.s.,}
\end{equation*}
for every $t\in [s,\T)$. The right-hand side is finite by
Assumption~\ref{ass:rigidity}\ref{it:U1}. Specialising to $t=s$, where
$\bar Y_s = u(s,x)$,
\begin{equation*}
\abs{u(s,x)}^2
\le
e^{C(\T-s)}\norm{\nabla_x g}_\infty^2.
\end{equation*}
The point $(s,x)\in [0,\T)\times\R^d$ being arbitrary,
\eqref{eq:u_bound} follows.
\end{proof}

\begin{prop}[Uniform gradient bound on positive-time slabs]\label{prop:grad_slab_bound}
Assume \ref{ass:rigidity}. Let $(v,w)$ be the Markovian solution fixed in Assumption~\ref{ass:backward} and set $u := \nabla_x v$. Then, for every $\varepsilon\in\left(0,T/2\right)$,
\begin{equation}\label{eq:grad_slab_bound}
\sup_{(t,x)\in [\varepsilon,T-\varepsilon]\times \R^d}
\abs{\nabla_x u(t,x)} < \infty.
\end{equation}
\end{prop}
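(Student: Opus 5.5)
The plan is to obtain \eqref{eq:grad_slab_bound} from interior parabolic estimates that are uniform with respect to the spatial centre. The key point is that every constant should depend only on structural data and on the global bound for $u$ from Proposition~\ref{prop:u_bound}, not on the location of the cylinder. Fix $\varepsilon\in(0,T/2)$ and a centre $(t_0,x_0)\in[\varepsilon,T-\varepsilon]\times\R^d$. Consider the unit-scale cylinders
\begin{equation*}
Q_1=(t_0-\varepsilon/2,\,t_0+\varepsilon/2)\times B_1(x_0),
\qquad
Q_2=(t_0-3\varepsilon/4,\,t_0+3\varepsilon/4)\times B_2(x_0).
\end{equation*}
Because the equations are backward in time, the relevant interior estimates use the cylinder extending towards $T$; to be safe I take the cylinders symmetric in time. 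Since $t_0\pm 3\varepsilon/4\in(\varepsilon/4,T-\varepsilon/4)$, they stay inside $(0,T)\times\R^d$. On $Q_2$, Proposition~\ref{prop:extra_reg} gives that $u$ is a classical solution of the linear system \eqref{eq:PDE_u}. I rewrite this system as a non-divergence system
\begin{equation*}
\partial_t u_k+\sum_{i,j}A_{ij}\partial_{ij}u_k=F_k .
\end{equation*}
All first- and zero-order terms, including the coupling term $(\partial_kA_{ij})\partial_iu_j$, go into the right-hand side $F_k$.

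For the estimate itself I would apply interior $W^{1,2}_p$ (Calderón--Zygmund) estimates to each scalar component, with $p>d+2$. The leading coefficient $A$ is uniformly elliptic and globally Lipschitz (Remark~\ref{rem:forward_roles}), so its modulus of continuity is uniform in the centre. Hence the constant in
\begin{equation*}
\norm{u_k}_{W^{1,2}_p(Q_1)}\le C\left(\norm{u_k}_{L^p(Q_2)}+\norm{F_k}_{L^p(Q_2)}\right)
\end{equation*}
is independent of $(t_0,x_0)$. Next I bound $F_k$. The terms $\partial_kb$, $\partial_kA$, $\partial_kf\sigma$, $\partial_yf$ and $\nabla_zf$ are bounded: this uses \ref{it:P1}, Assumption~\ref{ass:flow} and \ref{it:U2}. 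Also $\abs{\partial_{x_k}f(t,x,v,u\sigma)}\le C_f\norm{\sigma}_\infty\abs{u}$ by \ref{it:U3}. Therefore
\begin{equation*}
\abs{F}\le C\left(\abs{u}+\abs{\nabla_xu}\right)
\end{equation*}
on $Q_2$, with a structural constant.

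The obstacle is that $F$ involves $\nabla_x u$, which is the very quantity we want to bound. I would handle this by absorption. First use the interpolation inequality
\begin{equation*}
\norm{\nabla_x u}_{L^p}\le \delta\norm{\nabla_x^2u}_{L^p}+C_\delta\norm{u}_{L^p}
\end{equation*}
on a nested family of cylinders $Q_\rho$, $1\le\rho\le2$. Then run the standard iteration lemma, using that the weights $(\rho'-\rho)^{-k}$ absorb the local $W^{1,2}_p$ seminorm. This yields
\begin{equation*}
\norm{u}_{W^{1,2}_p(Q_1)}\le C\norm{u}_{L^\infty(Q_2)}\le C\,e^{CT/2}\norm{\nabla_xg}_\infty ,
\end{equation*}
where the last step uses \eqref{eq:u_bound}. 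An alternative is to start from the gradient-only estimate for $\nabla_xu$ in terms of $\sup\abs{u}$ and $\sup\abs{F}$ on a smaller scale, and absorb by choosing the scale small. Either route needs care, and I expect this step to be the main technical point. Finally, since $p>d+2$, the parabolic Sobolev embedding $W^{1,2}_p(Q_1)\hookrightarrow C^{1+\beta/2,\,1+\beta}$ bounds $\sup_{Q_{1/2}}\abs{\nabla_xu}$ by the same centre-independent constant. Taking the supremum over $(t_0,x_0)$ gives \eqref{eq:grad_slab_bound}.

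A second option, closer to the paper's probabilistic style, would combine Step~3 of Proposition~\ref{prop:u_bound} with a Bismut--Elworthy type representation of $\nabla_xu$. I expect that route to need more regularity of $\sigma$ than is assumed, so I would pursue the PDE argument above.
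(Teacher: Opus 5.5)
Your proposal is correct and follows essentially the paper's route: interior $W^{2,1}_p$ estimates with $p>d+2$ on spatially translated cylinders whose constants do not depend on the centre (by uniform ellipticity and global Lipschitz continuity of $A$ and the global bounds on the lower-order coefficients), fed by the global bound on $u$ from Proposition~\ref{prop:u_bound}, followed by the parabolic Sobolev embedding and a covering of the slab. The one difference is that the paper keeps the bounded first-order terms, including the coupling $(\partial_kA_{ij})\partial_iu_j$, inside the operator and cites Solonnikov's interior estimate for such systems, so its right-hand side is only $-\nabla_xf(\cdot,\cdot,v,u\sigma)$, bounded by $C_f\norm{\sigma}_\infty\sup\abs{u}$; your interpolation-and-absorption step, which you single out as the main technical point, is just the standard proof that such estimates tolerate bounded lower-order terms, and it goes through as you describe.
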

 
\begin{proof}
We prove \eqref{eq:grad_slab_bound} in four steps.
 
\medskip
\noindent\emph{Step 1: Reduction to a linear system with bounded data.}
 
Set $M_0:=\sup_{(t,x)\in[0,T)\times\R^d}\abs{u(t,x)}$, which is finite by
Proposition~\ref{prop:u_bound}. By Proposition~\ref{prop:extra_reg},
$u\in C^{1+\alpha/2,2+\alpha}_{\loc}([0,T)\times\R^d;\R^d)$ and solves
\eqref{eq:PDE_u} classically on $[0,T)\times\R^d$.
 
For each $k$, the second-order part of \eqref{eq:PDE_u} is
$\sum_{i,j=1}^d A_{ij}\partial_{ij}u_k$, the same scalar operator on
every component. The principal part is therefore diagonal, and the
components couple only at lower order, the sole first-order inter-component coupling
being the term $\sum_{i,j}(\partial_k A_{ij})\partial_i u_j$. By Remark \ref{rem:forward_roles}, the matrix
$A$ is bounded, uniformly elliptic and globally Lipschitz on
$[0,T]\times\R^d$ --- hence uniformly continuous.
 
Each first- and zeroth-order coefficient of \eqref{eq:PDE_u} is bounded
on $[0,T)\times\R^d$: those formed from $b,\sigma,\nabla_x b,\nabla_x\sigma$
by \ref{ass:parabolic}\ref{it:P1} and Assumption \ref{ass:flow}, and
$\partial_y f(t,x,v,u\sigma)$, $\nabla_z f(t,x,v,u\sigma)$ by the bound
$\abs{\partial_y f}+\abs{\nabla_z f}\le C_f$ of
Assumption~\ref{ass:rigidity}\ref{it:U2}, valid at every value of $y$.
The right-hand side of \eqref{eq:PDE_u} is
$F:=-\nabla_x f(\cdot,\cdot,v,u\sigma)$. By the structural identity
$\nabla_x f(t,x,v,0)=0$ and the $z$-Lipschitz bound of $\nabla_x f$
(Assumption~\ref{ass:rigidity}\ref{it:U3}),
\begin{equation}\label{eq:source_bound}
\abs{F}
=\abs{\nabla_x f(t,x,v,u\sigma)-\nabla_x f(t,x,v,0)}
\le C_f\abs{u\sigma}
\le C_f\norm{\sigma}_\infty M_0,
\end{equation}
using $\abs{u}\le M_0$. Since $u\in C^{1+\alpha/2,2+\alpha}_{\loc}$, it
lies in the parabolic Sobolev space
$W^{2,1,p}_{\loc}([0,T)\times\R^d;\R^d)$, i.e.\ $\partial_t
u,\nabla_x u,\nabla_x^2 u\in L^p_{\loc}$, for every $p\in[1,\infty)$, and
is a strong solution of the linear, uniformly parabolic system
\eqref{eq:PDE_u}, with diagonal scalar principal part, bounded
lower-order coefficients, and right-hand side $F$ bounded as in
\eqref{eq:source_bound}.
 
\medskip
\noindent\emph{Step 2: Interior $L^p$ estimate on translated cylinders.}
 
Fix $p:=d+3$ and $\varepsilon\in\left(0,T/2\right)$. For $x_*\in\R^d$, set
the open cylinders
\begin{equation*}
Q'(x_*):=(\varepsilon,T-\varepsilon)\times B_1(x_*),
\qquad
Q(x_*):=\bigl(\tfrac{\varepsilon}{2},T-\tfrac{\varepsilon}{2}\bigr)\times B_2(x_*),
\end{equation*}
so that $Q'(x_*)\Subset Q(x_*)$ and $\overline{Q(x_*)}\subset[0,T)\times\R^d$.
By Step~1, $u\in C^{1+\alpha/2,2+\alpha}(\overline{Q(x_*)})$, hence
$u\in W^{2,1,p}(Q(x_*);\R^d)$, and $u$ solves on $Q(x_*)$ the system
\eqref{eq:PDE_u}, which is uniformly parabolic with diagonal scalar
principal part and bounded lower-order coefficients. The interior
$W^{2,1,p}$ estimate for such systems (Solonnikov~\cite[Theorem~5.7]{Solonnikov}) provides a constant
$C_\varepsilon>0$ such that
\begin{equation}\label{eq:Lp_estimate}
\norm{u}_{W^{2,1,p}(Q'(x_*))}
\le
C_\varepsilon\bigl(\norm{F}_{L^p(Q(x_*))}+\norm{u}_{L^p(Q(x_*))}\bigr).
\end{equation}
The constant $C_\varepsilon$ depends only on $d$, $p$, the ellipticity
constant and modulus of continuity of $A$, the bounds on the lower-order
coefficients, and the parabolic distance from $Q'(x_*)$ to
$\partial Q(x_*)$. These cylinders
are spatial translates of $Q'(0)$ and $Q(0)$, so their sizes and
parabolic separation are independent of $x_*$, and the remaining
quantities are global by Step~1. Hence $C_\varepsilon$ is independent of
$x_*$.
 
By the bounds $\abs{F}\le C_f\norm{\sigma}_\infty M_0$
(see~\eqref{eq:source_bound}) and $\abs{u}\le M_0$ of Step~1, and since
$\abs{Q(x_*)}=(T-\varepsilon)\abs{B_2}$ is independent of $x_*$,
\begin{equation*}
\norm{F}_{L^p(Q(x_*))}+\norm{u}_{L^p(Q(x_*))}
\le
\left(C_f\norm{\sigma}_\infty+1\right)M_0\,
\bigl((T-\varepsilon)\abs{B_2}\bigr)^{1/p}.
\end{equation*}
Together with \eqref{eq:Lp_estimate}, this yields
\begin{equation}\label{eq:Lp_uniform}
\sup_{x_*\in\R^d}\norm{u}_{W^{2,1,p}(Q'(x_*))}
\le
C_\varepsilon\left(C_f\norm{\sigma}_\infty+1\right)M_0
\bigl((T-\varepsilon)\abs{B_2}\bigr)^{1/p}
<\infty,
\end{equation}
where $\abs{B_2}$ is the volume of a ball of radius $2$ in $\R^d$.
 
\medskip
\noindent\emph{Step 3: Parabolic Sobolev embedding.}
 
Set $\beta:=1-(d+2)/p=\tfrac{1}{d+3}\in(0,1)$. As $p=d+3>d+2$, the
parabolic Sobolev embedding (\cite[Ch.~II, Lemma~3.3]{LadSolUra67}),
applied componentwise, provides a constant $C_S$ such that
\begin{equation}\label{eq:embedding}
\sup_{\overline{Q'(x_*)}}\abs{\nabla_x u}
\le
\norm{u}_{C^{(1+\beta)/2,\,1+\beta}(\overline{Q'(x_*)})}
\le
C_S\,\norm{u}_{W^{2,1,p}(Q'(x_*))},
\qquad x_*\in\R^d,
\end{equation}
the first inequality because that norm dominates $\sup\abs{\nabla_x u}$.
The constant $C_S$ depends only on $d$, $p$, and the geometry of
$Q'(x_*)$, hence not on $x_*$, the cylinders being translates of $Q'(0)$.
Taking the supremum over $x_*$ in \eqref{eq:embedding} and using
\eqref{eq:Lp_uniform},
\begin{equation}\label{eq:grad_uniform_cyl}
\sup_{x_*\in\R^d}\ \sup_{\overline{Q'(x_*)}}\abs{\nabla_x u}
\le
C_S\,\sup_{x_*\in\R^d}\norm{u}_{W^{2,1,p}(Q'(x_*))}
<\infty.
\end{equation}
 
\medskip
\noindent\emph{Step 4: Covering and conclusion.}
 
Every point $(t,x)\in[\varepsilon,T-\varepsilon]\times\R^d$ lies in
$\overline{Q'(x)}=[\varepsilon,T-\varepsilon]\times\overline{B_1(x)}$,
since $t\in[\varepsilon,T-\varepsilon]$ and $x\in\overline{B_1(x)}$.
Therefore
\begin{equation*}
\sup_{(t,x)\in[\varepsilon,T-\varepsilon]\times\R^d}\abs{\nabla_x u(t,x)}
\le
\sup_{x_*\in\R^d}\ \sup_{\overline{Q'(x_*)}}\abs{\nabla_x u}
<\infty,
\end{equation*}
the last inequality by \eqref{eq:grad_uniform_cyl}. This is
\eqref{eq:grad_slab_bound}. As $\varepsilon\in(0,T/2)$ was arbitrary, the
bound holds for every such $\varepsilon$.
\end{proof}

\begin{cor}[Uniform bounds on positive-time slabs]\label{cor:slab_bounds}
Under the assumptions of Proposition~\ref{prop:grad_slab_bound}, for every
\(\varepsilon\in\left(0,T/2\right)\),
\begin{equation}\label{eq:combined_slab_bound}
\sup_{(t,x)\in [\varepsilon,T-\varepsilon]\times \R^d}
\left( \abs{u(t,x)} + \abs{\nabla_x u(t,x)} \right) < \infty.
\end{equation}
\end{cor}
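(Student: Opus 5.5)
The corollary follows by combining the two uniform estimates just established, so the plan is simply to add them. Fix \(\varepsilon\in(0,T/2)\). First, Proposition~\ref{prop:u_bound} gives, on all of \([0,T)\times\R^d\),
\begin{equation*}
\abs{u(t,x)}\le e^{CT/2}\norm{\nabla_x g}_\infty,
\end{equation*}
and the right-hand side is finite by Assumption~\ref{ass:rigidity}\ref{it:U1}. Since \([\varepsilon,T-\varepsilon]\times\R^d\subset[0,T)\times\R^d\), the same bound holds on the slab, so the supremum of \(\abs{u}\) there is finite.

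Second, Proposition~\ref{prop:grad_slab_bound}, whose hypotheses are exactly those assumed here, yields for the same \(\varepsilon\)
\begin{equation*}
\sup_{[\varepsilon,T-\varepsilon]\times\R^d}\abs{\nabla_x u}<\infty.
\end{equation*}

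The supremum of a sum of two nonnegative functions is at most the sum of their suprema. Adding the two bounds therefore gives \eqref{eq:combined_slab_bound}, with a constant depending on \(\varepsilon\) only through the constant \(C_\varepsilon\) of Proposition~\ref{prop:grad_slab_bound}.

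There is no genuine obstacle: all the analytic work has already been done in Propositions~\ref{prop:u_bound} and \ref{prop:grad_slab_bound}. The only point to check is that \(u\) and \(\nabla_x u\) are defined pointwise on the slab. This holds because \(u\in C^{1+\alpha/2,2+\alpha}_{\loc}\) on \([0,T)\times\R^d\) by Proposition~\ref{prop:extra_reg}, so the suprema are taken over genuine continuous functions rather than essential suprema.
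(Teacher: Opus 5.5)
Your proposal is correct and is exactly the paper's proof: restrict the global bound on $\abs{u}$ from Proposition~\ref{prop:u_bound} to the slab, take the slab bound on $\abs{\nabla_x u}$ from Proposition~\ref{prop:grad_slab_bound}, and add the two. One side remark is slightly loose but plays no role in the argument: the resulting constant depends on $\varepsilon$ not only through $C_\varepsilon$, but also through the embedding constant $C_S$ and the cylinder volume in that proposition's proof.
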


\begin{proof}
By Proposition~\ref{prop:u_bound}, \(\sup_{[0,T)\times \R^d}\abs{u}<\infty\),
hence \(\sup_{[\varepsilon,T-\varepsilon]\times \R^d}\abs{u}<\infty\). By
Proposition~\ref{prop:grad_slab_bound},
\(\sup_{[\varepsilon,T-\varepsilon]\times \R^d}\abs{\nabla_x u}<\infty\).
Adding the two bounds gives \eqref{eq:combined_slab_bound}.
\end{proof}

\subsection{Malliavin representation}\label{sec:malliavin_rep}

At this point the global estimate of Proposition~\ref{prop:u_bound} is
available, so the Malliavin factorization announced at the beginning of
Section~\ref{sec:regularity} can be justified under the full rigidity
assumptions.

\begin{lem}[Malliavin representation under the rigidity assumptions]\label{lem:malliavin_rep}
Assume \ref{ass:rigidity}. Then, for every $t\in[0,T)$,
$Y_t^{0,x_0}\in\Dcal^{1,2}$ and
\begin{equation}\label{eq:DY_formula}
D_rY_t^{0,x_0}
=
\nabla_xv(t,X_t^{0,x_0})D_rX_t^{0,x_0}\mathbf 1_{[0,t]}(r),
\qquad r\in[0,T].
\end{equation}
\end{lem}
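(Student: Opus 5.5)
The plan is to apply the Malliavin chain rule for globally Lipschitz (here $C^1$ with bounded gradient) functions to the composition $Y_t^{0,x_0}=v(t,X_t^{0,x_0})$. Fix $t\in[0,T)$. For $t=0$ the statement is trivial, since $Y_0$ is deterministic and $D_rX_0=0$. For $t\in(0,T)$ we use the Markovian representation $Y_t^{0,x_0}=v(t,X_t^{0,x_0})$ from Definition~\ref{defn:markov_sol}, which holds almost surely.

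We then record the properties of the two factors.
\begin{enumerate}
\item The spatial map $x\mapsto v(t,x)$ is $C^1$ by Proposition~\ref{prop:reg}(a).
\item Its gradient $u(t,\cdot)=\nabla_x v(t,\cdot)$ is bounded on $\R^d$ by Proposition~\ref{prop:u_bound}, with $\abs{u(t,x)}\le e^{C(T-t)/2}\norm{\nabla_x g}_\infty$. Hence $v(t,\cdot)$ is globally Lipschitz and of linear growth.
\item Under Assumption~\ref{ass:flow}, the coefficients $b,\sigma$ are $C^1$ in $x$ with bounded derivatives. The classical theory (Nualart, Theorem~2.2.1) therefore gives $X_t^{0,x_0}\in(\Dcal^{1,2})^d$, with $D_rX_t=0$ for $r>t$ and, for $r\le t$, $D_rX_t=J_{0,t}J_{0,r}^{-1}\sigma(r,X_r)$ by Lemma~\ref{lem:jacobian_flow}. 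It also gives $\sup_{r}\E\abs{D_rX_t}^2<\infty$.
\end{enumerate}

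With these in hand, the chain rule for a $C^1$ function $\varphi$ with bounded gradient composed with an element of $(\Dcal^{1,2})^d$ applies directly. It yields $Y_t\in\Dcal^{1,2}$ and $D_rY_t=\nabla\varphi(X_t)D_rX_t$, where $\varphi=v(t,\cdot)$. The adaptedness indicator $\mathbf 1_{[0,t]}(r)$ comes from $D_rX_t=0$ for $r>t$. Square integrability of $DY_t$ in $L^2(\Omega\times[0,T])$ follows from $\abs{D_rY_t}\le\norm{u(t,\cdot)}_\infty\abs{D_rX_t}$.

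The only real obstacle is ensuring the chain rule is legitimately applicable. That is, $v(t,\cdot)$ must be globally Lipschitz rather than merely locally $C^1$, since $g$ and $v$ may grow polynomially. This is exactly what Proposition~\ref{prop:u_bound} supplies.

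If one preferred to avoid a global Lipschitz hypothesis, the fallback would be to localize with cutoffs $\chi_R(X_t)$ and use the closability of $D$. Given Proposition~\ref{prop:u_bound}, this is unnecessary, and the $C^1$ version of the chain rule (rather than the Lipschitz version, which only gives some bounded random vector) identifies the gradient precisely as $\nabla_x v(t,X_t)$.
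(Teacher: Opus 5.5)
Your proposal is correct and follows the paper's proof: both take $X_t^{0,x_0}\in\Dcal^{1,2}$ from Nualart's Theorem~2.2.1, the $C^1$ regularity of $v(t,\cdot)$ from Proposition~\ref{prop:reg}(a), and the global gradient bound of Proposition~\ref{prop:u_bound}, and then apply the chain rule for $C^1$ functions with bounded gradient. Your extras (treating $t=0$ separately, the Jacobian formula for $D_rX_t$) are harmless but not needed for this lemma.
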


\begin{proof}
By Assumption~\ref{ass:flow}, $X_t^{0,x_0}\in\Dcal^{1,2}$ for every
$t\in[0,T]$. See Nualart \cite[Theorem~2.2.1]{dNualart}. Fix $t<T$.
Proposition~\ref{prop:reg}(a) gives $v(t,\cdot)\in C^1(\R^d)$, while
Proposition~\ref{prop:u_bound} yields the global bound
\begin{equation*}
\sup_{x\in\R^d}|\nabla_xv(t,x)|<\infty.
\end{equation*}
Thus $v(t,\cdot)$ is continuously differentiable with bounded gradient, and the
Malliavin chain rule \cite[Proposition~1.2.3]{dNualart} applies directly to
$Y_t^{0,x_0}=v(t,X_t^{0,x_0})$: it gives
$Y_t^{0,x_0}\in\Dcal^{1,2}$ together with \eqref{eq:DY_formula}.
\end{proof}

\section{PDE rigidity for the differentiated decoupling field}\label{sec:rigidity}

We now come to the analytic core of the argument. The purpose of this
section is to exploit the structure obtained in
Section~\ref{sec:extra_reg} for the differentiated decoupling field
\begin{equation*}
u=\nabla_x v
\end{equation*}
and to turn it into a rigidity statement for its zero set. This is the
step that replaces the one-dimensional sign arguments of the
classical BSDE density literature.
\\

Throughout the section, we write
$u=\nabla_x v$ and recall \eqref{eq:A_def}. We rely on two facts established earlier. First,
under the assumptions of Proposition~\ref{prop:extra_reg}, the field
\(u\) solves a linear parabolic system and satisfies the structural
differential inequality
\begin{equation*}
\abs{\Pc u}\leq C\left(\abs{u}+\abs{\nabla_xu}\right).
\end{equation*}
Second, under the stronger hypotheses of Assumption \ref{ass:rigidity},
both \(u\) and \(\nabla_xu\) are uniformly bounded on every positive-time
slab (Corollary~\ref{cor:slab_bounds}). The role of the present section is to show that these two pieces
of information force the following rigidity property: at any fixed positive
time, the zero set of \(u\left(t,\cdot\right)\) cannot have positive Lebesgue
measure unless \(u\) vanishes identically from that time onward.
\\

The argument is organized in three steps. We first prove a unique
continuation statement from sets of positive measure at a fixed time.
We then establish a backward uniqueness lemma on the whole space.
Finally, we combine the two results to rule out large zero sets for
\(\nabla_x v\), which is the decisive input for the density theorem.

\subsection{Unique continuation from sets of positive measure}

We begin with the unique continuation step. Classical unique continuation results for parabolic equations propagate vanishing from a spatial open set at a fixed time to the entire spatial slice. This is insufficient for our purposes: since $X_t$ admits a density with respect to Lebesgue measure under Assumption~\ref{ass:parabolic}, the relevant configuration to exclude is that the zero set of $u(t,\cdot)$ has positive Lebesgue measure, rather than that it contains an open set. The lemma
below provides exactly this: it propagates vanishing from a set of positive Lebesgue measure to the entire spatial slice at the same time.

\begin{lem}[Unique continuation from a set of positive measure]\label{lem:uc}
Let $u \colon \left[0,T\right]\times \R^d \to \R^d$ belong to $C^{1,2}\left(\left(0,T\right)\times \R^d;\R^d\right)$. With Assumption \ref{ass:parabolic} in place, suppose there exists $C>0$ such that
\begin{equation}\label{eq:uc_ass}
\abs{\Pc u(t,x)} \leq C\left( \abs{u(t,x)} + \abs{\nabla_xu(t,x)} \right),
\qquad (t,x)\in (0,T)\times \R^d.
\end{equation}
If for some $t_0\in \left(0,T\right)$ the set $\{x\in \R^d : u\left(t_0,x\right)=0\}$ has positive Lebesgue measure, then $u\left(t_0,\cdot\right)\equiv 0$ on $\R^d$.
\end{lem}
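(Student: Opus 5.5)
The plan is to argue by contradiction, reducing the statement to two known fixed-time rigidity properties of parabolic operators with Lipschitz leading coefficients. The first is a \emph{doubling inequality at a fixed time}. The second is the \emph{strong unique continuation property at a fixed time}, i.e.\ space-like strong unique continuation in the sense of Escauriaza--Fern\'andez--Vessella and Kukavica. Both are proved via parabolic Carleman estimates with Gaussian-type weights centred at $(t_0,x_0)$.

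These Carleman estimates are scalar, but they transfer to the system \eqref{eq:uc_ass} for two reasons. The principal part $\partial_t+\partial_i(A_{ij}\partial_j)$ of $\Pc$ is the same scalar operator on every component. All coupling, including the term $\sum(\partial_kA_{ij})\partial_iu_j$, appears only at lower order and is dominated by $C(\abs{u}+\abs{\nabla_x u})$. One therefore applies the scalar Carleman inequality to each $u_k$ and sums over $k$. The right-hand side $C\sum_k\int(\abs{u}^2+\abs{\nabla_xu}^2)e^{2\tau\phi}$ is then absorbed for large Carleman parameter $\tau$, exactly as in the scalar case. The hypotheses needed for this are supplied by Remark~\ref{rem:forward_roles}: $A$ is bounded, uniformly elliptic, Lipschitz in $(t,x)$, and has $\partial_tA$ bounded almost everywhere.

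\emph{Step 1 (localisation at a density point).} Let $E=\{x:u(t_0,x)=0\}$ with $\abs{E}>0$, and pick a Lebesgue density point $x_0$ of $E$. Suppose $u(t_0,\cdot)\not\equiv0$ on every ball around $x_0$; the alternative is handled in Step~3. The fixed-time doubling inequality on a cylinder around $(t_0,x_0)$ gives, for small $r\le r_0$,
\begin{equation*}
\int_{B_{2r}(x_0)}\abs{u(t_0,x)}^2\Id x\le N\int_{B_r(x_0)}\abs{u(t_0,x)}^2\Id x .
\end{equation*}
Here $N$ depends on the structural constants and on a ratio of $L^2$ norms of $u$ over a fixed parabolic cylinder and over $B_{r_0}(x_0)$ at time $t_0$. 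That ratio is finite precisely because $u(t_0,\cdot)$ does not vanish on $B_{r_0}(x_0)$. The same inequality holds around every point of $B_{r_0/2}(x_0)$, with a uniform constant.

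\emph{Step 2 (from doubling to an $A_\infty$ property).} A locally uniform doubling measure $\abs{u(t_0,\cdot)}^2\Id x$ coming from solutions of such equations satisfies a propagation-of-smallness estimate from measurable sets, in the style of Vessella and Malinnikova:
\begin{equation*}
\int_{B_r}\abs{u(t_0,\cdot)}^2\le C\Bigl(\frac{\abs{B_r}}{\abs{F}}\Bigr)^{\gamma}\int_{F}\abs{u(t_0,\cdot)}^2
\qquad\text{for measurable } F\subset B_r .
\end{equation*}
To derive it, I would first turn doubling into a Remez-type inequality. This uses the local H\"older bound for $u(t_0,\cdot)$ from Proposition~\ref{prop:extra_reg} together with a Calder\'on--Zygmund-type decomposition. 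Taking $F=E\cap B_r(x_0)$, which has positive measure, the right-hand side vanishes, so $u(t_0,\cdot)\equiv0$ on $B_r(x_0)$. This contradicts the assumption of Step~1. Hence $u(t_0,\cdot)$ vanishes on a nonempty open set. I expect this step to be the main obstacle. The doubling constant must be controlled uniformly in the centre and scale, and the passage from doubling to Remez/$A_\infty$ at a single time slice must be carried out carefully; I would first try to quote Vessella's propagation-of-smallness theorem directly, and otherwise redo it from the three-ball inequality.

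\emph{Step 3 (spreading to $\R^d$).} Let $\mathcal O$ be the interior of the zero set of $u(t_0,\cdot)$; it is nonempty by Step~2 and open by definition. To see that $\mathcal O$ is closed, take $x_1\in\partial\mathcal O$. Balls $B_\rho(y)\subset\mathcal O$ with $y\to x_1$ show that $u(t_0,\cdot)$ vanishes to infinite order at $x_1$ in the $L^2$ sense. For instance, the doubling inequality centred near $x_1$ forces $\int_{B_r(x_1)}\abs{u(t_0,\cdot)}^2=0$ for small $r$. Alternatively, one invokes the fixed-time strong unique continuation property directly. Either way $x_1\in\mathcal O$, so $\mathcal O$ is closed. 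Since $\R^d$ is connected, $\mathcal O=\R^d$, i.e.\ $u(t_0,\cdot)\equiv0$.
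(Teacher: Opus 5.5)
Steps~1 and~3 of your proposal are workable in outline, but Step~2, which carries the whole content of the lemma, has a genuine gap. A doubling inequality for $\abs{u(t_0,\cdot)}^2\Id x$, even one uniform in centres and scales, does not by itself give a Remez or $A_\infty$ inequality; there are doubling weights outside $A_\infty$. You also need a \emph{scale-invariant} estimate at the same time slice, controlling $u(t_0,\cdot)$ on $B_r$ by its own size on $B_{2r}$.

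The ingredient you propose, the local H\"older bound for $u(t_0,\cdot)$, is not scale-invariant. Suppose the vanishing order $K$ of $u(t_0,\cdot)$ at $x_0$ exceeds the available H\"older exponent, which doubling does not exclude. After rescaling $B_r(x_0)$ to $B_1$ and normalising in $L^2$, the bound degenerates like $r^{\alpha-K}$. It therefore cannot prevent the normalised functions from vanishing on most of $B_1$, and no Calder\'on--Zygmund decomposition repairs this.

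The fallbacks you mention do not help either. Vessella's propagation-of-smallness theorem concerns real-analytic functions, which $u(t_0,\cdot)$ need not be. A three-ball or two-sphere one-cylinder inequality only compares balls with balls, never a ball with a measurable set. Finally, Proposition~\ref{prop:extra_reg} is not available inside the lemma, which is stated for a general $u$.

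The missing ingredient is the fixed-time Caccioppoli inequality \cite[Equation~(2.20)]{lEscauriaza2006}:
$\norm{\nabla_x u(t_0,\cdot)}_{L^2(B_r)}\le C(\theta)r^{-1}\norm{u(t_0,\cdot)}_{L^2(B_{2r})}$.
Its point is that only the slice $t_0$ appears. Its vector-valued form follows by summing the componentwise quadratic identities before invoking \eqref{eq:uc_ass}.

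Combine it with a Poincar\'e-type bound. When $u(t_0,\cdot)$ vanishes on all but an $\varepsilon$-fraction of $B_r$, one has
$\norm{u(t_0,\cdot)}_{L^2(B_r)}\le C\varepsilon^{1/(2d)}r\norm{\nabla_x u(t_0,\cdot)}_{L^2(B_r)}$,
via \cite[Lemmas~7.16 and~7.12]{dGilbarg2001}, or the fundamental theorem of calculus if $d=1$. Near a density point this yields
$\norm{u(t_0,\cdot)}_{L^2(B_r)}\le C(\theta)\varepsilon^{1/(2d)}\norm{u(t_0,\cdot)}_{L^2(B_{2r})}$.
Against your doubling inequality this is immediately contradictory for small $\varepsilon$. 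The paper instead iterates it to get super-polynomial decay and contradicts the two-sphere one-cylinder inequality.

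Separately, in Step~3 the claim that $u(t_0,\cdot)$ vanishes to infinite order at a point $x_1\in\partial\mathcal O$ is false in general. A function may vanish on a half-space yet only to finite order at its boundary, e.g.\ $(x_1)_+^3$. So strong unique continuation cannot be invoked there. Your alternative does work: apply doubling centred at points $y\in\mathcal O$ close to $x_1$, with a radius range independent of $y$. This is essentially the paper's chain-of-balls argument.
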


\begin{proof}
The proof combines a density-point argument with quantitative local
estimates for \(u\), namely Caccioppoli-type bounds and a two-sphere
one-cylinder inequality.

We first make precise how the scalar estimates of
\cite{lEscauriaza2006} are used for the vector-valued function
\(u=(u^1,\dots,u^d)\). The operator \(\Pc\) acts componentwise, with the
same principal coefficients on every component. All the identities in
the scalar argument that are used to obtain the local energy,
frequency/doubling, and Carleman estimates are quadratic. For a
vector-valued function, one applies the corresponding identity to each
component and sums. Equivalently, in the frequency-function part of the
argument one replaces the scalar quadratic quantities by their sums over
the components. All integrations by parts remain componentwise, while
the Cauchy--Schwarz steps are simply the Cauchy--Schwarz inequality in
the Euclidean target space. Likewise, the Carleman estimate is summed
over the components before any differential inequality is invoked.

The crucial point is that the system inequality \eqref{eq:uc_ass} is used
only after this summation. Indeed, the source terms then combine as
\begin{equation}\label{eq:uc_vector_source}
\sum_{k=1}^d |\Pc u^k|^2
=
|\Pc u|^2
\leq
C^2\bigl(|u|+|\nabla_xu|\bigr)^2
\leq
2C^2\bigl(|u|^2+|\nabla_xu|^2\bigr).
\end{equation}
Consequently, the lower-order contributions are absorbed after summing,
exactly as in the scalar proof, with constants changed at most by a
factor depending on the constants already present in
\eqref{eq:uc_ass}. The resulting estimates are therefore the same
quadratic estimates with
\begin{equation*}
|u|^2=\sum_{k=1}^d|u^k|^2,
\qquad
|\nabla_xu|^2=\sum_{k=1}^d|\nabla_xu^k|^2.
\end{equation*}
This yields the vector-valued forms of
\cite[Equation~(2.20)]{lEscauriaza2006} and
\cite[Theorem~2]{lEscauriaza2006} used below. Notice in particular that
we never require, and do not claim, separate componentwise inequalities
of the form
\begin{equation*}
|\Pc u^k|\leq C\bigl(|u^k|+|\nabla_xu^k|\bigr).
\end{equation*}
By an affine change of the time variable, we may assume without loss of
generality that \(T=t_0+17\). This change rescales the coefficients of
\(\Pc\) and the constant in \eqref{eq:uc_ass} by a constant factor, so
all standing hypotheses are preserved, with constants now depending also
on \(t_0\), which is fixed throughout the proof. Let
\begin{equation*}
G  =  \{x\in \R^d : u(t_0,x)=0\}.
\end{equation*}
By assumption, \(G\) has positive Lebesgue measure. By the Lebesgue
density theorem (see \cite[Theorem~1.35]{lEvans2015}), almost every
point of \(G\) is a Lebesgue density point, that is, a point
\(\bar x\in G\) satisfying
\begin{equation*}
\lim_{r\downarrow 0}\frac{|B_r(\bar x)\cap G|}{|B_r(\bar x)|}=1.
\end{equation*}
Fix such an \(\bar x\). After translating the space variable, we may
assume without loss of generality that \(\bar x=0\), so that
\begin{equation}\label{eq:uc_density_point}
\lim_{r\downarrow 0} \frac{|B_r\cap G|}{|B_r|}=1.
\end{equation}
We divide the proof into three steps.
\\~\\
\noindent\emph{Step 1: A small-ball estimate near a density point.}
We argue by contradiction. Suppose first that
\begin{equation*}
\norm{u(t_0,\cdot)}_{L^2(B_1)}>0.
\end{equation*}
Fix \(\varepsilon>0\). By \eqref{eq:uc_density_point}, there exists
\(r\left(\varepsilon\right)\in \left(0,1/2\right)\) such that
\begin{equation}\label{eq:uc_density_estimate}
|B_r\setminus G|<\varepsilon |B_r|,
\qquad 0<r<r(\varepsilon).
\end{equation}
Since \(u\left(t_0,\cdot\right)=0\) on \(G\), we have, for every
\(r<r\left(\varepsilon\right)\),
\begin{equation*}
\norm{u(t_0,\cdot)}_{L^2(B_r)}
=
\norm{u(t_0,\cdot)}_{L^2(B_r\setminus G)}.
\end{equation*}

We now distinguish between the cases \(d\ge2\) and \(d=1\). Suppose first that
\(d\ge2\). Applying H\"older's inequality with exponents \(d\) and
\(d/\left(d-1\right)\), we obtain
\begin{equation}\label{eq:uc_holder}
\norm{u(t_0,\cdot)}_{L^2(B_r)}
\leq
C |B_r\setminus G|^{1/(2d)}
\norm{u(t_0,\cdot)}_{L^{2d/(d-1)}(B_r)}.
\end{equation}
We next estimate the \(L^{2d/\left(d-1\right)}\)-norm by the \(L^2\)-norm of the
gradient. Since \(u\left(t_0,\cdot\right)\) vanishes on a subset of \(B_r\) with
measure at least \(|B_r|/2\) when \(r<r\left(1/2\right)\), Lemma~7.16 in
Gilbarg--Trudinger \cite{dGilbarg2001} gives, for each component
\(u^k\) of \(u\),
\begin{equation*}
|u^k(t_0,x)|
\leq
C \int_{B_r} |x-y|^{1-d} |\nabla_xu^k(t_0,y)| \mathrm{d}y,
\qquad \text{for a.e. } x\in B_r.
\end{equation*}
Combining this with Lemma~7.12 in \cite{dGilbarg2001} (with $\mu=1/d$) yields
\begin{equation}\label{eq:uc_sobolev_component}
\norm{u^k(t_0,\cdot)}_{L^{2d/(d-1)}(B_r)}
\leq
C r^{1/2}
\norm{\nabla_xu^k(t_0,\cdot)}_{L^2(B_r)}.
\end{equation}
Summing over \(k=1,\dots,d\), we infer
\begin{equation}\label{eq:uc_sobolev_vector}
\norm{u(t_0,\cdot)}_{L^{2d/(d-1)}(B_r)}
\leq
C r^{1/2}
\norm{\nabla_xu(t_0,\cdot)}_{L^2(B_r)}.
\end{equation}
Using \eqref{eq:uc_density_estimate} and \(|B_r|\asymp r^d\), we therefore obtain
\begin{equation}\label{eq:uc_pre_caccioppoli_highd}
\norm{u(t_0,\cdot)}_{L^2(B_r)}
\leq
C\varepsilon^{1/(2d)}r\,
\norm{\nabla_xu(t_0,\cdot)}_{L^2(B_r)}.
\end{equation}

Suppose now that \(d=1\). Then \(B_r=(-r,r)\), and, for
\(r<r(1/2)\), the set \(G\cap B_r\) has positive measure and is therefore
nonempty. Choose \(y_r\in G\cap B_r\). Since
\(u(t_0,y_r)=0\), the fundamental theorem of calculus and
Cauchy--Schwarz give, for every \(x\in B_r\),
\begin{equation*}
|u(t_0,x)|
=
\left|\int_{y_r}^x \partial_xu(t_0,z)\,\Id z\right|
\leq
(2r)^{1/2}
\norm{\partial_xu(t_0,\cdot)}_{L^2(B_r)}.
\end{equation*}
Consequently, using again that \(u(t_0,\cdot)=0\) on \(G\) and
\eqref{eq:uc_density_estimate},
\begin{align}\label{eq:uc_pre_caccioppoli_oned}
\norm{u(t_0,\cdot)}_{L^2(B_r)}
=
\norm{u(t_0,\cdot)}_{L^2(B_r\setminus G)}
\notag
&\leq
|B_r\setminus G|^{1/2}
\norm{u(t_0,\cdot)}_{L^\infty(B_r)}
\notag\\
&\leq
C\varepsilon^{1/2}r\,
\norm{\partial_xu(t_0,\cdot)}_{L^2(B_r)}.
\end{align}
Since \(1/(2d)=1/2\) when \(d=1\), the estimates
\eqref{eq:uc_pre_caccioppoli_highd} and
\eqref{eq:uc_pre_caccioppoli_oned} can be written uniformly, for every
\(d\ge1\), as
\begin{equation}\label{eq:uc_pre_caccioppoli}
\norm{u(t_0,\cdot)}_{L^2(B_r)}
\leq
C\varepsilon^{1/(2d)}r\,
\norm{\nabla_xu(t_0,\cdot)}_{L^2(B_r)}.
\end{equation}

To relate the gradient back to \(u\) itself, we use the
vector-valued form of \cite[Equation~(2.20)]{lEscauriaza2006} justified
above by summing the componentwise quadratic estimates and then invoking
\eqref{eq:uc_vector_source}:
\begin{equation}\label{eq:uc_caccioppoli}
\norm{\nabla_xu(t_0,\cdot)}_{L^2(B_r)}
\leq
\frac{C(\theta)}{r}
\norm{u(t_0,\cdot)}_{L^2(B_{2r})},
\end{equation}
where
\begin{equation*}
\theta
=
\frac{\norm{u}_{L^2((t_0,t_0+16)\times B_4)}}{\norm{u(t_0,\cdot)}_{L^2(B_1)}}.
\end{equation*}
Notice that \(\theta\) is finite, since
\(\left[t_0,t_0+16\right]\times\overline{B_4}\) is a compact subset of
\(\left(0,T\right)\times\R^d\) on which \(u\) is continuous, and is well
defined because we are in the case
\(\norm{u\left(t_0,\cdot\right)}_{L^2\left(B_1\right)}>0\). Combining
\eqref{eq:uc_pre_caccioppoli} and \eqref{eq:uc_caccioppoli}, we obtain, for every
\(r<r\left(\varepsilon\right)\wedge r\left(1/2\right)\),
\begin{equation}\label{eq:uc_small_ball}
\norm{u(t_0,\cdot)}_{L^2(B_r)}
\leq
C(\theta)\varepsilon^{1/(2d)}
\norm{u(t_0,\cdot)}_{L^2(B_{2r})}.
\end{equation}
~\\
\noindent\emph{Step 2: Super-polynomial decay and vanishing on one ball.}
Fix \(\varepsilon>0\) sufficiently small that
\begin{equation*}
C(\theta)\varepsilon^{1/(2d)}<1.
\end{equation*}
For each \(r<r\left(\varepsilon\right)\wedge r\left(1/2\right)\), let \(k\left(r\right)\) be the unique
nonnegative integer such that
\begin{equation*}
2^{k(r)}r \leq r(\varepsilon) < 2^{k(r)+1}r.
\end{equation*}
Iterating \eqref{eq:uc_small_ball} exactly \(k\left(r\right)\) times yields
\begin{equation*}
\norm{u(t_0,\cdot)}_{L^2(B_r)}
\leq
\left(C(\theta)\varepsilon^{1/(2d)}\right)^{k(r)}
\norm{u(t_0,\cdot)}_{L^2(B_{2^{k(r)}r})}.
\end{equation*}
Since \(2^{k\left(r\right)}r\le r\left(\varepsilon\right)<1/2\), the ball \(B_{2^{k\left(r\right)}r}\)
is contained in \(B_1\), and therefore
\begin{equation}\label{eq:uc_iterated}
\norm{u(t_0,\cdot)}_{L^2(B_r)}
\leq
\left(C(\theta)\varepsilon^{1/(2d)}\right)^{k(r)}
\norm{u(t_0,\cdot)}_{L^2(B_1)}.
\end{equation}
Moreover, by the definition of \(k\left(r\right)\),
\begin{equation*}
k(r)
\geq
\log_2\left(\frac{r(\varepsilon)}{2r}\right)
=
\log_2\left(\frac{r(\varepsilon)}{r}\right)-1.
\end{equation*}
Hence, \eqref{eq:uc_iterated} implies
\begin{equation}\label{eq:uc_iterated_log}
\norm{u(t_0,\cdot)}_{L^2(B_r)}
\leq
\left(C(\theta)\varepsilon^{1/(2d)}\right)^{\log_2(r(\varepsilon)/r)-1}
\norm{u(t_0,\cdot)}_{L^2(B_1)}.
\end{equation}
We now invoke the vector-valued two-sphere one-cylinder
inequality from \cite[Theorem~2]{lEscauriaza2006}, in the form justified
above. In the present notation it gives
\begin{multline}\label{eq:uc_two_sphere}
\norm{u(t_0,\cdot)}_{L^2(B_1)}
\leq
\norm{u(t_0,\cdot)}_{L^2(B_r)}^{1/(1+C(\theta)\log_2(1/r))}
\\
\times
\left(\sqrt{C(\theta)}\norm{u}_{L^2((t_0,t_0+16)\times B_4)}\right)^{C(\theta)\log_2(1/r)/(1+C(\theta)\log_2(1/r))}.
\end{multline}
Substituting \eqref{eq:uc_iterated_log} into
\eqref{eq:uc_two_sphere}, dividing by the positive quantity
\(\norm{u\left(t_0,\cdot\right)}_{L^2\left(B_1\right)}^{1/\left(1+C\left(\theta\right)\log_2\left(1/r\right)\right)}\),
and raising both sides to the power
\(\left(1+C\left(\theta\right)\log_2\left(1/r\right)\right)/\left(C\left(\theta\right)\log_2\left(1/r\right)\right)\), we
arrive at
\begin{equation*}
\norm{u(t_0,\cdot)}_{L^2(B_1)}
\leq
\left(C(\theta)\varepsilon^{1/(2d)}\right)^{\frac{\log_2(r(\varepsilon)/r)-1}{C(\theta)\log_2(1/r)}}
\sqrt{C(\theta)}
\norm{u}_{L^2((t_0,t_0+16)\times B_4)}.
\end{equation*}
Letting \(r\downarrow 0\), the exponent on the right-hand side
converges to a strictly positive constant depending only on \(\theta\).
Since \(\varepsilon\) can then be sent to \(0\), the right-hand side
converges to \(0\). This contradicts our standing assumption that
\(\norm{u\left(t_0,\cdot\right)}_{L^2\left(B_1\right)}>0\). We conclude that
\begin{equation}\label{eq:uc_ball_one_zero}
\norm{u(t_0,\cdot)}_{L^2(B_1)}=0.
\end{equation}
~\\
\noindent\emph{Step 3: Propagation from one ball to all of \(\R^d\).}
We first record the translated propagation step that will be used
repeatedly. Fix \(y\in\R^d\) and assume that
\begin{equation*}
\norm{u(t_0,\cdot)}_{L^2(B_{1/2}(y))}=0.
\end{equation*}
If
\(\norm{u(t_0,\cdot)}_{L^2(B_1(y))}=0\), there is nothing to prove.
Otherwise, define the parameter associated with the cylinder centered at
\(y\) by
\begin{equation*}
\theta_y
=
\frac{\norm{u}_{L^2((t_0,t_0+16)\times B_4(y))}}
{\norm{u(t_0,\cdot)}_{L^2(B_1(y))}}.
\end{equation*}
This quantity is finite, by the same local boundedness argument as for
\(\theta\) above, and is well defined because its denominator is
positive. Applying the translated vector-valued two-sphere
one-cylinder inequality with center \(y\) and radius \(1/2\) gives
\begin{multline*}
\norm{u(t_0,\cdot)}_{L^2(B_1(y))}
\leq
\norm{u(t_0,\cdot)}_{L^2(B_{1/2}(y))}^{1/(1+C(\theta_y))}
\\
\times
\left(
\sqrt{C(\theta_y)}
\norm{u}_{L^2((t_0,t_0+16)\times B_4(y))}
\right)^{C(\theta_y)/(1+C(\theta_y))}
=0,
\end{multline*}
which contradicts the alternative under consideration. Hence
\begin{equation}\label{eq:uc_propagation_step}
\norm{u(t_0,\cdot)}_{L^2(B_{1/2}(y))}=0
\quad\Longrightarrow\quad
\norm{u(t_0,\cdot)}_{L^2(B_1(y))}=0.
\end{equation}
Now fix \(x_1\in\R^d\) with \(|x_1|\le1/2\). Since
\(B_{1/2}(x_1)\subset B_1\), \eqref{eq:uc_ball_one_zero} and
\eqref{eq:uc_propagation_step} yield
\begin{equation*}
\norm{u(t_0,\cdot)}_{L^2(B_1(x_1))}=0.
\end{equation*}
If \(x_2\in\R^d\) satisfies \(|x_2-x_1|\le1/2\), then
\(B_{1/2}(x_2)\subset B_1(x_1)\), so another application of
\eqref{eq:uc_propagation_step} gives
\begin{equation*}
\norm{u(t_0,\cdot)}_{L^2(B_1(x_2))}=0.
\end{equation*}
Iterating along a finite chain of overlapping unit balls --- for every
\(x\in\R^d\), one can choose centers \(x_1,\dots,x_n=x\) such that
\(\abs{x_1}\le1/2\) and \(\abs{x_{j+1}-x_j}\le1/2\) --- we obtain
\begin{equation*}
\norm{u(t_0,\cdot)}_{L^2(B_1(x))}=0
\qquad\text{for every }x\in\R^d.
\end{equation*}
Thus \(u(t_0,\cdot)=0\) almost everywhere on \(\R^d\). Since
\(u(t_0,\cdot)\) is continuous, it follows that \(u(t_0,x)=0\) for every
\(x\in\R^d\), as claimed.
\end{proof}

\subsection{Backward uniqueness}

The previous lemma is a statement at a single time slice. To make it
useful for the BSDE problem, one needs to propagate the vanishing
forward in time, up to the terminal time. This is the role of
backward uniqueness: once \(u\) is known to vanish identically at some positive
time \(t_0\), the lemma below shows that \(u\) must vanish on the
entire slab \([t_0,T)\times \R^d\).

This is also where the boundedness information from
Section~\ref{sec:extra_reg} becomes essential. The Carleman-based
backward uniqueness argument requires control of both \(u\) and
\(\nabla_xu\) on positive-time slabs, and this is precisely what Corollary~\ref{cor:slab_bounds} provides when applied to \(u=\nabla_x v\).

\begin{lem}[Backward uniqueness]\label{lem:bu}
Let $u \colon \left[0,T\right]\times \R^d \to \R^d$ satisfy
\begin{equation*}u\in C^{1,2}\left(\left(0,T\right)\times \R^d;\R^d\right).
\end{equation*}
Assume \ref{ass:parabolic}, and suppose that there exist constants
$M>0$ and $\kappa>0$ such that
\begin{equation}\label{eq:bu_ass}
\abs{u(t,x)} + \abs{\nabla_xu(t,x)} \leq M,
\qquad
\abs{\Pc u(t,x)} \leq \kappa\left( \abs{u(t,x)} + \abs{\nabla_xu(t,x)} \right),
\end{equation}
for all $\left(t,x\right)\in \left(0,T\right)\times \R^d$. If for some
$t_0\in \left(0,T\right)$ one has $u\left(t_0,\cdot\right)\equiv 0$, then $u \equiv 0$ on $\left[t_0,T\right)\times \R^d$.
\end{lem}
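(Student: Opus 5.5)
We would prove Lemma~\ref{lem:bu} by reducing it to the scalar whole-space backward uniqueness theorem of \cite{jWu2019}. The vector case is handled exactly as in the proof of Lemma~\ref{lem:uc}: every identity is quadratic and is summed over components before the system inequality is used.

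\emph{Step 1: time reversal.} We set $s:=T-t$, $s_0:=T-t_0$ and $\tilde u(s,x):=u(T-s,x)$. Then $\tilde u$ solves the forward system
\begin{equation*}
\partial_s\tilde u-\sum_{i,j}\partial_i\bigl(\tilde A_{ij}\partial_j\tilde u\bigr)=-\widetilde{\Pc u},
\qquad
\bigl|\widetilde{\Pc u}\bigr|\le\kappa\bigl(|\tilde u|+|\nabla_x\tilde u|\bigr),
\end{equation*}
with $\tilde A(s,x)=A(T-s,x)$. The claim becomes: $\tilde u(s_0,\cdot)\equiv0$ forces $\tilde u\equiv0$ on $(0,s_0]\times\R^d$. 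This is the ill-posed direction for a forward parabolic equation, which is why the bounds \eqref{eq:bu_ass} are needed. By Remark~\ref{rem:forward_roles}, $\tilde A$ is bounded, uniformly elliptic, globally Lipschitz in $x$, and Lipschitz in $s$ with $\partial_s\tilde A$ bounded almost everywhere. These are precisely the coefficient hypotheses of the scalar result.

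\emph{Step 2: weighted log-convexity, componentwise and summed.} We follow the Carleman/log-convexity scheme of \cite{jWu2019} (Lions--Malgrange/Agmon--Nirenberg type). Fix an interval $[s_1,s_0]\subset(0,s_0]$. We introduce a Gaussian-type weight $\phi(s,x)=e^{-\gamma|x|^2}$, or the time-dependent weight used there, and the quantities
\begin{equation*}
H(s)=\int_{\R^d}|\tilde u|^2\phi\,\Id x,
\qquad
D(s)=\int_{\R^d}\langle\tilde A\nabla\tilde u,\nabla\tilde u\rangle\phi\,\Id x,
\qquad
N=D/H,
\end{equation*}
where $\langle\tilde A\nabla\tilde u,\nabla\tilde u\rangle:=\sum_k\langle\tilde A\nabla\tilde u^k,\nabla\tilde u^k\rangle$. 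The bounds $|u|+|\nabla_xu|\le M$, together with interior regularity, make all these integrals finite. They also justify the integrations by parts on $\R^d$ and the differentiation under the integral sign. The time derivative $\partial_s\tilde A$ enters only inside integrals, so its almost-everywhere bound suffices. Each component is differentiated as in the scalar computation and the results are summed. Only afterwards do we insert the summed source bound $|\widetilde{\Pc u}|^2\le2\kappa^2(|\tilde u|^2+|\nabla\tilde u|^2)$, exactly as in \eqref{eq:uc_vector_source}. This yields a differential inequality
\begin{equation*}
N'(s)\le C\bigl(1+N(s)\bigr)
\end{equation*}
on any interval where $H>0$. The source terms are absorbed via Cauchy--Schwarz in $\R^d$ and Young's inequality. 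The commutator terms from $\nabla\phi$ and from the variable coefficients are controlled by $C(1+N)H$ plus weight-generated terms, which $\gamma$ and the Gaussian decay handle.

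\emph{Step 3: conclusion.} From $N'\le C(1+N)$ we get $N$ bounded on $[s_1,s_0)$ whenever $H>0$ there. Since $H'\ge-C(1+N)H$, this gives $\log H$ bounded below. Hence $H(s)>0$ at some $s_1<s_0$ forces $H(s)\ge cH(s_1)>0$ up to $s_0$, contradicting $H(s_0)=0$. Thus $H\equiv0$ on $(0,s_0]$, and continuity of $u$ gives $u\equiv0$ on $[t_0,T)\times\R^d$.

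The main obstacle is Step~2: deriving the frequency inequality on the whole space with merely bounded (not $L^2$) $u$ and $\nabla u$, first-order couplings on the right-hand side, and only Lipschitz time dependence of $A$. If the direct weighted computation becomes unmanageable, the fallback is to import the Carleman inequality of \cite{jWu2019} verbatim for each component and sum over $k$ before using \eqref{eq:bu_ass}. This mirrors how \cite{lEscauriaza2006} was vectorized in Lemma~\ref{lem:uc}.
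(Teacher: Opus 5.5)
Your Step~2 does not go through with the Gaussian weight, and this is a genuine gap rather than a technicality. In $L^2(\phi\,\Id x)$ the operator $\nabla\cdot(\tilde A\nabla)$ is not symmetric. Write
\begin{equation*}
\partial_s\tilde u=-\mathcal{A}\tilde u+G,\qquad
\mathcal{A}w:=-\phi^{-1}\nabla\cdot(\phi\tilde A\nabla w),\qquad
G:=-(\tilde A\nabla\log\phi)\cdot\nabla\tilde u-\widetilde{\Pc u}.
\end{equation*}
Only $\mathcal{A}$ has the self-adjoint structure behind the Agmon--Nirenberg computation, and $D=\langle\mathcal{A}\tilde u,\tilde u\rangle_{L^2(\phi)}$ is exactly your $D$. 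Completing the square in $D'H-DH'$ gives
\begin{equation*}
N'\le\frac{\|G\|_{L^2(\phi)}^2}{2H}+\frac1H\int_{\R^d}\partial_s\tilde A\,\nabla\tilde u\cdot\nabla\tilde u\,\phi\,\Id x ,
\end{equation*}
so everything hinges on $\|G\|^2_{L^2(\phi)}\le C(H+D)$.

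For $\phi=e^{-\gamma|x|^2}$ the weight generates the drift $2\gamma\tilde A x\cdot\nabla\tilde u$, whose coefficient grows linearly in $|x|$. Consequently $4\gamma^2\int|\tilde Ax\cdot\nabla\tilde u|^2\phi$ is not controlled by $H+D$. The Gaussian Hardy inequality
\begin{equation*}
\gamma^2\int|x|^2|\nabla\tilde u|^2\phi\le\gamma d\int|\nabla\tilde u|^2\phi+\int|\nabla^2\tilde u|^2\phi
\end{equation*}
only trades this term for a second-order one. Its coefficient ($4\Lambda^2$, with $\Lambda\ge\|A\|_\infty$) does not improve as $\gamma$ varies. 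That term has the size of $\|\mathcal{A}\tilde u\|^2_{L^2(\phi)}$, which is at least $N^2H$ by Cauchy--Schwarz. So the best this computation gives is a Riccati-type bound $N'\le C(1+N)+cN^2$. Such a bound lets $N$ blow up before $s_0$, so $\int N=\infty$ and $H(s_0)=0$ are not excluded, and Step~3 has nothing to stand on. Note also that the argument of \cite{jWu2019} used in the paper is Carleman-based, so there is no weighted frequency computation there to follow.

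The missing idea is to use an integrable weight with bounded logarithmic gradient, e.g.\ $\phi=e^{-\langle x\rangle}$. Then $|\tilde A\nabla\log\phi|\le\Lambda$, so the drift is a bounded first-order term. The summed source bound and ellipticity give $\|G\|^2_{L^2(\phi)}\le C(H+D)$. Since $A$ is Lipschitz in time, $\bigl|\int\partial_s\tilde A\nabla\tilde u\cdot\nabla\tilde u\,\phi\bigr|\le CD$. Hence $N'\le C(1+N)$ and $H'\ge-C(1+N)H$, and your Step~3 works on every $[s_1,s_0]$ with $s_1>0$. Finiteness of $H$ and $D$ follows from $|u|+|\nabla_xu|\le M$ and $\phi\in L^1(\R^d)$.

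You must still replace the differentiation of $D$ under the integral sign by an integrated (Lions--Magenes type) identity, because $u\in C^{1,2}$ gives no $\partial_s\nabla\tilde u$. The required bounds $\nabla_x^2\tilde u,\partial_s\tilde u\in L^2(\phi\,\Id x\,\Id s)$ on $[s_1,s_0]$ follow from interior $W^{2,1}_p$ estimates on unit cylinders, uniform in the centre. These apply because $\Pc u$ is bounded by \eqref{eq:bu_ass} and $[s_1,s_0]$ corresponds to times interior to $(0,T)$.

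Repaired this way, your argument is a genuinely different and more elementary proof than the paper's. It is a single log-convexity estimate in $L^2(e^{-\langle x\rangle}\Id x;\R^d)$, in which the vector case is automatic and the whole interval is covered at once. The paper instead does what you list as a fallback. It imports the scalar Carleman estimate of \cite[Proposition~1.4]{jWu2019}, sums it over components, and reruns \cite[Lemma~2.1]{jWu2019} to get vanishing on a short strip after $t_0$. It then propagates to $T$ by a supremum argument. Carleman estimates become necessary once $u$ may grow faster than exponentially in $|x|$, since then no weight with bounded logarithmic gradient makes $H$ finite. Under the boundedness assumed in the lemma, the elementary route suffices.
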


\begin{proof}
We argue by combining a local vanishing result, based on a Carleman estimate, together with a time-iteration argument, following Wu and Zhang
\cite{jWu2019}. The only point that requires clarification is the
passage from scalar-valued to vector-valued functions. In the present
setting, \(\Pc\) acts componentwise on \(u=\left(u^1,\dots,u^d\right)\), and the
relevant Carleman estimate is quadratic in the unknown. Consequently,
although the statements in \cite{jWu2019} are written for scalar
equations, the same Carleman estimate extends to
\(\R^d\)-valued test functions by applying the scalar estimate to each
component and summing. The subsequent local-vanishing argument and its
iteration in time then carry over to the present vector-valued
setting. In particular, no separate pointwise inequalities of the form
$|\Pc u^k|\le C\left(|u^k|+|\nabla_xu^k|\right)$ are needed.
\\

By the affine change of variables
\begin{equation*}
\tau = \frac{t-t_0}{T-t_0},
\qquad
\widetilde u(\tau,x) = u\left(t_0 + (T-t_0)\tau,x\right),
\end{equation*}
it is enough to prove the statement in the normalized case \(t_0=0\)
and \(T=1\). Indeed, \(\widetilde u\) belongs to
\(C^{1,2}\left(\left(0,1\right)\times \R^d;\R^d\right)\), satisfies bounds of the
same type as \eqref{eq:bu_ass}, possibly with different constants, and
the transformed coefficients still satisfy
Assumption~\ref{ass:parabolic}. We therefore work on
\(\left(0,1\right)\times \R^d\) and we continue to denote the rescaled function by $u$ and the rescaled operator by $\mathcal{P}$.
\\~\\
\noindent\emph{Step~1: Local vanishing after the initial time.}
We first show that if \(u\left(0,\cdot\right)=0\), then \(u\) vanishes on a short
time strip \(\left[0,T_1\right]\times \R^d\).
\\

By Assumption~\ref{ass:parabolic} and Remark \ref{rem:forward_roles}, there exist constants
\(0<\lambda\le \Lambda<\infty\) such that
\begin{equation*}
\lambda |\xi|^2 \le \xi^\transpose A(t,x)\xi \le \Lambda |\xi|^2,
\qquad
\xi\in \R^d,
\quad (t,x)\in [0,1]\times \R^d.
\end{equation*}
Moreover, under Assumption~\ref{ass:parabolic}, the coefficients satisfy the hypotheses of \cite[Proposition~1.4]{jWu2019} with the Wu--Zhang growth exponent \(\alpha_{\mathrm{WZ}}=1\). Indeed, the boundedness in \eqref{eq:bu_ass} implies the admissible growth bound
\begin{equation*}
|u(t,x)|\le M e^{M|x|},
\end{equation*}
and, for \(\alpha_{\mathrm{WZ}}=1\), the spatial-decay exponent required of \(\nabla_xA\) in \cite{jWu2019} is zero. Hence the global boundedness of \(\nabla_xA\), together with the ellipticity and time regularity recorded in Remark~\ref{rem:forward_roles}, is sufficient. Thus there exists a constant \(K>0\), depending only on the dimension, the ellipticity constants, and the coefficient bounds entering \cite[Proposition~1.4]{jWu2019}, such that, with
\begin{equation*}
b=(8\Lambda)^{-1},\qquad \langle x\rangle=(1+|x|^2)^{1/2},
\end{equation*}
for every scalar-valued test function
\(\phi\in C_0^\infty\left(\left(0,1\right)\times \R^d\right)\) and every
\(\gamma>0\),
\begin{multline}\label{eq:bu_carleman}
\int_{(0,1)\times \R^d}
e^{2\gamma (t^{-K}-1)-\frac{b\langle x\rangle+K}{t}}
\left( |\phi(t,x)|^2 + |\nabla_x \phi(t,x)|^2 \right)\Id x\Id t
\\
\le
\int_{(0,1)\times \R^d}
e^{2\gamma (t^{-K}-1)-\frac{b\langle x\rangle+K}{t}}
|\Pc \phi(t,x)|^2\Id x\Id t.
\end{multline}
We claim that the same estimate holds for
\(\R^d\)-valued test functions. Indeed, if 
\begin{equation*}
\varphi=(\varphi^1,\dots,\varphi^d)\in
C_0^\infty\left((0,1)\times \R^d;\R^d\right),
\end{equation*}
then applying
\eqref{eq:bu_carleman} to each component \(\varphi^k\) and summing
over \(k=1,\dots,d\) gives
\begin{multline*}
\int_{(0,1)\times \R^d}
e^{2\gamma (t^{-K}-1)-\frac{b\langle x\rangle+K}{t}}
\sum_{k=1}^d \left(|\varphi^k(t,x)|^2 + |\nabla_x\varphi^k(t,x)|^2\right)
\Id x\Id t
\\
\le
\int_{(0,1)\times \R^d}
e^{2\gamma (t^{-K}-1)-\frac{b\langle x\rangle+K}{t}}
\sum_{k=1}^d |\Pc \varphi^k(t,x)|^2\Id x\Id t.
\end{multline*}
Since
\begin{equation*}
\sum_{k=1}^d |\varphi^k|^2 = |\varphi|^2,
\quad
\sum_{k=1}^d |\nabla_x\varphi^k|^2 = |\nabla_x\varphi|^2\quad\mbox{and}\quad
\sum_{k=1}^d |\Pc\varphi^k|^2 = |\Pc\varphi|^2,
\end{equation*}
this is exactly the corresponding Carleman estimate for vector-valued
test functions.
\\

We now adapt the proof of \cite[Lemma~2.1]{jWu2019}. In the scalar
case, that lemma shows that a function satisfying a differential
inequality of the form
\begin{equation*}
|\Pc \phi|\le \widetilde\kappa\left(|\phi|+|\nabla_x\phi|\right),
\end{equation*}
together with a suitable growth condition, and vanishing at time \(0\), vanishes on a strip \(\left(0,T_1\right)\times \R^d\) for some \(T_1\in \left(0,1\right)\).
In our setting, the boundedness in \eqref{eq:bu_ass} is
stronger than the \(\alpha_{\mathrm{WZ}}=1\) growth condition used there. The spatial factor
\(e^{-b\langle x\rangle/t}\) in \eqref{eq:bu_carleman} is precisely the decay needed to pass to the limit in the spatial cutoffs in the proof of \cite[Lemma~2.1]{jWu2019}. Since that proof relies only on this Carleman estimate, the differential inequality, and the corresponding growth control, the same argument applies to the \(\R^d\)-valued function \(u\), using the vector-valued Carleman estimate established above. We therefore obtain the existence of \(T_1\in \left(0,1\right)\) such that
\begin{equation*}
u \equiv 0
\qquad \text{on } (0,T_1)\times \R^d.
\end{equation*}
Together with \(u\left(0,\cdot\right)=0\) and the continuity of \(u\), this gives
\begin{equation*}
u \equiv 0
\qquad \text{on } [0,T_1]\times \R^d.
\end{equation*}
~\\
\noindent\emph{Step~2: Propagation up to the terminal time.}
We now iterate the local vanishing result in time, exactly as in the
proof of \cite[Theorem~1.2]{jWu2019}. Define
\begin{equation*}
S
=
\left\{ s\in [0,1] : u \equiv 0 \text{ on } [0,s]\times \R^d \right\}.
\end{equation*}
By Step~1, the set \(S\) is nonempty. Let \(s_*=\sup S\). We claim
that \(s_*=1\). Suppose for contradiction that \(s_*<1\). Since \(u\)
is continuous and vanishes on \(\left[0,s\right]\times \R^d\) for every
\(s<s_*\), we have
\begin{equation*}
u(s_*,\cdot)=0 \qquad \text{on } \R^d.
\end{equation*}
Consider the shifted and rescaled function
\begin{equation*}
\widehat u(\tau,x)=u\left(s_*+(1-s_*)\tau,x\right),
\qquad
(\tau,x)\in [0,1]\times \R^d.
\end{equation*}
Exactly as before, \(\widehat u\) satisfies bounds of the form
\eqref{eq:bu_ass} on \(\left(0,1\right)\times \R^d\), possibly with different
constants, and \(\widehat u\left(0,\cdot\right)=0\). Applying Step~1 to
\(\widehat u\), we find \(\delta\in \left(0,1\right)\) such that
\begin{equation*}
\widehat u \equiv 0
\qquad \text{on } [0,\delta]\times \R^d.
\end{equation*}
Returning to the original time variable, this means that
\begin{equation*}
u \equiv 0
\qquad \text{on } [0,s_*+\delta(1-s_*)]\times \R^d,
\end{equation*}
which contradicts the definition of \(s_*\) as the supremum of \(S\).
Hence \(s_*=1\), and therefore
\begin{equation*}
u \equiv 0 \qquad \text{on } [0,1)\times \R^d.
\end{equation*}
Undoing the initial affine change of variables yields
\begin{equation*}
u \equiv 0 \qquad \text{on } [t_0,T)\times \R^d,
\end{equation*}
which proves the lemma.
\end{proof}

\subsection{Zero-set rigidity and the contradiction argument}

We now combine the two preceding lemmas to derive the rigidity
statement that forms the decisive PDE input for the density theorem.
It asserts that, at every positive time \(t\), the set of points where
\(\nabla_x v\left(t,\cdot\right)\) vanishes cannot have positive Lebesgue
measure. Combined with the absolute continuity of the law
of \(X_t\) this deterministic statement on the zero set of \(\nabla_x v\left(t,\cdot\right)\) transfers to an almost sure non-vanishing statement for \(\nabla_x v\left(t,X_t\right)\).

The argument proceeds by contradiction. If for some
\(t_0\in\left(0,T\right)\) the zero set of \(u\left(t_0,\cdot\right)\) had positive
measure, unique continuation would imply that \(u\left(t_0,\cdot\right)\)
vanishes identically. Backward uniqueness would then force
\(u\equiv 0\) on \(\left[t_0,T\right)\times\R^d\). Consequently,
\(v\left(t,\cdot\right)\) would be spatially constant for all \(t\in\left[t_0,T\right)\),
and passage to the limit as \(t\uparrow T\) would show that the
terminal condition \(g\) is constant, in contradiction with the assumed non-constancy of \(g\). The proposition below makes
this reasoning precise.

\begin{prop}[Zero sets of \(\nabla_x v\)]\label{prop:mm_inv}
Assume \ref{ass:rigidity}, and suppose that \(g\) is non-constant. Let \((v,w)\) be
the Markovian solution fixed in Assumption~\ref{ass:backward}. Then for
every \(t\in(0,T)\), the set
\begin{equation*}
\Uc_t  =  \left\{ x\in \R^d : \nabla_x v(t,x) = 0 \right\}
\end{equation*}
has Lebesgue measure zero. In particular, $\nabla_x v\left(t,X_t^{0,x_0}\right)
\neq 0$ almost surely for every $t\in \left(0,T\right)$.
\end{prop}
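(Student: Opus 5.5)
We argue by contradiction, following the outline given at the start of this subsection. Fix \(t_0\in(0,T)\) and suppose that \(\Uc_{t_0}\) has positive Lebesgue measure. By Proposition~\ref{prop:extra_reg}, \(u=\nabla_xv\) belongs to \(C^{1+\alpha/2,2+\alpha}_{\loc}([0,T)\times\R^d;\R^d)\), hence to \(C^{1,2}((0,T)\times\R^d;\R^d)\), and it satisfies \eqref{eq:struct_ineq} on \((0,T)\times\R^d\). Lemma~\ref{lem:uc} therefore applies directly and gives \(u(t_0,\cdot)\equiv0\) on \(\R^d\).

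Next we want to invoke Lemma~\ref{lem:bu}. Its bound \eqref{eq:bu_ass} is required on a full time interval, whereas Corollary~\ref{cor:slab_bounds} only controls \(\abs{u}+\abs{\nabla_xu}\) on slabs \([\varepsilon,T-\varepsilon]\times\R^d\). We therefore apply Lemma~\ref{lem:bu} with \([0,T]\) replaced by \([t_0/2,T']\) for an arbitrary \(T'\in(t_0,T)\). Choose \(\varepsilon<\min(t_0/2,T-T')\). On \((t_0/2,T')\times\R^d\), the boundedness of \(u\) and \(\nabla_xu\) then follows from Corollary~\ref{cor:slab_bounds}, and the differential inequality follows from \eqref{eq:struct_ineq}. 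Assumption~\ref{ass:parabolic} continues to hold after the restriction, and the affine time rescaling built into the proof of Lemma~\ref{lem:bu} handles an arbitrary interval. The lemma yields \(u\equiv0\) on \([t_0,T')\times\R^d\). Since \(T'<T\) was arbitrary, \(u\equiv0\) on \([t_0,T)\times\R^d\). This bookkeeping of slab endpoints is, in my view, the only delicate point, because Lemma~\ref{lem:bu} is stated on \((0,T)\) while the bounds degenerate as \(t\to T\).

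For each \(t\in[t_0,T)\), the function \(v(t,\cdot)\in C^1(\R^d)\) now has zero gradient, so \(v(t,x)=c(t)\) for all \(x\in\R^d\). Proposition~\ref{prop:reg}(a) gives \(v\in C([0,T]\times\R^d)\). Letting \(t\uparrow T\) yields \(g(x)=v(T,x)=\lim_{t\uparrow T}c(t)\), and this limit exists and is independent of \(x\) by continuity at any fixed point. Hence \(g\) is constant, contradicting the hypothesis. Thus \(\abs{\Uc_t}=0\) for every \(t\in(0,T)\).

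For the final claim, Remark~\ref{rem:forward_roles} (Aronson's bounds, already used in the proof of Proposition~\ref{prop:reg}) shows that under Assumption~\ref{ass:parabolic} the law of \(X_t^{0,x_0}\) is absolutely continuous with respect to Lebesgue measure for \(t>0\). The set \(\Uc_t\) is closed, since \(\nabla_xv(t,\cdot)\) is continuous, and it is Lebesgue-null. Therefore \(\mathbb P(X_t^{0,x_0}\in\Uc_t)=0\), that is, \(\nabla_xv(t,X_t^{0,x_0})\neq0\) almost surely.
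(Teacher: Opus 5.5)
Your proposal is correct and follows the paper's proof essentially step for step: Lemma~\ref{lem:uc} for unique continuation, Lemma~\ref{lem:bu} on a slab strictly inside $(0,T)$ where Corollary~\ref{cor:slab_bounds} supplies the bounds, exhaustion of $[t_0,T)$, continuity of $v$ up to $T$ to force $g$ constant, and Aronson's density for the almost-sure statement. The only difference is cosmetic: the paper uses the symmetric slabs $[\varepsilon,T-\varepsilon]$ with $\varepsilon<\min\{t_0,T-t_0\}$, whereas you use $[t_0/2,T']$ with $T'\uparrow T$, which is equivalent. You also note that $\Uc_t$ is closed, hence Borel, a small detail the paper leaves implicit.
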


\begin{proof}[Proof of Proposition \ref{prop:mm_inv}]
Fix \(t_0\in \left(0,T\right)\) and set \(u = \nabla_x v\). By
Proposition~\ref{prop:extra_reg}, the field \(u\) belongs to
\(C^{1,2}_{\loc}\) and satisfies \eqref{eq:struct_ineq}. Suppose, for
contradiction, that the zero set
\begin{equation*}
\Uc_{t_0} = \{x\in \R^d : u(t_0,x)=0\}
\end{equation*}
has positive Lebesgue measure. By Lemma~\ref{lem:uc}, this implies that \(u\left(t_0,\cdot\right)\equiv 0\) on \(\R^d\). We now want to apply backward uniqueness. Lemma~\ref{lem:bu}
requires the bounds \eqref{eq:bu_ass} on its whole time interval and a
vanishing time interior to that interval. The boundedness is available,
through Corollary~\ref{cor:slab_bounds}, on slabs staying strictly away
from \(0\) and \(T\). We therefore apply the lemma on such a slab
containing \(t_0\) in its interior. Fix an arbitrary
\(\varepsilon\in \left(0,\min\{t_0,T-t_0\}\right)\) and apply
Lemma~\ref{lem:bu} with the interval
\(\left[\varepsilon,T-\varepsilon\right]\) in the role of
\(\left[0,T\right]\) and with vanishing time
\(t_0\in \left(\varepsilon,T-\varepsilon\right)\). Its hypotheses hold
there: \(u\) belongs to
\(C^{1,2}\left(\left(\varepsilon,T-\varepsilon\right)\times \R^d;\R^d\right)\)
and satisfies \eqref{eq:struct_ineq} by
Proposition~\ref{prop:extra_reg}, as noted above, and, since
\(\varepsilon<\min\{t_0,T-t_0\}\le T/2\),
Corollary~\ref{cor:slab_bounds} yields
\begin{equation*}
\sup_{(t,x)\in [\varepsilon,T-\varepsilon]\times \R^d}
\left( \abs{u(t,x)} + \abs{\nabla_xu(t,x)} \right) < \infty.
\end{equation*}
Therefore, Lemma~\ref{lem:bu} gives
\begin{equation*}
u \equiv 0 \quad \text{on } [t_0,T-\varepsilon)\times \R^d.
\end{equation*}
As \(\varepsilon\in \left(0,\min\{t_0,T-t_0\}\right)\) was arbitrary, we
conclude that
\begin{equation*}
u \equiv 0 \quad \text{on } [t_0,T)\times \R^d.
\end{equation*}

At this point the contradiction is immediate. Since \(u=\nabla_x v\),
the function \(x\mapsto v\left(t,x\right)\) is constant on \(\R^d\) for each
\(t\in \left[t_0,T\right)\). Let us denote this constant by \(c\left(t\right)\). By
Proposition~\ref{prop:reg}, the function \(v\) is continuous on
\(\left[0,T\right]\times \R^d\), so letting \(t\uparrow T\) gives, for every
\(x\in \R^d\),
\begin{equation*}
g(x) = v(T,x) = \lim_{t\uparrow T} v(t,x) = \lim_{t\uparrow T} c(t).
\end{equation*}
Thus, \(g\) is constant on \(\R^d\), contradicting the assumed non-constancy of \(g\). Hence, the set \(\Uc_{t_0}\)
must have Lebesgue measure zero.
\\

Finally, Assumption \ref{ass:parabolic} implies that
the uniformly elliptic diffusion \(X_{t_0}^{0,x_0}\) admits a density
with respect to the Lebesgue measure -- see Aronson \cite{Aronson}. Since
\(|\Uc_{t_0}|=0\), it follows that
\begin{equation*}
\mathbb{P}\left( \nabla_x v(t_0,X_{t_0}^{0,x_0}) = 0 \right) = 0.
\end{equation*}
As \(t_0\in \left(0,T\right)\) was arbitrary, the proof is complete.
\end{proof}

\section{Proof of the main density theorem}\label{sec:main_proof}

We now complete the proof of the main result. Proposition~\ref{prop:mm_inv}
shows that, at every fixed positive time, the spatial gradient of the
decoupling field does not vanish at the random point \(X_t^{0,x_0}\) almost
surely. Lemma~\ref{lem:malliavin_rep} expresses the Malliavin derivative of
\(Y_t^{0,x_0}\) in terms of the same gradient. Combining these two facts with
the Bouleau--Hirsch criterion converts the rigidity result into absolute
continuity of the law of \(Y_t^{0,x_0}\), exactly as outlined in the
introduction.

\begin{proof}[Proof of Theorem \ref{thm:main}]
Fix \(t\in \left(0,T\right)\). By Proposition~\ref{prop:mm_inv},
\begin{equation}\label{eq:grad_nonzero}
\nabla_x v(t,X_t^{0,x_0}) \neq 0
\quad \text{a.s.}
\end{equation}
This is the decisive deterministic input produced by the rigidity
argument of the previous section.
We now turn to the Malliavin derivative of \(Y_t^{0,x_0}\). By
Lemma~\ref{lem:malliavin_rep}, the random variable \(Y_t^{0,x_0}\)
belongs to \(\Dcal^{1,2}\), and for every \(r\in \left[0,T\right]\),
\begin{equation*}
D_rY_t^{0,x_0}
=
\nabla_x v(t,X_t^{0,x_0})D_rX_t^{0,x_0}\mathbf 1_{[0,t]}(r).
\end{equation*}
Thus, the Malliavin derivative of \(Y_t^{0,x_0}\) factorizes through
the spatial gradient of the decoupling field evaluated along the
forward diffusion. To prove that \(Y_t^{0,x_0}\) has a density, it suffices to show that this derivative is almost surely
nonzero in \(L^2\left(\left[0,T\right]\right)\).
\\

For \(0\le r\le t\), the standard variational representation of the Malliavin derivative gives
\begin{equation*}
D_rX_t^{0,x_0}
=
J_{r,t}\sigma(r,X_r^{0,x_0})
\qquad \text{for a.e. }r\in[0,t].
\end{equation*}
By the flow property of the Jacobian and Lemma~\ref{lem:jacobian_flow},
\(J_{r,t}=J_{0,t}J_{0,r}^{-1}\), and hence
\begin{equation}\label{eq:DX_flow_rep}
D_rX_t^{0,x_0}
=
J_{0,t}J_{0,r}^{-1}\sigma(r,X_r^{0,x_0})
\qquad \text{for a.e. }r\in[0,t],
\end{equation}
where the Jacobians are evaluated along the trajectory starting from
\(x_0\). Define, for \(r\in[0,t]\),
\begin{equation*}
H_r
:=
J_{0,t}J_{0,r}^{-1}\sigma(r,X_r^{0,x_0}).
\end{equation*}
By Lemma~\ref{lem:jacobian_flow}, the processes \(r\mapsto J_{0,r}\) and
\(r\mapsto J_{0,r}^{-1}\) have continuous paths, and the continuity of
\(X\) together with Assumptions~\ref{ass:flow} and \ref{ass:parabolic}
shows that \(r\mapsto H_r\) is continuous. Moreover,
\begin{equation*}
H_t=\sigma(t,X_t^{0,x_0}).
\end{equation*}
Hence, by the uniform ellipticity assumption \ref{it:P3},
\begin{equation*}
H_tH_t^\transpose
=
\sigma(t,X_t^{0,x_0})\sigma(t,X_t^{0,x_0})^\transpose
\ge \lambda I_d.
\end{equation*}
Consequently, for almost every sample point there exists
\(\delta=\delta(\omega)\in(0,t]\) such that
\begin{equation*}
H_rH_r^\transpose\ge \frac{\lambda}{2}I_d,
\qquad r\in[t-\delta,t].
\end{equation*}
Using \eqref{eq:DX_flow_rep}, the Malliavin covariance matrix of
\(X_t^{0,x_0}\) can therefore be written as
\begin{equation*}
C_t
=
\int_0^t H_rH_r^\transpose\,\Id r,
\end{equation*}
and satisfies, pathwise,
\begin{equation}\label{eq:Ct_positive}
C_t
\ge
\frac{\lambda\delta}{2}I_d.
\end{equation}
Thus \(C_t\) is positive definite almost surely. In particular,
\begin{equation*}
\norm{DY_t^{0,x_0}}_{L^2([0,T])}^2
=
\nabla_x v(t,X_t^{0,x_0})
C_t
\nabla_x v(t,X_t^{0,x_0})^\transpose.
\end{equation*}
Combining \eqref{eq:Ct_positive} with \eqref{eq:grad_nonzero}, we obtain
\begin{equation*}
\norm{DY_t^{0,x_0}}_{L^2([0,T])} > 0
\quad \text{a.s.}
\end{equation*}
We are now exactly in the setting of the Bouleau--Hirsch criterion.
Since \(Y_t^{0,x_0}\in \Dcal^{1,2}\) and its Malliavin derivative is
almost surely nonzero in \(L^2\left(\left[0,T\right]\right)\), the law of \(Y_t^{0,x_0}\) is absolutely continuous with respect to the
Lebesgue measure on \(\R\) -- see Nualart
\cite[Theorem~2.1.2]{dNualart}. This completes the proof.
\end{proof}

\section{Applications}\label{sec:applications}

We close with two examples whose role is not to develop new
application-specific theory, but rather to show concretely how the
abstract density result applies in genuinely multidimensional
Markovian models arising in different fields. In both examples, the
state process $X_t \in \R^d$, $d\geq 2$, collects several interacting factors, while the scalar backward component
\begin{equation*}
Y_t = v\left( t,X_t \right)
\end{equation*}
represents the quantity of interest at time \(t\). Our theorem gives a structural conclusion on the law of \(Y_t\): at each strictly
positive time, this law is absolutely continuous with respect to the Lebesgue measure.

The point of the section is therefore twofold. First, it illustrates that the assumptions of Theorem~\ref{thm:main} are compatible with models that are not merely one-dimensional toy examples. Second, it shows how the density conclusion can be interpreted in context: in applications, the absence of atoms in the law of \(Y_t\) is relevant whenever one studies threshold events, quantiles, calibration targets, or uncertainty summaries derived from the intermediate-time distribution of the response $Y_t$.

\subsection{Multifactor incomplete-market valuation with volatility, rates, and credit factors}

Incomplete-market valuation is a natural source of genuinely multidimensional Markovian BSDEs. The value of a contingent claim then depends not only on tradable assets but also on nontradable risk factors, and, because the exposure to these factors cannot be replicated by trading in the available assets, the value process is characterized by a backward equation rather than by the complete-market pricing formula: discounting, running costs, and risk premia for the nontradable factors enter through the generator. See El Karoui, Peng, and Quenez \cite{KPQ} for the general BSDE viewpoint, Henderson and Liang \cite{HendersonLiang2016} for a multidimensional incomplete-market model with counterparty-risk effects, and Schwarz \cite{SchwarzMarketCompletion} for a complementary approach that restores completeness at the outset by enlarging the set of traded assets with derivatives written on the nontradable factors. We work directly with the incomplete-market BSDE and choose the simplest valuation rule that exercises the hypotheses of Theorem~\ref{thm:main}: the state dimension is arbitrary, and the state enters the generator only through a Girsanov-type kernel -- in financial terms, a market price of risk -- multiplying the martingale integrand.
\\

Concretely, let
\begin{equation*}
X_t=\left( L_t,V_t,R_t,\Lambda_t \right) \in \R^{m+k+\ell+q},
\qquad d=m+k+\ell+q \geq 2,
\end{equation*}
where $L_t=\left( L_t^1,\dots,L_t^m \right)$ denotes tradable
log-prices, $V_t \in \R^k$ collects stochastic-volatility or
convenience-yield factors, $R_t \in \R^\ell$ collects short-rate or
funding variables, and $\Lambda_t \in \R^q$ contains credit, liquidity, or counterparty-risk factors. We assume that under a reference pricing measure the full factor process solves
\begin{equation*}
\Id X_t=b\left( t,X_t \right)\Id t+\sigma\left( t,X_t \right)\Id W_t,
\qquad X_0=x_0,
\end{equation*}
with coefficients satisfying Assumption \ref{ass:parabolic}.
\\

The time-$t$ value of the claim and its sensitivity to the driving Brownian motion are the scalar backward component $Y_t$ and the martingale integrand $Z_t$ of the linear Markovian BSDE
\begin{equation*}
Y_t
=
g\left( X_T \right)
+
\int_t^T
\left(
\rho\left( s \right)Y_s
+
c\left( s \right)
+
\Theta\left( s,X_s \right)\cdot Z_s
\right)\Id s
-
\int_t^T Z_s\Id W_s,
\end{equation*}
where $\rho \colon \left[0,T\right]\to\R$ and $c \colon \left[0,T\right]\to\R$ are bounded and $\alpha/2$-H\"older continuous deterministic functions, and $\Theta \colon \left[0,T\right]\times\R^d\to\R^d$ is bounded and continuously differentiable in $x$ with bounded first spatial derivatives, both $\Theta$ and $\nabla_x\Theta$ being $\alpha/2$-H\"older regular in $t$ and $\alpha$-H\"older regular in $x$. The term $\rho\left( s \right)Y_s$ encodes discounting, $c\left( s \right)$ a running cash flow or cost, and $\Theta\left( s,X_s \right)\cdot Z_s$ a factor-dependent risk premium, with $\Theta$ playing the role of a state-dependent market price of risk. The vector $\Theta$ is the sole source of state dependence in the generator, and it enters multiplied by $Z_s$: the structural condition \eqref{eq:struct_fx_zero} therefore holds by construction, and the compatibility requirement of Theorem~\ref{thm:main} is satisfied automatically by this class of models rather than imposed on it.
\\

For the terminal condition, one may take a smooth bounded profile of the log-price and factor variables. A representative example is
\begin{equation*}
g\left( x \right)
=
\Phi\left(
\sum_{i=1}^m \omega_i x_i,
\ x_1-x_2,
\ x_{m+1},\dots,x_d
\right),
\end{equation*}
where $m\ge 2$, $\omega_1,\dots,\omega_m>0$ are normalized basket weights satisfying
\begin{equation*}
\sum_{i=1}^m\omega_i=1,
\end{equation*}
and $\Phi \in C_b^\infty\left( \R^{d-m+2} \right)$ is non-constant and has bounded first derivatives. The first argument is therefore the logarithm of the weighted geometric mean $\prod_{i=1}^m \left( e^{x_i} \right)^{\omega_i}$ of the tradable prices (a geometric basket), and the second is the log-ratio $x_1-x_2=\log\left( e^{x_1}/e^{x_2} \right)$, recording the relative performance of the first two assets. Writing $g=\Phi\circ \Pi$, where $\Pi\colon \R^d\to\R^{d-m+2}$ denotes the linear map given by the argument of $\Phi$ above, one has $\nabla_x g\left( x \right)=\Pi^{\transpose}\nabla\Phi\left( \Pi x \right)$, so that $g\in C_b^1\left( \R^d \right)$ with $\norm{\nabla_x g}_\infty\le \norm{\Pi}\norm{\nabla\Phi}_\infty<\infty$. Moreover, since the weights are positive, the linear functionals $x\mapsto \sum_{i=1}^m \omega_i x_i$ and $x\mapsto x_1-x_2$ are linearly independent, so $\Pi$ is surjective and $g$ is non-constant whenever $\Phi$ is. In words, the terminal value may depend simultaneously on the prices of several tradable assets and on additional nontradable factors, which is precisely the regime in which a multidimensional state description is natural.
\\

The next proposition shows that the abstract density theorem applies directly to this valuation problem.
\begin{prop}[Density for a multifactor valuation process]\label{prop:example_finance}
Assume, in addition to the setup above, that $g \in C_b^1\left( \R^d
\right)$ is non-constant with bounded gradient, and that conditions~\ref{it:R1}--\ref{it:R2} of Assumption~\ref{ass:extra_reg} hold for the coefficients
\(\left( b,\sigma,f \right)\), where
\begin{equation*}
f\left( t,x,y,z \right)=\rho\left( t \right)y+c\left( t \right)+\Theta\left( t,x \right)\cdot z.
\end{equation*}
Then, all assumptions of Theorem~\ref{thm:main} are
satisfied. Consequently, for every \(t\in\left(0,T\right)\), the time-\(t\)
valuation variable \(Y_t\) admits a density with respect to the Lebesgue measure on \(\R\).
\end{prop}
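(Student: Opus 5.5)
The plan is to verify the hypotheses of Theorem~\ref{thm:main}, namely Assumption~\ref{ass:rigidity} and the nested Assumptions~\ref{ass:flow}, \ref{ass:parabolic}, \ref{ass:backward} and \ref{ass:extra_reg}, together with the non-constancy of $g$. The conclusion then follows directly from Theorem~\ref{thm:main}. The forward assumptions are given, since the coefficients are assumed to satisfy Assumption~\ref{ass:parabolic}, which includes Assumption~\ref{ass:flow}. Conditions \ref{it:R1}--\ref{it:R2} are hypotheses of the proposition, and \ref{it:U1} together with non-constancy of $g$ are assumed directly. So the real work concerns the generator $f(t,x,y,z)=\rho(t)y+c(t)+\Theta(t,x)\cdot z$ and the existence of a Markovian solution.

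\textbf{Generator bounds.} First I would compute the derivatives:
\begin{equation*}
\partial_y f=\rho(t),\qquad \nabla_z f=\Theta(t,x),\qquad \nabla_x f=(\nabla_x\Theta(t,x))^{\transpose}z.
\end{equation*}
\begin{itemize}
\item Boundedness of $\rho$ and $\Theta$ gives \eqref{eq:lip_fyz} and the first part of \ref{it:U2} globally, with $C_f:=\norm{\rho}_\infty+\norm{\Theta}_\infty+\norm{\nabla_x\Theta}_\infty$.
\item The Lipschitz part of \ref{it:U2} is trivial, because $\partial_y f$ and $\nabla_z f$ do not depend on $(y,z)$.
\item Linearity of $\nabla_x f$ in $z$ gives $\nabla_x f(t,x,y,0)=0$ and $\abs{\nabla_x f(t,x,y,z)-\nabla_x f(t,x,y,\zeta)}\le\norm{\nabla_x\Theta}_\infty\abs{z-\zeta}$. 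This is \ref{it:U3}, and hence \ref{it:B3} on $\Sc$.
\item For \ref{it:B2}, $f$ is $C^1$ in $(x,y,z)$ because $\Theta$ is $C^1$ in $x$.
\item The Hölder bound \eqref{eq:holder_f_tx} on $\abs{y}+\abs{z}\le R$ follows from the $\alpha/2$-Hölder continuity of $\rho$, $c$ and $\Theta$ in $t$. In $x$, only $\Theta(t,x)\cdot z$ depends on $x$; it is Lipschitz, hence locally $\alpha$-Hölder. For $\abs{x-\xi}\ge1$ one uses boundedness of $\Theta$ instead, which gives a bound by $2\norm{\Theta}_\infty R\abs{x-\xi}^\alpha$.
\end{itemize}

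\textbf{Existence of a Markovian solution.} For the existence part of Assumption~\ref{ass:backward}, I would invoke Remark~\ref{rem:CYCZ}. The bounds above hold globally on $[0,T]\times\R^d\times\R\times\R^d$, and $g$ is continuous and bounded, so the nonlinear Feynman--Kac construction of \cite{PP2,KPQ} yields a Markovian pair $(v,w)$ with the required integrability. All the generator conditions were verified globally, so they hold on whatever strip $\Sc$ this solution determines, with no need to compute $C_Y$ or $C_Z$.

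\textbf{Main obstacle.} The only step needing care is the Markovian-solution existence. Definition~\ref{defn:markov_sol} asks for a single measurable $w$ representing $Z^{s,x}$ for every initial condition, whereas the classical construction gives $v(s,x)=Y^{s,x}_s$ and $Z$ only as a process. I would take this existence from the cited references, as Remark~\ref{rem:CYCZ} asserts. If a self-contained justification is wanted, one could define $w=\nabla_x v\,\sigma$ a posteriori, after establishing the PDE regularity of Proposition~\ref{prop:reg}. With all hypotheses verified, Theorem~\ref{thm:main} gives the density of $Y_t$ for each $t\in(0,T)$.
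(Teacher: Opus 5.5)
Your proposal is correct and follows essentially the same route as the paper. You compute $\partial_y f$, $\nabla_z f$ and $\nabla_x f$, check \ref{it:U2}--\ref{it:U3} (which give \eqref{eq:lip_fyz} and \ref{it:B3} on $\Sc$), verify the $C^1$ and H\"older requirements of \ref{it:B2} directly, take \ref{it:R1}--\ref{it:R2}, \ref{it:U1} and non-constancy of $g$ from the hypotheses, and obtain the Markovian solution from Remark~\ref{rem:CYCZ}. One small caveat on your optional aside: Proposition~\ref{prop:reg} as stated presupposes a Markovian solution, so defining $w=\nabla_x v\,\sigma$ a posteriori would mean re-running its proof with $v(s,x):=Y^{s,x}_s$ (no truncation needed), not citing the proposition itself.
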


\begin{proof}
The assumptions on the forward diffusion are part of the setup, so it
remains to verify the structural conditions on the generator. We have
$\partial_y f\left( t,x,y,z \right)=\rho\left( t \right)$, $\nabla_z
f\left( t,x,y,z \right)=\Theta\left( t,x \right)$ and
\begin{equation*}
\nabla_x f\left( t,x,y,z \right)
=
\left( \nabla_x\Theta\left( t,x \right) \right)^{\transpose} z.
\end{equation*}
Since \(\rho\) is bounded, \(\partial_y f\) is globally bounded, and
since \(\Theta\) is bounded, so is \(\nabla_z f\). Since neither depends on
\(\left(y,z\right)\), the Lipschitz condition of
Assumption~\ref{ass:rigidity}\ref{it:U2} holds trivially, and
\ref{it:U2} is verified. Moreover,
\begin{equation*}
\nabla_x f\left( t,x,y,0 \right)=0,
\end{equation*}
and the map \(z \mapsto \nabla_x f\left( t,x,y,z \right)\) is globally
Lipschitz, uniformly in \(\left(t,x,y\right)\), with Lipschitz constant
controlled by \(\norm{\nabla_x\Theta}_\infty\). This verifies
Assumption~\ref{ass:rigidity}\ref{it:U3}. A fortiori,
\eqref{eq:lip_fyz} and Assumption~\ref{ass:backward}\ref{it:B3} hold on
\(\Sc\), by \ref{it:U2} and \ref{it:U3} respectively. The H\"older regularity required in
Assumption~\ref{ass:backward}\ref{it:B2} is local in
\(\left(y,z\right)\), and therefore follows from the assumed H\"older
regularity of \(\rho\), \(c\), and \(\Theta\). Thus, the generator
assumptions are satisfied.

By hypothesis, \ref{it:R1}--\ref{it:R2} hold, and the terminal condition \(g\) is non-constant with bounded gradient. Moreover, since
\(\rho\), \(c\), and \(\Theta\) are bounded, the generator \(f\) is globally
Lipschitz in \((y,z)\) and of linear growth, and \(g\in C_b^1(\R^d)\). Hence
a Markovian solution exists by the classical results recalled in
Remark~\ref{rem:CYCZ}, and the existence clause of
Assumption~\ref{ass:backward} is satisfied. Therefore, all assumptions of
Theorem~\ref{thm:main} are fulfilled, which yields the existence of a
density for \(Y_t\) for every \(t\in(0,T)\).
\end{proof}

\begin{rem}
Two comments are in order. First, the generator is linear in $\left( y,z \right)$, so the valuation admits the classical explicit representation as a discounted conditional expectation under the Girsanov change of measure with kernel $\Theta$ (see \cite{KPQ}). Even so, for state dimension $d\ge 2$ the absolute continuity of the law of $Y_t$ is not immediate from this representation, nor is it covered by the one-dimensional criteria of \cite{AB,AKH,MPR}. It hinges on the almost-everywhere non-vanishing of $\nabla_x v\left( t,\cdot \right)$, which is precisely what the rigidity mechanism behind Theorem~\ref{thm:main} supplies. Second, from the financial point of view, the conclusion says that the marginal law of the value of the claim is absolutely continuous at each fixed time. In particular, it has no atoms. This is the relevant regularity whenever one studies quantiles or tail probabilities of the value process -- for instance, in value-at-risk or expected-shortfall calculations.
\end{rem}

The previous example comes from finance. The next one, from cardiac electrophysiology, illustrates that the theorem is not tied to a particular application area. What matters is the structural form of the Markovian BSDE and the regularity of its coefficients.

\subsection{A viscous-eikonal model for cardiac activation times}

The electrical impulse that triggers a heartbeat spreads through the heart muscle as a traveling wave, and simplified models describe the propagation through the wave's arrival time at each point -- the activation time. The activation time satisfies an eikonal equation, which prescribes the norm of its spatial gradient in terms of the local propagation speed. In cardiac tissue this speed is direction-dependent, conduction being faster along muscle fibers than across them, and a second-order term is often added as a regularization -- the viscous version of the model. Such models are used to reconstruct activation times from measurements, see Kunisch and Trautmann \cite{KunischTrautmann2022}. The eikonal--curvature description itself goes back to Keener \cite{Keener1991}. In contrast with the previous example, these models lead to a generator that is genuinely nonlinear in the gradient variable.
\\

To place this in our framework, let the state $X_t\in\R^d$, $d\geq 2$, solve
\begin{equation*}
\Id X_t=b\left( t,X_t \right)\Id t+\sigma\left( t,X_t \right)\Id W_t,
\qquad X_0=x_0,
\end{equation*}
where \(b\) and \(\sigma\) satisfy Assumption \ref{ass:parabolic}. The components of \(X_t\) collect the quantities on which the activation time depends -- a location in the tissue and tissue parameters such as the local propagation speed. The diffusion models the uncertainty in these quantities. Let $M\colon\left[0,T\right]\times\R^{d}\to\R^{d\times d}$ be a smooth, bounded, symmetric, uniformly positive definite matrix field, encoding the direction dependence of the propagation speed, let \(\beta>0\) be a constant, and let $\rho \colon \left[0,T\right]\to\R$ be bounded and $\alpha/2$-H\"older continuous. We consider the parabolic viscous-eikonal equation
\begin{equation*}
\left\{
\begin{aligned}
&\partial_t v+\Lc v+\rho(t)v
+c(t,x)
\left(
\sqrt{\beta+(\nabla_x v)M(t,x)(\nabla_x v)^{\transpose}}
-\sqrt{\beta}
\right)=0,\\
&v(T,x)=g(x),
\end{aligned}
\right.
\end{equation*}
where \(\Lc\) denotes the generator of the forward diffusion, defined in Remark~\ref{rem:struct_fx_necessary}. The function $c\colon\left[0,T\right]\times\R^d\to\left(0,\infty\right)$ is bounded and continuously differentiable in \(x\), with bounded first spatial derivatives. Both \(c\) and \(\nabla_x c\) are $\alpha/2$-H\"older regular in \(t\) and $\alpha$-H\"older regular in \(x\). The solution \(v\) is the activation-time variable of the regularized model: the square-root term is a smoothed, direction-weighted norm of the gradient, the constant \(\beta>0\) replacing the nonsmooth map \(p \mapsto \abs{p}\) by the smooth approximation
\begin{equation*}
p \mapsto \sqrt{\beta+\abs{p}^2}-\sqrt{\beta},
\end{equation*}
which removes the corner at the origin while preserving the growth for large gradients. In BSDE form, this leads to
\begin{equation*}
Y_t
=
g\left( X_T \right)
+
\int_t^T f\left( s,X_s,Y_s,Z_s \right)\Id s
-
\int_t^T Z_s\Id W_s,
\end{equation*}
with generator

\begin{equation*}
f\left( t,x,y,z \right)
=
\rho\left( t \right)y
+
c\left( t,x \right)
\left(
\sqrt{\beta+\abs{zB\left( t,x \right)^{\transpose}}^2}
-
\sqrt{\beta}
\right).
\end{equation*}
Here, $B\colon\left[0,T\right]\times\R^d\to\R^{d\times d}$ is bounded, uniformly non-degenerate, and continuously differentiable in \(x\) with bounded first spatial derivatives, both \(B\) and \(\nabla_x B\) being $\alpha/2$-H\"older regular in \(t\) and $\alpha$-H\"older regular in \(x\). Uniform non-degeneracy means that there exists \(\lambda_B>0\) such that

\begin{equation*}
\abs{zB\left( t,x \right)^{\transpose}}\ge \lambda_B\abs{z},
\qquad
(t,x,z)\in [0,T]\times\R^d\times\R^d.
\end{equation*}
The BSDE matches the viscous-eikonal equation exactly and Proposition~\ref{prop:reg}(b) gives \(Z_t=\left( \left( \nabla_x v \right)\sigma \right)\left( t,X_t \right)\). Let \(M^{1/2}\) denote the symmetric positive-definite square root of \(M\) and choose \(B=M^{1/2}\sigma^{-\transpose}\). Then, at \(z=\left( \nabla_x v \right)\sigma\),
\begin{equation*}
zB^{\transpose}
=
\left(\nabla_x v\right)\sigma\,\sigma^{-1}M^{1/2}
=
\left(\nabla_x v\right)M^{1/2},
\end{equation*}
and therefore
\begin{equation*}
\abs{zB^{\transpose}}^2
=
\left(\nabla_x v\right)M\left(\nabla_x v\right)^{\transpose}.
\end{equation*}
Thus the equation for \(v\) is precisely the semilinear equation \eqref{eq:PDE_v} associated with the BSDE. For this choice, the boundedness and uniform non-degeneracy of \(B\) follow from \ref{it:P1}, \ref{it:P3}, and the boundedness and uniform positive definiteness of \(M\), while the regularity imposed on \(B\) becomes a hypothesis on \(M\) and \(\sigma\). Thus \(Y_t=v\left( t,X_t \right)\) is the activation-time variable evaluated along the random state, and \(Z_t\) its sensitivity to the driving Brownian motion. As in the previous example, the state dependence of the generator -- through \(c\) and \(B\) -- is confined to a term that vanishes identically at \(z=0\). The structural condition \eqref{eq:struct_fx_zero} therefore holds by construction.
\\

The following proposition shows that the main theorem applies to this model as well.

\begin{prop}[Density for a viscous-eikonal activation variable]\label{prop:example_cardiac}
Under the setup above, assume that $g \in C_b^1\left( \R^d \right)$ is
non-constant with bounded gradient and that conditions~\ref{it:R1}--\ref{it:R2} of Assumption~\ref{ass:extra_reg} hold for the coefficients holds for the coefficients
\(\left( b,\sigma,f \right)\). Then, all
assumptions of Theorem~\ref{thm:main} are satisfied. Consequently, for
every \(t\in\left(0,T\right)\), the activation variable \(Y_t\) admits a density
with respect to the Lebesgue measure on \(\R\).
\end{prop}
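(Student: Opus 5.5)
The plan mirrors the proof of Proposition~\ref{prop:example_finance}. The forward coefficients satisfy Assumption~\ref{ass:parabolic} by the setup, and \ref{it:R1}--\ref{it:R2} hold by hypothesis. Likewise \(g\in C_b^1\) is non-constant, which gives \ref{it:B1} and \ref{it:U1}. What remains is to verify \ref{it:B2}, \ref{it:U2} and \ref{it:U3} for
\begin{equation*}
f(t,x,y,z)=\rho(t)y+c(t,x)\bigl(\sqrt{\beta+\abs{zB(t,x)^\transpose}^2}-\sqrt{\beta}\bigr),
\end{equation*}
and the existence of a Markovian solution. Since \ref{it:U2} and \ref{it:U3} are global, they imply \eqref{eq:lip_fyz} and \ref{it:B3} on \(\Sc\).

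For the derivatives, set \(\psi(p):=\sqrt{\beta+\abs{p}^2}\), so that \(\nabla\psi(p)=p/\psi(p)\) and \(\abs{\nabla\psi}\le1\).
\begin{itemize}
\item \(\partial_y f=\rho(t)\) is bounded and independent of \((y,z)\).
\item \(\nabla_z f=c\,\nabla\psi(zB^\transpose)B\), which is bounded by \(\norm{c}_\infty\norm{B}_\infty\). Since \(\nabla^2\psi\) is bounded by \(\beta^{-1/2}\), \(\nabla_z f\) is Lipschitz in \(z\) uniformly in \((t,x)\). This gives \ref{it:U2}.
\item For \(\nabla_x f\), write \(h(t,x,z):=\psi(zB^\transpose)-\sqrt\beta\). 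Then
\begin{equation*}
\partial_{x_k}f=(\partial_kc)\,h+c\,\nabla\psi(zB^\transpose)\cdot z(\partial_kB)^\transpose .
\end{equation*}
Both terms vanish at \(z=0\), which gives \eqref{eq:struct_fx_zero}.
\end{itemize}

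The \(z\)-Lipschitz bound \eqref{eq:struct_fx_lip} for \(\nabla_x f\) is the one step needing care. The first term is Lipschitz in \(z\) with constant \(\norm{\nabla c}_\infty\norm{B}_\infty\), because \(\psi\) is \(1\)-Lipschitz. For the second term, the map \(z\mapsto \nabla\psi(zB^\transpose)\cdot z\,(\partial_kB)^\transpose\) is a product of a bounded factor and a linearly growing one. Its \(z\)-gradient is
\begin{equation*}
\nabla^2\psi(zB^\transpose)B\,[\,z(\partial_kB)^\transpose]+\nabla\psi(zB^\transpose)(\partial_kB).
\end{equation*}
The potential trouble is the factor \(\abs{z}\) in the first piece. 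However, \(\abs{\nabla^2\psi(p)}\le C/\psi(p)\le C/\abs{p}\), and uniform non-degeneracy gives \(\abs{p}=\abs{zB^\transpose}\ge\lambda_B\abs{z}\). Hence \(\abs{\nabla^2\psi(zB^\transpose)}\,\abs{z}\le C/\lambda_B\), so the gradient is bounded uniformly in \((t,x,y,z)\) by a constant depending on \(\norm{c}_\infty\), \(\norm{\nabla c}_\infty\), \(\norm{B}_\infty\), \(\norm{\nabla B}_\infty\) and \(\lambda_B\). This is where the non-degeneracy of \(B\) enters, and it yields \ref{it:U3}.

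The Hölder continuity \eqref{eq:holder_f_tx} in \((t,x)\), locally uniformly in \((y,z)\), follows from the Hölder regularity of \(\rho\), \(c\) and \(B\) together with the smoothness of \(\psi\) on bounded sets. Continuous differentiability in \((x,y,z)\) follows because \(\psi\) is smooth and \(c\) and \(B\) are \(C^1\) in \(x\). Finally, \(f\) is globally Lipschitz in \((y,z)\) with linear growth, and it satisfies \(\abs{f(t,x,y,z)-f(t,x',y,z)}\le C\abs{z}\abs{x-x'}\) by the argument above. Moreover \(g\) is continuous and bounded, so the classical construction recalled in Remark~\ref{rem:CYCZ} provides a Markovian solution. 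All hypotheses of Assumption~\ref{ass:rigidity} then hold, and Theorem~\ref{thm:main} gives the density of \(Y_t\) for every \(t\in(0,T)\).
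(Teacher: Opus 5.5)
Your proposal is correct and follows the paper's proof essentially step by step. Like the paper, you compute $\partial_y f$, $\nabla_z f$ and $\nabla_x f$, check \ref{it:U2} via boundedness and Lipschitz continuity of $q\mapsto q/\sqrt{\beta+\abs{q}^2}$, check \ref{it:U3} via vanishing at $z=0$ plus bounded $z$-derivatives using $\abs{zB^{\transpose}}\ge\lambda_B\abs{z}$, and discharge the remaining hypotheses and the existence clause through Remark~\ref{rem:CYCZ}; your explicit bound $\abs{\nabla^2\psi(p)}\le 1/\psi(p)\le 1/\abs{p}$ just spells out the step the paper states in one line.
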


\begin{proof}
The assumptions on the forward diffusion are part of the setup, so it
remains to verify the conditions on the generator and on the terminal
datum. The generator is continuously differentiable in
\(\left( x,y,z \right)\), with
\(\partial_y f\left( t,x,y,z \right)=\rho\left( t \right)\),

\begin{equation*}
\nabla_z f\left( t,x,y,z \right)
=
c\left( t,x \right)
\frac{zB\left( t,x \right)^{\transpose}B\left( t,x \right)}
{\sqrt{\beta+\abs{zB\left( t,x \right)^{\transpose}}^2}},
\end{equation*}
and, for \(k=1,\dots,d\), suppressing the \(\left( t,x \right)\)-arguments
of \(c\) and \(B\),

\begin{equation*}
\partial_{x_k} f\left( t,x,y,z \right)
=
\left( \partial_{x_k}c \right)
\left(
\sqrt{\beta+\abs{zB^{\transpose}}^2}-\sqrt{\beta}
\right)
+
c\,
\frac{\left\langle zB^{\transpose},z\left( \partial_{x_k}B \right)^{\transpose}\right\rangle}
{\sqrt{\beta+\abs{zB^{\transpose}}^2}}.
\end{equation*}
We first verify \ref{it:U2}. Neither \(\partial_y f\) nor
\(\nabla_z f\) depends on \(y\), and \(\partial_y f\) does not depend on
\(z\). The map \(q\mapsto q/\sqrt{\beta+\abs{q}^2}\) is bounded by one
and, since \(\beta>0\), globally Lipschitz. Since
\begin{equation*}
\nabla_z f(t,x,y,z)
=
c(t,x)\,
\frac{zB(t,x)^{\transpose}}{\sqrt{\beta+\abs{zB(t,x)^{\transpose}}^2}}
B(t,x),
\end{equation*}
and \(\rho\), \(c\), and \(B\) are bounded, \(\partial_y f\) and \(\nabla_z f\) are therefore globally bounded, and \(z\mapsto \nabla_z f\left( t,x,y,z \right)\) is Lipschitz uniformly in \(\left( t,x \right)\). This verifies
\ref{it:U2}.

We next verify \ref{it:U3}. Both terms of \(\partial_{x_k}f\) vanish at
\(z=0\), giving the structural identity
\(\nabla_x f\left( t,x,y,0 \right)=0\). Their \(z\)-dependent factors
have uniformly bounded \(z\)-derivatives -- the first by
\(\norm{B}_\infty\), the second by the boundedness of \(B\) and
\(\nabla_x B\) together with the lower bound
\(\abs{zB^{\transpose}}\ge\lambda_B\abs{z}\) -- and the coefficients
\(\partial_{x_k}c\) and \(c\) are bounded, so
\(z\mapsto \nabla_x f\left( t,x,y,z \right)\) is globally Lipschitz,
uniformly in \(\left( t,x,y \right)\). This verifies \ref{it:U3}.

A fortiori, \eqref{eq:lip_fyz} and
Assumption~\ref{ass:backward}\ref{it:B3} hold on \(\Sc\), by
\ref{it:U2} and \ref{it:U3} respectively, and the H\"older condition
\eqref{eq:holder_f_tx} of \ref{it:B2} holds locally uniformly in
\(\left( y,z \right)\), by the H\"older regularity of \(\rho\), \(c\),
and \(B\) and the Lipschitz continuity of
\(q\mapsto\sqrt{\beta+\abs{q}^2}\). By hypothesis, \ref{it:R1}--\ref{it:R2} hold, and \(g\) is non-constant with bounded gradient, giving Assumption~\ref{ass:rigidity}\ref{it:U1}.
It remains to discharge the existence clause of
Assumption~\ref{ass:backward}: since \(\rho\), \(c\), and \(B\) are
bounded, the generator is globally Lipschitz in \(\left( y,z \right)\)
and of linear growth, and \(g\in C_b^1\left( \R^d \right)\), so a
Markovian solution exists by the classical results recalled in
Remark~\ref{rem:CYCZ}. Therefore, Theorem~\ref{thm:main} applies and
yields the density conclusion.
\end{proof}

\begin{rem}
Two comments are in order. First, the parameter \(\beta>0\) is what places the model within the framework of Section~\ref{sec:framework}. It turns the nonsmooth map \(p\mapsto\abs{p}\), for which the differentiability required of the generator fails, into a smooth, globally Lipschitz one. Second, the conclusion states that the marginal law of the activation variable \(Y_t\) is absolutely continuous at each fixed positive time. In particular, it has no atoms. When the uncertainty in predicted activation times is described probabilistically -- through probabilities of early or late activation, quantiles, or densities of the response rather than a single value -- this is the basic regularity that such summaries presuppose.
\end{rem}

\bibliographystyle{plain}
\bibliography{biblio}

\end{document}